\newcommand	{\QQ}{\mathbb{Q}}
\newcommand	{\XX}{\mathbb{X}}
\newcommand	{\NN}{\mathbb{N}}
\newcommand	{\DD}{\mathbb{D}}
\newcommand	{\RR}{\mathbb{R}}
\newcommand	{\PP}{\mathbb{P}}
\newcommand	{\EE}[1][NoArg]{\mathbb{E}\ifthenelse{\equal{#1}{NoArg}}{}{\left[#1\right]}}
\newtheorem	{thm}		{Theorem}[section]
\newtheorem	{lem} 	[thm]	{Lemma}
\newtheorem     {rem}           {Remark}
\newtheorem	{prop}	[thm]{Proposition}
\newtheorem	{cor}		[thm]{Corollary}
\newtheorem*{theorem8}{Theorem \ref{thm:geneticvariance}}
\newcommand{\dd}{\mathrm{d}}
\def\thickhline{%
  \noalign{\ifnum0=`}\fi\hrule \@height \thickarrayrulewidth \futurelet
   \reserved@a\@xthickhline}
\def\@xthickhline{\ifx\reserved@a\thickhline
               \vskip\doublerulesep
               \vskip-\thickarrayrulewidth
             \fi
      \ifnum0=`{\fi}}
\newlength{\thickarrayrulewidth}
\title{The gene's-eye view of quantitative genetics.}
\author{Philibert Courau, Amaury Lambert, Emmanuel Schertzer}
\definecolor{darkgreen}{RGB}{34,139,34}
\begin{document}
\hypersetup{pageanchor=false}
\begin{titlepage}
\maketitle

\begin{abstract}
Modelling the evolution of a continuous trait in a biological population is one of the oldest problems in evolutionary biology, which led to the birth of quantitative genetics. With the recent development of GWAS methods, it has become essential to link the evolution of the trait distribution to the underlying evolution of allelic frequencies at many loci, co-contributing to the trait value. The way most articles go about this is to make assumptions on the trait distribution, and use Wright's formula to model how the evolution of the trait translates on each individual locus. Here, we take a gene's-eye view of the system, starting from an explicit finite-loci model with selection, drift, recombination and mutation, in which the trait value is a direct product of the genome. We let the number of loci go to infinity under the assumption of strong recombination, and characterize the limit behavior of a given locus with a McKean-Vlasov SDE and the corresponding Fokker-Planck IPDE. In words, the selection on a typical locus depends on the mean behaviour of the other loci which can be approximated with the law of the focal locus. Results include the independence of two loci and explicit stationary distribution for allelic frequencies at a given locus (under some assumptions on the fitness function). 
\end{abstract}
\end{titlepage}
\hypersetup{pageanchor=true}
\pagenumbering{arabic}
\tableofcontents

\section{Introduction}
The purpose of this article is to present polygenic adaptation from the gene's-eye view. Polygenic adaptation refers to the response of a biological population to natural selection on a trait, which is co-determined by a large number of loci \cite{hollinger2019polygenic}. Understanding polygenic adaptation is of great importance because most biological traits relevant to fitness are complex, and therefore this form of adaptation is expected to be extremely common, in particular in short timescales. It is also of particular interest if one is to use the result of Genome-Wide Association Studies (GWAS) to detect traces of past selection \cite{Simons2018}.

The study of polygenic adaptation has a very rich history, deeply rooted into the field of quantitative genetics. The foundational paper which enabled this field was Fisher's 1918 article \cite{fisher1918correlation}. In this paper, Fisher convincingly argued that Mendelian heredity of many factors co-determining a trait was compatible with the observed laws for heredity of continuous traits. It had been known since Galton's study on human height \cite{galton1886regression} that traits tend to be normally distributed within a population. Fisher justified this from a Mendelian perspective, arguing that if the trait results from the sum of contributions from many close-to-independent loci, the trait will be normally distributed in a well-mixed population \cite{Barton2017}. Furthermore, the heritable part of the trait variance can be expressed with the sum of the variances of allelic frequencies at many loci, thus offering a compelling link between macroscopic and microscopic parameters. This view of quasi-independent genes with additive effects would prove essential in the development of models to predict the long-term evolution of the population \cite{walsh2018evolution}.

One convenient feature of the infinitesimal model is that it makes it possible to model the evolution of a trait without taking into consideration its genetic architecture. For instance, Lande's 1976 article \cite{lande1976natural} famously modelled the evolution of a normally-distributed trait under Gaussian stabilizing selection and drift, under the assumption that the genetic variance remains constant through time. Since then, many models have explored partial differential equations (PDEs) that rely on the same assumptions concerning the distribution of offspring traits relative to their parents' (see Section 1.2.3 of \cite{dekens2022quantifying} for a summary of the state of the art). We call this approach the trait's eye-view.

In this article we present an alternative approach, which we call the gene's-eye view of quantitative genetics. We will study a Stochastic Differential Equation (SDE) to represent the evolution of a large panmictic haploid biological population with $L$ biallelic loci under the forces of recombination, mutation, natural selection and genetic drift.

\subsection{Definition of the model.\label{sec:defmodel}}
To motivate the definition of our model let us first consider a single locus/$\{-1,+1\}$ alleles. When the population $N$ is large, under the joint action of mutation, selection and genetic drift, the evolution of the frequency of $+1$-allele is well approximated by the Wright-Fisher diffusion 
\begin{equation}
\label{eq:WF-11}
\dd X_t = s(X_t)X_t(1-X_t)\dd t + \overline{\Theta}(X_t)\dd t + \sqrt{X_t(1-X_t)} \dd B_t
\end{equation}
where $s$ is a frequency dependent selection term, $B$ is a Browian motion and 
$$
\overline{\Theta}(x) \ := \ \theta^{(+)}(1-x) -\theta^{(-)} x
$$
with $\theta^{(+)},\theta^{(-)}$ the rates of mutations from the $-1$ allele to the $+1$ allele and back.

\bigskip

The aim of the present article is to  consider a $L$-loci/$\{-1,+1\}$ allele model. In the following,
we will consider a diffusion analog to the classical Wright-Fisher diffusion (\ref{eq:WF-11}), but in order to provide more intuition on our continuum model, we first 
consider  a discrete population in discrete time comprised of $N$ individuals, each with $L$ genes encoded by an element of the hypercube $\square_{[L]}:=\{-1,+1\}^L$. 

The evolution of the population results from the combined effect of Selection (S), Recombination (R), Mutation (M) and random sampling. Those fundamental evolutionary forces are encoded by the following parameters.
\begin{enumerate}
\item[(S)] Let $W \ : \ \square_{[L]} \to \RR$.
\item[(R)] Let $\rho>0$ and $\nu$ be a probability measure on $\{{\mathcal I} \subset [L] : \mathcal{I} \neq \emptyset, [L]\}$.
\item[(M)] Let $\theta^{(+)},\theta^{(-)}\geq0$.
\end{enumerate}
Reproduction then occurs according to Wright-Fisher sampling. Start with $N$ genomes at time $k=0$, meaning a vector $(g_1,\dots,g_N)\in (\square_{[L]})^N$. At every generation $k>1$, each of the $N$ genomes comprising the new generation independently picks two parent genomes $\gamma_1,\gamma_2$ with probability proportional to their fitnesses \begin{equation}
    \exp\left(\frac{L}{N}W(\gamma_1)\right), \  \exp\left(\frac{L}{N}W(\gamma_2)\right) \label{eq:WLN}
\end{equation}
The function $W$ is often referred to as the log-fitness function. As we will see, the $\frac{L}{N}$ factor is important so that the strength of selection felt by one locus is of order 1.

With probability $1-\rho/N$, the new genome $g_0$ inherits the whole genome of one of its two parents $g_1$ and $g_2$ chosen uniformly at random. With probability $\rho/N$, a recombination occurs and $g_0$ inherits a mixture of the genetic material of the two parents according to $\nu$. More precisely, a subset ${\mathcal I} \subset [L]$ is sampled according to $\nu$ and $g_0$ inherits the genetic material of $g_1$ at loci ${\mathcal I}$, and the material of $g_2$ at loci ${\mathcal I}^{c}$. 

Finally, we assume
that each  generation, each locus on each allele can mutate with probability $\theta^{(+)}/N$ (resp., $\theta_{2}/N$)  from $-1$ to $+1$ (resp. $+1$ to $-1$) after reproduction.

\bigskip

Let $k\in\NN$, and define
${\bf X}^{(N)}_k = (X^{(N)}_k(\gamma))_{\gamma\in\square_{[L]}}$ 
where $X^{(N)}_k(\gamma)$
is the frequency of genotype $\gamma$ at generation $k$. The process $({\bf X}^{(N)}_k)_{k\in\NN}$ is valued in the space  of probability distributions on the hypercube $\square_{[L]}$ that we denote by $\XX^{[L]}$.
Let us now consider the large population limit ($N\to+\infty$) of the rescaled process $({\bf X}^{(N)}_{\lfloor tN\rfloor})_{t\geq0}$. A straightforward generator computation indicates that the process converges to a diffusive limit $({\bf X}_t)_{t\geq0}$ valued in $\XX^{[L]}$ which is solution to a Stochastic Differential Equation (SDE) on $\XX^{[L]}$ of the form
\begin{equation}\label{eq:intromaster}
 \dd\mathbf{X}_t = (\rho R(\mathbf{X}_t)  + \Theta(\mathbf{X}_t) + LS(\mathbf{X}_t)) \dd t
 + \Sigma(\mathbf{X}_t)\dd\mathbf{B}_t
\end{equation}
that we now explain. This equation will be the focus of this article.

\bigskip

{\bf Recombination.} 
For a subset $\mathcal{I}\subseteq [L]$ and ${\bf x}\in \XX_L$, define $\mathbf{x}^{\mathcal{I}}$ the marginal of $\mathbf{x}$ on the hypercube $\square_{\mathcal{I}}:=\{-1,+1\}^{\mathcal{I}}$. Let $\mathbf{x}^{\mathcal{I}}\otimes \mathbf{x}^{\mathcal{I}^c}$ be the product measure on $\square_{[L]}$ of $\mathbf{x}^{\mathcal{I}}$ and $\mathbf{x}^{\mathcal{I}^c}$.
The recombinator operator has been extensively studied  in the deterministic setting, see e.g., \cite{rabani1995computational,MARTINEZ2017115,Baakebaake,BaakeMut,entropy_production,evans2013mutation} and is defined as
\begin{equation}
R : \left\{
\begin{array}{ll}
 \XX^{[L]} & \to \RR^{{\square}_{[L]}}  \\ 
 {\bf x}   & \mapsto \sum\limits_{\emptyset\subsetneq \mathcal{I}\subsetneq [L]} \nu(\mathcal{I})(\mathbf{x}^{\mathcal{I}}\otimes \mathbf{x}^{\mathcal{I}^c} - \mathbf{x} )\label{def:recombinator}
\end{array}
\right.
\end{equation}
Note that up to replacing $\nu$ with $\tilde{\nu}:\mathcal{I}\mapsto \frac{\nu(\mathcal{I}) + \nu(\mathcal{I}^c)}{2}$, we can and will assume that for any $\mathcal{I}\subseteq [L]$, $\nu(\mathcal{I})=\nu(\mathcal{I}^c)$.

\bigskip

{\bf Mutation.}
The mutator is defined as 
\begin{equation*}
\Theta: \left\{
\begin{array}{ll}
    \XX^{[L]} &\longrightarrow \RR^{\square_{[L]}}  \\
    \mathbf{x} &\longmapsto |\theta|\sum\limits_{\ell\in [L]} 
    \left(\mathbf{x}^{[L]\smallsetminus\{\ell\}}\otimes \mathcal{L}_\theta -\mathbf{x}\right)
\end{array}
\right.
\end{equation*}
where $|\theta|$ is the total mutation rate per locus and $\mathcal{L}_\theta$ is the mutational law defined with
\begin{align*}
|\theta| :=& \theta^{(+)}+\theta^{(-)}    &
\mathcal{L}_\theta :=& \frac{\theta^{(-)}}{|\theta|}\delta_{-1}+\frac{\theta^{(+)}}{|\theta|}\delta_{+1}.
\end{align*}

\bigskip

{\bf Selection.}
The operator $S : \ \square_{[L]} \ \to \ \RR^{\square_{[L]}}$ is the selector defined with 
$$
S(\mathbf{x})(\gamma) :=
x(\gamma)(W(\gamma)-\mathbf{x}[W(g)])
 = \mathbf{Cov}_{\mathbf{x}}\left[W(g),\mathbbm{1}_{[g = \gamma]}\right]
$$
where $\mathbf{x}[\cdot]$ and $\mathbf{Cov}_{\mathbf{x}}[\cdot,\cdot]$ are the expectation and the covariance function for a random genotype $g$ with law $\mathbf{x}$. This can be thought of as an application of the Price equation \cite{PRICE1970} to the trait $F^\gamma(g):=\mathbbm{1}_{[g=\gamma]}$. See for instance \cite{LESSARD1997119}. The factor $L$ in front of $S$ in (\ref{eq:intromaster}) corresponds to the strength of selection, which stems from (\ref{eq:WLN}) and will be discussed in the next subsection.

\bigskip

{\bf Genetic Drift.} 
The stochastic term is the traditional multiallele Wright-Fisher diffusion term \cite{steinrucken2013explicit} and with xthe following covariance structure. 
$$
 \left<(\Sigma(\mathbf{X}_t)\dd\mathbf{B}_t)(\gamma),(\Sigma(\mathbf{X}_t)\dd\mathbf{B}_t)(\gamma')\right> = \delta_{\gamma,\gamma'} X_{t}(\gamma) -  X_{t}(\gamma)X_{t}(\gamma').
$$
More precisely, we will consider
${\bf B}\equiv (B_t(\gamma_1,\gamma_2))_{t\in[0,T];\gamma_1,\gamma_2\in\square_{[L]}}$ a Gaussian process indexed by $\square_{[L]}\times \square_{[L]}$ such that $B(\gamma_1,\gamma_2) = -B(\gamma_2,\gamma_1)$, and $B(\gamma_1,\gamma_2),B(\gamma_3,\gamma_4)$ are independent Brownian motions if $(\gamma_1,\gamma_2)\notin \{(\gamma_3,\gamma_4),(\gamma_4,\gamma_3)\}$.

Finally, let  $\mathcal{M}(\square_{[L]}\times \square_{[L]},\RR^{\square_{[L]}})$ denote the space of linear functions from $\square_{[L]}\times\square_{[L]}$ to $\RR^{\square_{[L]}}$. Then 
$$
\Sigma :  \ \XX^{[L]}  \to \   \mathcal{M}\left(\square_{[L]}\times \square_{[L]},\RR^{\square_{[L]}}\right)
$$
is defined such that 
\begin{equation}\label{def:drift}
\forall \gamma\in\square_{[L]}, \ \ \ \     (\Sigma(\mathbf{X}_t)\dd\mathbf{B}_t)(\gamma) :=
\sum\limits_{\hat{\gamma}\neq \gamma} \sqrt{X_t(\gamma) X_t(\hat{\gamma})}\dd B_t(\gamma,\hat{\gamma})
\end{equation}

\begin{rem}Existence and uniqueness of solutions to (\ref{eq:intromaster}) was obtained from the martingale problem in \cite{ethier}.
\end{rem}

\subsection{Propagation of chaos.} For any $\ell\in[L]$, define 
$$
p^\ell(\mathbf{X}_t) := \sum\limits_{\gamma\in\square_{[L]}} \mathbbm{1}_{[\gamma^\ell = +1]} X_t(\gamma)
$$ as the frequency of $+1$ allele at locus $\ell$ at time $t$.   We are interested in describing the joint evolution of the $p^\ell(X_t)$'s
together with the allelic averaged process $\mu_{\mathbf{X}_t}$, where for $\mathbf{x}\in \XX^{[L]}$
$$
\mu_{\mathbf{x}} := \frac{1}{L}\sum\limits_{\ell\in [L]} \delta_{p^\ell(\mathbf{x})}
$$
When focusing on just a few loci, the high dimensionality of the SDE (\ref{eq:WF-11}) renders the problem practically intractable. However, as $L\to+\infty$ and recombination becomes sufficiently strong to neglect correlations between loci, a mean field approximation applies where any focal locus only experiences the averaged effect of the rest of the genome, a phenomenon commonly known as the propagation of chaos. See Fig \ref{fig:propagation_chaos}.

\bigskip

Let us now describe our results in more details.
We will consider a sequence of models indexed by $L$ where the dependence in $L$ is in $$
(\rho, \nu, W, {\bf X}_0) \equiv (\rho^L,\nu^L, W^L, {\bf X}_{0}^L),$$ whereas other parameters remain constant.  Our theorem is concerned with the limit of (\ref{eq:intromaster}) as $L\to +\infty$. 
We make several assumptions on the order of the parameters.

\bigskip

The most restrictive assumption is that the log-fitness $W$
is a quadratic polynomial with bounded coefficients. More precisely,
    \begin{equation}
\forall \gamma\in\square_{[L]}, \ \ W(\gamma) := U(Z(\gamma))
\qquad;\qquad
Z(\gamma) := \frac{1}{L}\sum\limits_{\ell\in [L]} \gamma^\ell
    \label{eq:defquadraW}
    \end{equation}
where $Z(\gamma)$ is the additive trait value associated with genotype $\gamma$ and $U$ is a polynomial of order 1 or 2. In particular, if $U$ is of order 2 it can be written up to an additive constant
\begin{equation}\label{eq:U-square}
U(z) = -\kappa (z-z^*)^2
\end{equation}
for some $\kappa, z^*\in\RR$.
This assumption on the functional form for the log-fitness is classic in quantitative genetics. Take $z^*\in[-1,1]$.
The case $\kappa>0$ (resp., $\kappa<0$)
corresponds to a scenario of stabilizing (resp., disruptive selection) on a quantitative additive trait \cite{lande1976natural,Simons2018}. The assumption that the fitness within the population is mostly determined by stabilizing selection on a (multi-dimensional) highly polygenic trait is known as Fisher's geometric fitness model and has had countless applications \cite{tenaillon2014utility}. 

Let us briefly discuss the scaling. With the functional form of the log-fitness, selection acts on the additive trait $Z$. Under sufficiently strong recombination, it is reasonable to assume that selection impacts all loci similarly. Due to the additive nature of the trait, this influence is evenly distributed across all loci. Therefore, if we start with a selection of order $L$ at the trait level as in (\ref{eq:intromaster}), we expect selection to have an effect of order $1$ at the locus level. In particular, the rescaling in (\ref{eq:WLN}) ensures that the dynamics at the gene level remain well-defined and non-degenerate. 

 We will need to assume that recombination is strong as compared to selection. In order to quantify this relation, we first define some summary statistics related to the recombination measure $\nu$. For $A\subseteq [L]$, let $\nu^A$ be the marginal of $\nu$ on $A$. 
Define the recombination rate between two distinct loci $\ell_1,\ell_2\in [L]$
$$
r_{\{\ell_1,\ell_2\}} := \;\nu^{\{\ell_1,\ell_2\}}(\{\ell_1\}) + \nu^{\{\ell_1,\ell_2\}}(\{\ell_2\})
$$
We assume that $\nu$ is non-degenerate, that is, $\inf_{\ell_1\neq\ell_2} r_{\{\ell_1,\ell_2\}}>0$.
For a given locus $\ell_0\in[L]$, define 
$r_{\ell_0}^*$ the harmonic recombination at $\ell_0$ as
\begin{equation}
    \label{eq:defrell}
\frac{1}{r_{\ell_0}^*} :=  \frac{1}{L-1}
\sum\limits_{\ell_1\in [L]\smallsetminus\{\ell_0\}} \frac{1}{r_{\{\ell_0,\ell_1\}}}
\end{equation} 
and the average harmonic recombination rate along the genome as
\begin{align}
\label{eq:defr1}
\frac{1}{r^{**}} :=&
\frac{1}{L}\sum\limits_{\ell_0\in [L]} \frac{1}{r_{\ell_0}^*}
\end{align}

The next theorem states that under strong enough recombination (see conditions (\ref{ass:rhobeta1}-\ref{ass:rhoell1ell2})), a mean-field approximation applies and the $p^\ell(X_t)$'s converge to independent  McKean-Vlasov diffusions \cite{chaintron2021propagation1}
\begin{eqnarray}\label{eq:McKean-limitW}
 \dd f_t \ = \ \overline{s}(\mathscr{L}(f_t)) \ f_t(1-f_t) \dd t +
    \overline{\Theta}(f_t)\dd t
    +\sqrt{f_t(1-f_t)}\;\dd B_t  \\
    \mbox{where} \ \  \overline{s}(\zeta)\;=2U'(<\zeta,2\mbox{Id}-1>)\qquad;\qquad
\overline{\Theta}(x) := \theta^{(+)}(1-x) - \theta^{(-)} x \nonumber
\end{eqnarray}
and $\mathscr{L}(f_t)$ denotes the law of the process $f_t$. This shows that in polygenic adapation, selection at the genome level ``percolates'' at the locus level  where it dictates a non-linear Wright-Fisher dynamics.  Further, the averaged process $\mu_{{\bf X}_t}$  converges to $\mathscr{L}(f_t)$ which in turn can be calculated as the weak solution of the non-linear IPDE (integro partial differential equation) corresponding to the Fokker-Planck equation associated to (\ref{eq:McKean-limitW})
\begin{equation}\label{eq:McKean-limitPDE}
\partial_t u_t(x) =-\partial_{x}\left[ \left(\bar s\left(u_t(\cdot)\right)x(1-x) +\overline{\Theta}(x) \right)u_t(x)\right] +  \frac{1}{2}  \partial_{xx} \left(x(1-x)u_t(x)\right).
\end{equation}
\begin{figure}
    \centering
    \includegraphics[width=.8\textwidth]{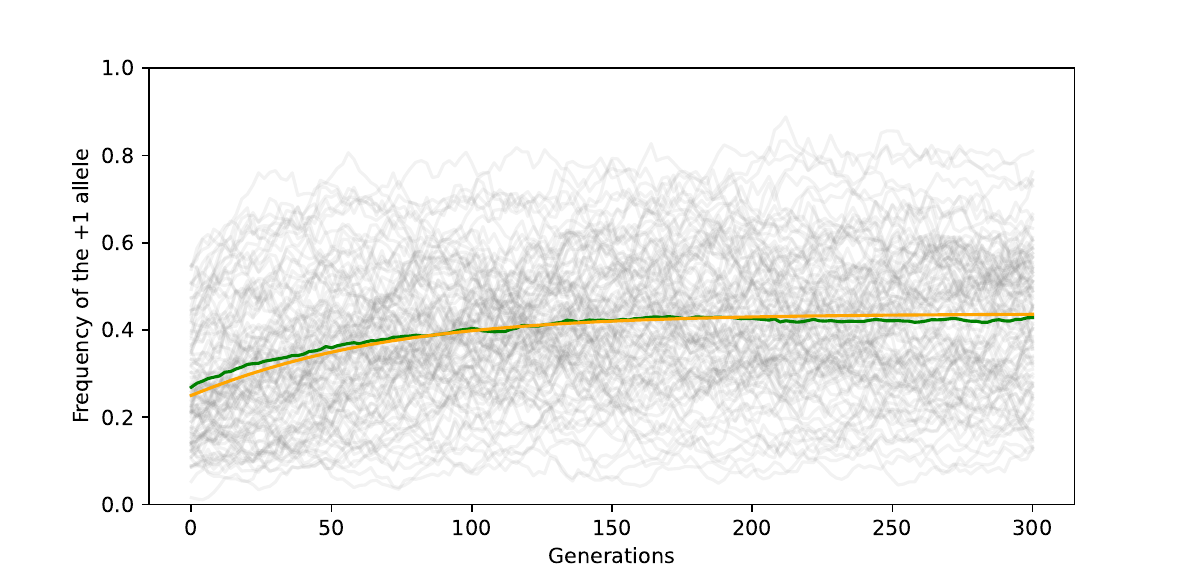}
    \caption{We consider the discete population of $N=1000$ individuals with $L=100$ genes each for $T = 1000$ generations, simulated as detailed in Section \ref{sec:defmodel} with single uniform crossing-over (see below). The mutation rates are $\theta=(1.1,3.3)$, the strength of stabilizing in (\ref{eq:U-square}) is $\kappa =15$ and $z^*=0$. At time $t=0$, the population is distributed according to the neutral discrete Wright-Fisher equilibrium with mutation rate $\theta$. The grey lines show the trajectories of the frequency of the +1 allele at each individual locus. The green line is the average of the grey lines. The orange line corresponds to the mean-field approximation  (\ref{eq:Lande}), computed with an Euler approximation scheme.  The code is available on  \url{https://github.com/PhCourau/Gene-s_eye_view_of_quantitative_genetics}.
 }
    \label{fig:propagation_chaos}
\end{figure}

We now state our main result on the convergence to (\ref{eq:McKean-limitPDE}). We let $\mathcal{P}([0,1])$ be the set of probability measures on $[0,1]$ equipped with the weak topology.
\begin{thm}\label{thm:strongcvgrho}
Assume that $\mu_{\mathbf{X}_0}$ converges in law to a deterministic measure $m_0$, and that
\begin{equation}\label{ass:rhobeta1}
\rho r^{**} \gg L^{2}\ln(\rho) 
\end{equation}
Then
\begin{enumerate}
    \item For every $T>0$
    $$(\mu_{\mathbf{X}_t})_{t\in [0,T]} \quad\Longrightarrow \quad \left(\mathscr{L}(f_t)\right)_{t\in [0,T]} $$
    where $\Longrightarrow$ denotes weak convergence in $\DD([0,T],\mathcal{P}([0,1]))$ for the Skorokhod J1 topology (see \cite{billingsley2013convergence}, Chapter 3), and $f_t$ is the unique solution of the McKean-Vlasov equation (\ref{eq:McKean-limitW}) with initial distribution $m_0$. In particular, $(\mathscr{L}(f_t))_{t\in[0,T]}$ is the unique weak solution to (\ref{eq:McKean-limitPDE}) (see Section \ref{sec:Ltoinf} for details).
    \item Let $n\in\NN$. Assume there exists a sequence integers $\ell_1^L<\dots<\ell_n^L$ in $[L]$, such that  $(p^{\ell_i^L}(\mathbf{X}_0))_{i\in [n]}$ has law converging to $\mathfrak{P}_0\in \mathcal{P}([0,1]^n)$ and that
\begin{align}
        \min_{i\in [n]}\rho r_{\ell_i^L}^* \gg L^{2}\ln(\rho)        \label{ass:rhorell}\\
        \min_{\substack{i,j\in [n]\\i\neq j}} \rho r_{\{\ell_{i}^L,\ell_{j}^L\}} \gg L
        \label{ass:rhoell1ell2}
\end{align}
    Then for every $T>0$
    \begin{itemize}
        \item $(p^{\ell_i^L}(\mathbf{X}_t))_{t\in [0,T];i\in [n]}$ converges in distribution to $n$ diffusions $(\overline{p}^i_t)_{t\in [0,T];i\in [n]}$ solutions to
        \begin{equation}
            \dd\overline{p}_t^i  = \overline{s}(\mathscr{L}(f_t))\overline{p}_t^i(1-\overline{p}_t^i) \dd t +
            \overline{\Theta}(\overline{p}_t^i)\dd t
            +\sqrt{\overline{p}_t^i(1-\overline{p}_t^i)}\;\dd B_t^i 
            \label{eq:realMcKean}    
        \end{equation}
        with $(B^i)_{i\in [n]}$ independent Brownian motions and initial conditions $\mathscr{L}((\overline{p}_0^i)_{i\in [n]}) =\mathfrak{P}_0$.
   \item 
    If $\mathfrak{P}_0 = m_0^{\otimes n}$ then $(p^{\ell_i^L}(\mathbf{X}_t))_{t\in [0,T];i\in [n]}$ converges in distribution to $n$ independent McKean-Vlasov diffusions (\ref{eq:McKean-limitW}).
    \end{itemize}
\end{enumerate}
\end{thm}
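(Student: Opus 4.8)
\emph{Sketch of the argument.} The plan is to treat the $L$ allelic frequencies $(p^\ell(\mathbf{X}_t))_{\ell\in[L]}$ as a weakly interacting particle system and to reduce both parts to a single quantitative input, namely the decay of the linkage disequilibria
\[
D^{\mathcal J}_t \;:=\; \mathbb{E}_{\mathbf{X}_t}\!\Big[\prod_{\ell\in\mathcal J}\gamma^\ell\Big]-\prod_{\ell\in\mathcal J}\mathbb{E}_{\mathbf{X}_t}\!\big[\gamma^\ell\big],\qquad 2\le|\mathcal J|\le 3 .
\]
First I would apply It\^o's formula to $p^\ell(\mathbf{X}_t)=\tfrac12\big(1+\mathbb{E}_{\mathbf{X}_t}[\gamma^\ell]\big)$ via the master SDE (\ref{eq:intromaster}). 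Three things come out: (i) the recombinator $R$ leaves one-locus marginals invariant, so it contributes nothing to the drift of $p^\ell$, while on a set $\mathcal J$ it contracts $D^{\mathcal J}$ at a rate $\gtrsim \rho\min_{\ell\neq\ell'\in\mathcal J} r^{\{\ell,\ell'\}}$ (for two loci, exactly $\rho\,r^{\{\ell,\ell'\}}$); (ii) the mutator produces exactly $\overline{\Theta}(p^\ell)$; (iii) writing the quadratic $U$ of (\ref{eq:defquadraW})--(\ref{eq:U-square}) around the population trait mean $\overline z:=\langle\mu_{\mathbf{X}_t},2\mathrm{Id}-1\rangle$, the selection term $LS$ produces $\overline{s}(\mu_{\mathbf{X}_t})\,p^\ell(1-p^\ell)$ plus an error which is a linear combination, with coefficients $O(1)$ and $O(1/L)$, of $\sum_{m}D^{\{\ell,m\}}$ and $\sum_{m,m'}D^{\{\ell,m,m'\}}$ --- only $2$- and $3$-locus disequilibria enter at leading order, which is precisely why degree $\le 2$ of $U$ is needed. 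Since $\overline z$ is a \emph{continuous} functional of $\mu_{\mathbf{X}_t}$, $\overline{s}(\mu_{\mathbf{X}_t})=2U'(\overline z)$ raises no closure problem. Finally a direct computation gives $d\langle p^\ell\rangle_t=p^\ell(1-p^\ell)\,dt$ exactly and $d\langle p^\ell,p^{\ell'}\rangle_t=\tfrac14 D^{\{\ell,\ell'\}}_t\,dt$, so the martingales driving distinct loci become orthogonal exactly to the extent that pairwise disequilibria are small.

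The real obstacle is the quantitative linkage-disequilibrium estimate: under (\ref{ass:rhobeta1}) one needs $\sup_{t\le T}\mathbb{E}\big[\tfrac{1}{L^2}\sum_{\ell\neq\ell'}(D^{\{\ell,\ell'\}}_t)^2\big]\to0$; under (\ref{ass:rhorell}) the same restricted to a single row $\ell=\ell_i$; and under (\ref{ass:rhoell1ell2}) that $\sup_{t\le T}\mathbb{E}\big[(D^{\{\ell_i,\ell_j\}}_t)^2\big]\to0$ for the chosen pairs. For this I would write the It\^o system obeyed by the family $(D^{\mathcal J})$ --- a BBGKY-type hierarchy in which each $D^{\mathcal J}$ is damped by recombination at rate $\gtrsim\rho r$, is fed by the genetic-drift term (whose quadratic variation is of order one per pair, so would by itself produce $D=O((\rho r)^{-1/2})$), and is coupled by selection to the $O(L^{|\mathcal J|})$ disequilibria of comparable and of one-higher order through coefficients $O(1/L)$ --- then square, sum over the relevant index set, and close it by Gronwall's lemma on $[0,T]$ (the order-$4$ disequilibria produced by the quadratic selection enter only quadratically in the lower ones, which makes the estimate self-improving). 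The crude $\ell^\infty$ counting over the $O(L^{k})$ correlations of each order $k$, combined with the Gronwall factor over the horizon $T$ and the $\sim\ln\rho$ relaxation times needed to forget the initial correlations, is exactly what yields the threshold $\rho r^{**}\gg L^2\ln\rho$ and its per-locus counterpart $\rho r_{\ell_i}^*\gg L^2\ln\rho$; the much weaker (\ref{ass:rhoell1ell2}) suffices for the single disequilibrium $D^{\{\ell_i,\ell_j\}}$ because it only needs to kill the cross-bracket $\langle p^{\ell_i},p^{\ell_j}\rangle_T=\tfrac14\int_0^T D^{\{\ell_i,\ell_j\}}_s\,ds$, not a whole matrix or row.

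Granted these estimates, part 1 follows along standard lines. Tightness of $(\mu_{\mathbf{X}_t})_{t\le T}$ in $\DD([0,T],\mathcal{P}([0,1]))$ comes from Aldous's criterion and uniform estimates on $t\mapsto\langle\mu_{\mathbf{X}_t},\phi\rangle$, $\phi\in C^2([0,1])$ (all coefficients being bounded on the compact state space). Any subsequential limit $\xi_\bullet$ satisfies, upon passing to the limit in the It\^o expansion of $\langle\mu_{\mathbf{X}_t},\phi\rangle$, the weak equation $\langle\xi_t,\phi\rangle=\langle\xi_0,\phi\rangle+\int_0^t\langle\xi_s,\mathcal A_{\xi_s}\phi\rangle\,ds$, where $\mathcal A_\zeta$ is the generator of (\ref{eq:McKean-limitW}): the error terms vanish by the disequilibrium bound, the martingale bracket of $\langle\mu_{\mathbf{X}_t},\phi\rangle$ vanishes (being $O(1/L)$ plus a term controlled by the same bound), and $\overline z$ is continuous; thus $\xi_\bullet$ is a weak solution of (\ref{eq:McKean-limitPDE}). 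Uniqueness of the limit then comes from well-posedness of the McKean-Vlasov equation / IPDE, invoked from Section \ref{sec:Ltoinf}; since the limit is deterministic and continuous, convergence in the $J1$ topology is automatic.

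For part 2 I would track the augmented process $\big((p^{\ell_i}(\mathbf{X}_t))_{i\le n},\,\mu_{\mathbf{X}_t}\big)$. Joint tightness is immediate from the one-dimensional bounds, and on any subsequential limit each coordinate $\overline{p}^i$ has quadratic variation $\overline{p}^i(1-\overline{p}^i)\,dt$ and drift $\overline{s}(\mathcal{L}(f_t))\,\overline{p}^i(1-\overline{p}^i)+\overline{\Theta}(\overline{p}^i)$ --- the replacement of $\mu_{\mathbf{X}_t}$ by $\mathcal{L}(f_t)$ being licensed by part 1 (the convergence $\mu_{\mathbf{X}_t}\Longrightarrow\mathcal{L}(f_t)$ together with continuity of $\overline z$) --- while the driving martingales are mutually orthogonal because the cross-brackets $\langle p^{\ell_i},p^{\ell_j}\rangle_T\to0$ under (\ref{ass:rhoell1ell2}); this is precisely (\ref{eq:realMcKean}) with independent Brownian motions, and well-posedness of (\ref{eq:realMcKean}) identifies the limit. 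Finally, if $\mathfrak{P}_0=\xi_0^{\otimes n}$, uniqueness for the McKean-Vlasov equation (\ref{eq:McKean-limitW}) forces each $\overline{p}^i$ to have law $\mathcal{L}(f_t)$, and the independence of the limiting Brownian motions together with the product initial condition makes $(\overline{p}^i)_{i\le n}$ genuinely independent, i.e.\ $n$ independent copies of the McKean-Vlasov diffusion.
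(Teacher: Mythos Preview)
Your overall architecture --- reduce everything to quantitative control of $2$- and $3$-locus linkage disequilibria, then prove tightness, identify the limit through the martingale problem, and invoke uniqueness from Section~\ref{sec:Ltoinf} --- is exactly the paper's, and the structural observations you list (recombination invisible on one-locus marginals, mutation yielding $\overline\Theta$, quadratic $U$ producing only $2$- and $3$-point errors in the selection drift, cross-bracket equal to pairwise LD) are all correct and appear as Corollary~\ref{cor:SDEp(X)}, Lemma~\ref{lem:pibarom}, and Proposition~\ref{prop:boundDSwithL2}.

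Where your route genuinely differs is the LD estimate itself, and there the sketch is both more complicated than needed and slightly off. You propose a BBGKY hierarchy on the cumulants $D^{\mathcal J}$, squared, summed, and closed by Gronwall, with selection coupling each order to the next. The paper sidesteps the hierarchy entirely: for each fixed $A$ with $|A|\le 3$ it tracks $Y_t^A:=\|\mathbf{X}_t^A-\pi(\mathbf{X}_t^A)\|_2^2$, applies It\^o to the \emph{regularised} $e^{\rho r^A t}\sqrt{\eta^2+Y_t^A}$ with $\eta^2=(\rho r^A)^{-1}$ (the raw square root being singular at zero), and bounds the selection contribution crudely by $\|S^A\|_\infty=O(1)$, which already gives $CL/(\rho r^A)$ with no coupling to higher orders. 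The contraction rate $r^A$ is justified by a full spectral computation of $\nabla\hat R^A$ in the Walsh basis $(\mathbf w_{\mathcal I})$, where it is shown to be triangular with diagonal $(-\beta_{\mathcal I})$ (Theorem~\ref{thm:eigenvalueR}, Proposition~\ref{prop:domnablaRfinal}); for $|A|=3$ this is not obvious and is the real content of Section~\ref{sec:eigenrec}. Finally, your attribution of the $\ln\rho$ in~(\ref{ass:rhobeta1}) to ``relaxation times needed to forget the initial correlations'' is incorrect: the initial condition contributes $e^{-\rho r^A\varepsilon_L}$, which is negligible for $\varepsilon_L=(\rho r^{**})^{-1/2}$. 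The $\ln\rho$ comes instead from controlling the \emph{supremum over $t\in[\varepsilon_L,T]$} of the martingale part, via an $L^1$ law of the iterated logarithm (Appendix~\ref{sec:apLIL}) that gives $\mathbb E\sup_t(\cdot)\le C\sqrt{\ln(\rho r^A)/(\rho r^A)}$; a plain Gronwall on second moments would give only pointwise-in-$t$ control.
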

Let us briefly discuss the previous assumptions. First, the theorem relies on the parameter $r^{**}$, the importance of which was already noted by Bulmer \cite{Bulmer_1974}. 
Secondly, the strong recombination conditions (\ref{ass:rhobeta1}--\ref{ass:rhoell1ell2}) are satisfied provided that recombination grows significantly with \(L\). 

The intuition is as follows: recombination needs to be strong enough to sufficiently break correlations between loci, allowing a mean-field approximation to be valid. The technical challenge arises because selection tends to induce correlations along the genome. For instance, if the optimum \(z^*\) is at 0 and one knows that \(\gamma^\ell = +1\) at a given locus, selection will tend to favor \(-1\) at other loci to compensate, keeping the trait near the optimum. In this way, selection introduces negative correlations across the genome.

Since the strength of selection scales with \(L\), it becomes necessary to assume a recombination rate that also scales sufficiently-specifically, one that is large in \(L\)-to counteract these correlations.

To provide more intuition on the strong recombination conditions (\ref{ass:rhobeta1}-\ref{ass:rhoell1ell2}) of the previous theorem, we explore some of the classical recombination models.
\begin{itemize}
   \item The model known to population geneticists as free recombination  corresponds to the case where $\nu(\mathcal{I}) = \frac{1}{2^L}$ for any $\mathcal{I}\subseteq [L]$. In this case we can check that
$$ \forall \ell_0 \in [L], \ \  r_{\ell_0}^* = r^{**} = \frac{1}{2}$$
so that conditions the strong recombination conditions (\ref{ass:rhobeta1}-\ref{ass:rhoell1ell2}) are satisfied provided that $\rho \gg L^2\ln(L)$. 
    \item \textbf{Single crossing-over.} 
    Let $\mu$ be a probability measure 
    on $[0,1]$ with stricty positive continuous density. Then $\nu$ is the law of the random set 
    $$
    {\mathcal J} = \left\{ i\in[L], \frac{i}{L+1} \leq X  \right\}, \ \ \mbox{where $\mathscr{L}(X) = \mu$.}
    $$
    The strong recombination conditions (\ref{ass:rhobeta1}-\ref{ass:rhoell1ell2}) are satisfied provided that $\rho \gg L^2\ln(L)^2$.
    \item{\bf Multiple crossing-overs.}
   Consider a Poisson Point Process with an intensity measure with a strictly positive continuous density, seen as a random set of points $\lambda_1<\dots<\lambda_{N}$. We add the boundary points $\lambda_0:=0$ and $\lambda_{N+1}:=1$.
    Then $\nu$ is the law of the random set 
    $$
    {\mathcal J} = \left\{i \in[L] : \exists k\leq  \frac{N+1}{2} \ \ \mbox{s.t.}  \ \  \frac{i}{L+1} \in [\lambda_{2k},\lambda_{2k+1})\right\}
    $$
\end{itemize}

\bigskip

\subsection{Invariant distribution(s).} 
Assume that $\theta^{(+)},\theta^{(-)}>0$. For any $y\in\RR$, define 
\begin{equation}\label{def:Piy}
\Pi_y(x) = C_y x^{2\theta^{(+)}-1} (1-x)^{2\theta^{(-)}-1} e^{2xy}\dd x
\end{equation}
with $C_y$ a normalization constant so that $\Pi_y$ is a probability on $[0,1]$. It is well known that the classical Wright-Fisher SDE (\ref{eq:WF-11}) with a constant selection term $s$ has a unique invariant distribution $\Pi_s$. From there, the Lipschitzness of $\overline{s}$ trivially implies
\begin{thm}\label{thm:invariant}
Define
\begin{equation}\label{eq:tmpchi}
\chi: y  \longmapsto \overline{s}(\Pi_y)
\end{equation}
where $\bar s$ is the function defined in the McKean-Vlasov equation (\ref{eq:McKean-limitW}).
Then 
$\chi$ admits at least one fixed point. Futher, 
\begin{itemize}
\item  The set of invariant distribution for  (\ref{eq:McKean-limitW}) coincides with
$$
\{\Pi_{y^*}\  : \ \chi(y^*)=y^* \}
$$
\item Assume that the initial condition of (\ref{eq:McKean-limitW}) is given by $\Pi_{y^*}$ with $\chi(y^*)=y^*$. Then $(f_t)_{t\geq0}$ is distributed as a classical Wright-Fisher diffusion (\ref{eq:WF-11}), with a constant selection term $s \equiv \overline{s}(\Pi_{y^*})$ and initial distribution $\Pi_{y^*}$.
\end{itemize}
\end{thm}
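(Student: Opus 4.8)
The plan is to reduce the statement to two essentially separate pieces: (i) the existence of a fixed point of $\chi$, and (ii) the identification of the invariant distributions of the McKean--Vlasov equation with the $\Pi_{y^*}$ for fixed points $y^*$, together with the self-consistency remark. For (i), I would first check that $\chi$ is continuous and maps a suitable compact interval into itself, so that Brouwer (in dimension one, the intermediate value theorem) applies. Concretely, $\overline{s}(\zeta) = 2U'(\langle \zeta, 2\mathrm{Id}-1\rangle)$ depends on $\zeta = \Pi_y$ only through the single number $m(y) := \langle \Pi_y, 2\mathrm{Id}-1\rangle = 2\mathbb{E}_{\Pi_y}[X] - 1 \in (-1,1)$. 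Since $\Pi_y(x) \propto x^{2\theta_1-1}(1-x)^{2\theta_2-1}e^{2xy}$ is an exponential family in the parameter $y$, $y\mapsto m(y)$ is smooth and strictly increasing (its derivative is $4\,\mathrm{Var}_{\Pi_y}(X)>0$), with $m(y)\to\pm1$ as $y\to\pm\infty$. When $U$ has degree $1$, $U'$ is a constant and $\chi$ is constant, so a fixed point trivially exists; when $U(z) = -\omega(z-z^*)^2$, $U'(z) = -2\omega(z-z^*)$ is affine, so $\chi(y) = -4\omega(m(y)-z^*)$ is a bounded continuous function of $y\in\RR$, hence has a fixed point by the intermediate value theorem (compare $\chi(y)-y$ at $y\to\pm\infty$, where it tends to $\mp\infty$). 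This handles the first assertion.

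For the second assertion, the key observation is the final bullet: if $f_0 \sim \Pi_{y^*}$ with $\chi(y^*)=y^*$, then along the McKean--Vlasov flow the selection coefficient $\overline{s}(\mathcal{L}(f_t))$ is frozen. Indeed, suppose we plug into (\ref{eq:McKean-limitW}) the \emph{linear} (classical) Wright--Fisher SDE (\ref{eq:WF-11}) with constant selection $s = \overline{s}(\Pi_{y^*}) = \chi(y^*) = y^*$ and initial law $\Pi_{y^*}$; since $\Pi_{y^*} = \Pi_s$ is precisely the stated invariant distribution of that linear SDE, we get $\mathcal{L}(f_t) = \Pi_{y^*}$ for all $t\ge 0$, and therefore $\overline{s}(\mathcal{L}(f_t)) = \overline{s}(\Pi_{y^*}) = y^* $ for all $t$ — so this candidate process in fact solves the McKean--Vlasov equation. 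By uniqueness of the solution to (\ref{eq:McKean-limitW}) (asserted in Theorem \ref{thm:strongcvgrho}), it \emph{is} the McKean--Vlasov solution started from $\Pi_{y^*}$, which proves both the last bullet and the inclusion $\{\Pi_{y^*} : \chi(y^*)=y^*\} \subseteq \{\text{invariant distributions}\}$.

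For the reverse inclusion, let $\zeta$ be any invariant distribution of (\ref{eq:McKean-limitW}): starting from $f_0\sim\zeta$, stationarity forces $\mathcal{L}(f_t) = \zeta$ for all $t$, hence $\overline{s}(\mathcal{L}(f_t)) \equiv \overline{s}(\zeta) =: s_\zeta$ is constant in time. Thus $f$ solves the \emph{linear} Wright--Fisher SDE with constant selection $s_\zeta$, and its one-dimensional marginal $\zeta$ is a stationary law of that linear diffusion; by the classical uniqueness of the invariant distribution for (\ref{eq:WF-11}) with constant $s$ (valid since $\theta_1,\theta_2>0$, guaranteeing a reflecting/entrance boundary behaviour at $0$ and $1$ and ergodicity), we get $\zeta = \Pi_{s_\zeta}$. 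But then $\chi(s_\zeta) = \overline{s}(\Pi_{s_\zeta}) = \overline{s}(\zeta) = s_\zeta$, so $s_\zeta$ is a fixed point of $\chi$ and $\zeta = \Pi_{s_\zeta}$ is of the claimed form.

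The step I expect to be the main obstacle — or at least the one needing the most care — is the rigorous justification that an invariant law $\zeta$ of the nonlinear equation makes $\overline{s}(\mathcal{L}(f_t))$ genuinely constant in $t$ and that the resulting \emph{time-homogeneous} linear Wright--Fisher diffusion has $\zeta$ as its \emph{unique} stationary distribution: one must invoke (or quote) that with $\theta_1,\theta_2>0$ the boundaries $0,1$ are not absorbing, so the diffusion is positively recurrent on $(0,1)$ with a unique invariant measure given by the speed density, which is exactly $\Pi_s$. A secondary subtlety is purely definitional — whether "invariant distribution for (\ref{eq:McKean-limitW})" is meant in the sense of a stationary law of the (nonlinear) flow on $\mathcal{P}([0,1])$; under that reading the argument above is the natural one, and I would state this convention explicitly at the start of the proof.
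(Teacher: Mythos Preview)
Your proposal is correct and follows essentially the same route as the paper's proof: boundedness and continuity of $\chi$ for the fixed point, then the two inclusions via the fact that freezing $\overline{s}$ at a constant value reduces (\ref{eq:McKean-limitW}) to the classical Wright--Fisher diffusion with unique invariant law $\Pi_s$. If anything you are slightly more careful than the paper --- you explicitly invoke uniqueness of the McKean--Vlasov solution (Corollary~\ref{cor:uniqueness}) to pin down the last bullet, and you flag the need for $\theta_1,\theta_2>0$ to guarantee uniqueness of the linear invariant measure, whereas the paper simply quotes Kimura's result.
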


We emphasize that there may exist several solutions  to the fixed point problem of Theorem \ref{thm:invariant}. For $U$ as in (\ref{eq:U-square}) with $z^*=0$  and $\theta^{(+)}=\theta^{(-)}$, we show in  Corollary \ref{cor:statio} the existence of a critical $\kappa_c<0$ at which the system undergoes a pitchfork-like bifurcation. See Fig \ref{fig:bifurcation}.

\begin{figure}[ht]
    \centering
    \includegraphics[width=.6\textwidth]{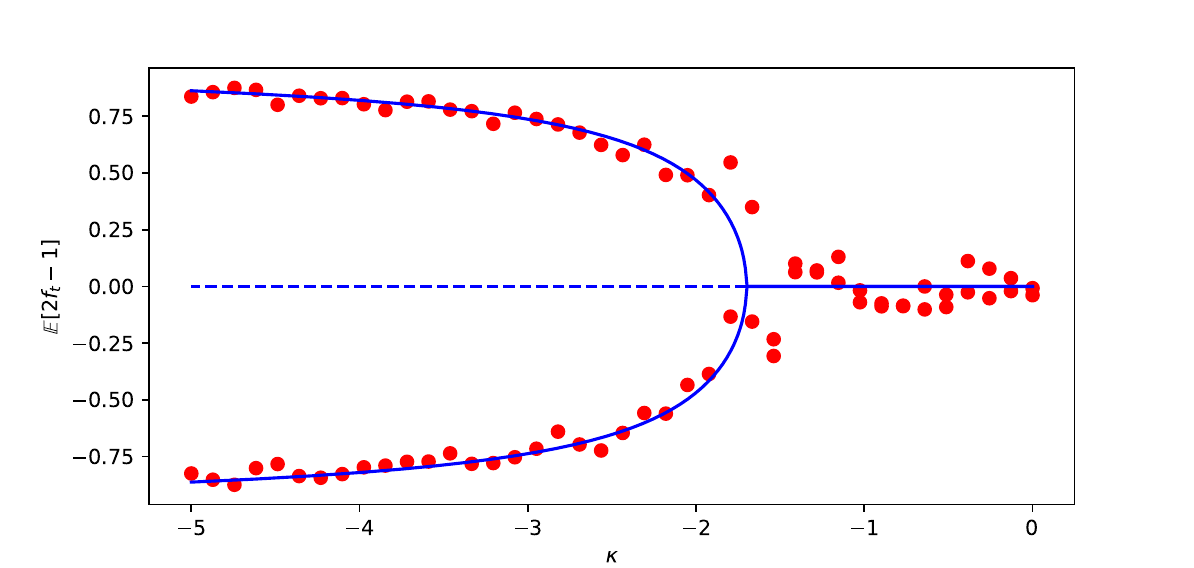}
    \caption{Supercritical pitchfork bifurcation in disruptive selection (\ref{eq:U-square}). For each value of $\kappa$ on the $x$-axis, we simulated two discrete population (as detailed in Section \ref{sec:defmodel}) with $N=200,L=100$ after $T=20N$ generations, with initial conditions "all $+1$" or "all $-1$". The red dots correspond to  $<\mu_{\mathbf{X}_T},2\mbox{Id}-1>$ at the end of the simulation.  The mutation rates are fixed $\theta^{(+)}=\theta^{(-)}=0.6$, and the selection optimum is $z^*=0$. The blue lines correspond to the possible values of $\EE[2f_t-1]$ for stationary solutions to the limit equation (\ref{eq:McKean-limitW}). Corollary \ref{cor:statio} predicts a pitchfork-like bifurcation at $\kappa_c=-1.7$.}
    \label{fig:bifurcation}
\end{figure}

\bigskip

\subsection{Trait Variance. \label{sec:traitvar}}
A biologically important quantity is the  variance of the trait $\mathbf{Var}_{\mathbf{X}_t}[Z(g)]$. Our mean-field approximation in Theorem \ref{thm:strongcvgrho}(1) entails that the variance goes to $0$. The next result provides a second order approximation.
\begin{thm}\label{thm:geneticvariance}
Assume that the assumptions of Theorem \ref{thm:strongcvgrho} part 1. hold. 
    Set
\begin{equation}\label{def:epsilon}
\varepsilon_L:=\frac{1}{\sqrt{\rho r^{**}}}
\end{equation}
Define the genetic variance $\sigma_t^2:=4\EE[f_t(1-f_t)]$. Then
    $$\EE\left[\sup\limits_{t\in [\varepsilon_L,T]}\left|L\mathbf{Var}_{\mathbf{X}_t}[Z(g)] - \sigma_t^2\right|\right]
    \longrightarrow 0$$
\end{thm}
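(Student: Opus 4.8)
The plan is to decompose $L\mathbf{Var}_{\mathbf{X}_t}[Z(g)]$ into a ``diagonal'' part that converges to $\sigma_t^2$ and an ``off-diagonal'' part built out of pairwise linkage disequilibria that we must show is negligible. Write $Z(g) = \frac{1}{L}\sum_{\ell}\gamma^\ell$, so that
\begin{equation*}
L\,\mathbf{Var}_{\mathbf{X}_t}[Z(g)] = \frac{1}{L}\sum_{\ell\in[L]} \mathbf{Var}_{\mathbf{X}_t}[\gamma^\ell] + \frac{1}{L}\sum_{\ell_1\neq\ell_2}\mathbf{Cov}_{\mathbf{X}_t}[\gamma^{\ell_1},\gamma^{\ell_2}].
\end{equation*}
The diagonal term equals $\frac{1}{L}\sum_\ell 4 p^\ell(\mathbf{X}_t)(1-p^\ell(\mathbf{X}_t)) = 4\langle \mu_{\mathbf{X}_t}, x\mapsto 4x(1-x)\rangle$ up to the obvious rescaling (since $\gamma^\ell\in\{-1,+1\}$, $\mathbf{Var}[\gamma^\ell] = 4p^\ell(1-p^\ell)$), which by Theorem~\ref{thm:strongcvgrho}(1) and the continuity and boundedness of $x\mapsto x(1-x)$ converges in law, uniformly on $[0,T]$ after the standard Skorokhod-to-uniform upgrade for continuous limits, to $\sigma_t^2 = 4\EE[f_t(1-f_t)]$. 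Taking expectations and using uniform integrability (everything is bounded by $1$) turns this into the desired $L^1$ statement for the diagonal part; the time interval is restricted to $[\varepsilon_L,T]$ only because of the off-diagonal estimate below, not the diagonal one.

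The heart of the matter is to prove that
\begin{equation*}
\EE\left[\sup_{t\in[\varepsilon_L,T]}\left|\frac{1}{L}\sum_{\ell_1\neq\ell_2}\mathbf{Cov}_{\mathbf{X}_t}[\gamma^{\ell_1},\gamma^{\ell_2}]\right|\right] \longrightarrow 0,
\end{equation*}
i.e.\ that the averaged pairwise linkage disequilibrium $D^{\{\ell_1,\ell_2\}}_t := \mathbf{Cov}_{\mathbf{X}_t}[\gamma^{\ell_1},\gamma^{\ell_2}]$ is small on average. This is where the recombination scaling enters. I would derive, via Itô's formula applied to $\mathbf{X}_t\mapsto D^{\{\ell_1,\ell_2\}}(\mathbf{X}_t)$ using the SDE~(\ref{eq:intromaster}), an evolution equation of the schematic form $dD^{\{\ell_1,\ell_2\}}_t = -\rho\, r^{\{\ell_1,\ell_2\}} D^{\{\ell_1,\ell_2\}}_t\,dt + (\text{selection/mutation drift of order } 1)\,dt + (\text{martingale})$: recombination between $\ell_1$ and $\ell_2$ destroys their correlation at rate $\rho r^{\{\ell_1,\ell_2\}}$, while selection (order $LS$, but acting on each of $L$ loci, hence order $1$ per locus) and mutation feed it at bounded rate, and the Wright-Fisher noise contributes a martingale whose bracket is bounded. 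Gronwall-type integration then gives $\EE|D^{\{\ell_1,\ell_2\}}_t| \lesssim e^{-\rho r^{\{\ell_1,\ell_2\}} t} D^{\{\ell_1,\ell_2\}}_0 + \frac{1}{\rho r^{\{\ell_1,\ell_2\}}}(\text{bounded})$, so at times $t\gtrsim \varepsilon_L$ the transient term is gone and each $|D^{\{\ell_1,\ell_2\}}_t|$ is $O\!\big(\frac{1}{\rho\, r^{\{\ell_1,\ell_2\}}}\big)$; averaging $\frac{1}{L}\sum_{\ell_1\neq\ell_2}$ and bounding $\frac{1}{L}\sum_{\ell_1,\ell_2}\frac{1}{\rho r^{\{\ell_1,\ell_2\}}}$ by $\frac{L}{\rho r^{**}}$ using the definitions~(\ref{eq:defrell})--(\ref{eq:defr1}) of the harmonic recombination rates, this is $O(L/(\rho r^{**})) = O(\varepsilon_L^2)\to 0$ by~(\ref{ass:rhobeta1}); the choice $\varepsilon_L = (\rho r^{**})^{-1/2}$ is precisely the geometric mean of the relaxation time and the equilibrium LD size, which is the time after which both error sources are $o(1)$. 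The supremum over $t$ is handled by a maximal inequality for the martingale part (Doob/BDG) together with the monotone control of the drift, costing at most a logarithmic or polynomial-in-$L$ factor that is absorbed by the slack in~(\ref{ass:rhobeta1}).

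The main obstacle is making the Itô computation for $D^{\{\ell_1,\ell_2\}}_t$ and the associated Gronwall estimate genuinely uniform in the pair $(\ell_1,\ell_2)$ and in $L$: the selection operator $LS$ couples the locus pair to the whole genome, so the ``bounded feeding term'' is not literally bounded by a constant but by something that must itself be controlled in terms of lower-order moments of $\mathbf{X}_t$ (e.g.\ higher-order linkage disequilibria, which in principle could be large). The clean way around this, which I expect the authors to use, is to bound the selection contribution using the quadratic (degree $\le 2$) form of $U$ in~(\ref{eq:defquadraW}): because $W$ depends on the genome only through the additive trait $Z$, the selection-induced growth of a two-locus LD involves only $D^{\{\ell_1,\ell_2\}}$ itself plus terms that are already $O(1/L)$-averaged, closing the estimate without needing to control an infinite hierarchy. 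One then concludes by combining the diagonal convergence and the off-diagonal smallness and invoking that sums of an $L^1$-convergent sequence and an $L^1$-null sequence converge in $L^1$, which yields exactly the claimed bound $\EE[\sup_{t\in[\varepsilon_L,T]}|L\mathbf{Var}_{\mathbf{X}_t}[Z(g)] - \sigma_t^2|]\to 0$.
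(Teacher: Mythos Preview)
Your decomposition into diagonal and off-diagonal parts and the handling of each piece match the paper's proof exactly: the diagonal is $4\langle\mu_{\mathbf{X}_t},\mathrm{Id}(1-\mathrm{Id})\rangle$, controlled by Theorem~\ref{thm:strongcvgrho}(1), and the off-diagonal is the averaged pairwise LD, controlled by a Gronwall estimate against the recombination damping rate $\rho r^{\{\ell_1,\ell_2\}}$ (the paper packages this last step as Proposition~\ref{prop:controllink} and equation~(\ref{eq:supDell})).

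Two points where your sketch is slightly off. First, the selection feed into $D^{\ell_1,\ell_2}$ is of order $L$, not order $1$: the marginal selector satisfies $|LS^{\{\ell_1,\ell_2\}}|\le CL$ uniformly, so the equilibrium LD bound from the drift is $O(L/(\rho r^{\{\ell_1,\ell_2\}}))$ and the averaged off-diagonal sum is $O(L^2/(\rho r^{**}))$; this still vanishes under~(\ref{ass:rhobeta1}), but your displayed arithmetic ``$O(L/(\rho r^{**}))=O(\varepsilon_L^2)$'' is off by a factor of $L$ on each side. Second, your ``main obstacle'' about an LD hierarchy does not arise: the paper never attempts to close the selection term in terms of lower-order LDs, it simply uses the crude uniform bound $|S^A|\le C$ (valid because $W=LU(Z)$ with $U$ bounded on $[-1,1]$) inside the Gronwall argument, which is all that is needed. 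The quadratic structure of $U$ is used elsewhere (Proposition~\ref{prop:boundDSwithL2} and Lemma~\ref{lem:pibarom}) but not for this bound. The paper also treats the martingale contribution more carefully---its size $\sim(\rho r^A)^{-1/2}$ is actually the dominant error---via an $L^1$ law of the iterated logarithm rather than BDG, but your remark that the slack in~(\ref{ass:rhobeta1}) absorbs any reasonable maximal-inequality loss is correct.
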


Equation (\ref{eq:McKean-limitPDE}) has a biologically important corollary. Assume for simplicity there is no mutation $\theta^{(+)}=\theta^{(-)}=0$. The mean of the trait distribution satisfies
\begin{equation}\label{eq:Lande}
\frac{\dd}{\dd t}\EE[2f_t-1] = U'(\EE[2f_t-1])\times \sigma_t^2
\end{equation}
This is known as Lande's equation \cite{Hayward2022}. The term $U'(\EE[2f_t-1])$ is called the selection gradient.

\subsection{Significance and extensions}
Population genetics and quantitative genetics have mostly been developed in parallel throughout the twentieth century. Interpreting modern GWAS results requires understanding how the two interact, that is, how selection on a polygenic trait translates on the dynamics of allele frequencies \cite{Sella2019}. The historical approach to this problem, which we call the trait's eye-view, can be traced back to the works of Latter \cite{latter1960natural} and Bulmer \cite{Bulmer_1972} and has been applied to GWAS in \cite{Simons2018}. It consists in making assumptions on the distribution of the trait (typically a normality assumption), and conditional on the trait to model the evolution of the genes. Here we take the gene's-eye view \cite{geneeye}, meaning we start from a finite model of coupled equations representing the evolution of gene frequencies, and then let the number of loci go to infinity. In this setting, the marginal fitness of an allele depends not just on intrinsic properties of this allele but on the mean behavior of the other alleles within the population. Theorem \ref{thm:strongcvgrho} describes this limit with equation (\ref{eq:McKean-limitW}), which is a Wright-Fisher equation typical of population genetics. The behavior of the trait then emerges from the distribution of allele frequencies. This is seen in Theorem \ref{thm:geneticvariance} which describes the evolution of the trait once the behavior of the genes is known. From there we obtain equation (\ref{eq:Lande}) which is a typical equation of quantitative genetics. To the best of our knowledge, this gene-centric approach is new. We do note similarities with the ideas of the Dynamic Maximum Entropy (DME) approximation method \cite{bod2021dynamic}. This approximation from statistical physics consists in a joint modelling of deterministic macroscopic observables, corresponding in our setting to $\overline{s}(\mathscr{L}(f_t))$, and the evolution of a typical locus conditional on these observables. Concerning the description of stationarity, mean-field approximations to describe the stationary distribution of a quantitative trait under stabilizing selection were used in \cite{Nourmohammad_2013} (in a setting with symmetric mutations).

\subsubsection{Genetic structure}
Our approach raises the prospects of future exciting developments. The gene's-eye view in equation (\ref{eq:McKean-limitW}) can be extended without any difficulty to incorporate diploidy and dominance. It would also be important in terms of application to incorporate unequal allelic effects, replacing the set of genotypes $\{-1,+1\}^L$ with $\prod_{\ell\in [L]} \{-\alpha_\ell,+\alpha_\ell\}$ where the $\alpha_\ell$ have some distribution. This is not hard to do in our current setting, provided the allelic effects are bounded. However the ideal scaling would allow the distribution of allelic effects to have an exponential or even heavy (polynomial) tail. For instance, \cite{PARSONS2024117} found rough estimates of tail exponents in additive effects between 1 and 2.5, and \cite{Simons2018,Hayward2022} assume exponential tails. Similarly, we could account for mutation rate variation, letting $\theta^{(+)},\theta^{(-)}$ vary between the different loci. We will explore these extensions in future work.

\subsubsection{The strength of selection}
We chose in (\ref{eq:WLN}) to let the logfitness of an organism be of order $L$. This let us accommodate both stabilizing, directional and disruptive selection. In the limit equation (\ref{eq:Lande}), we see that for any initial condition, the mean trait value will evolve on the same timescale as $f_t$ in equation (\ref{eq:McKean-limitW}). In biological terms, the evolution of the trait is on the same timescale as genetic drift. This is in stark contrast with articles from the literature on stabilizing selection such as \cite{Hayward2022} which typically find that the evolution of the trait is much faster than the evolution of the underlying genes. There are differences with the underlying model, including in the way mutations are specified (they use the infinite-allele infinite-loci model and assume no mutational bias), but we believe that the results of \cite{Hayward2022} can be recovered from our model assuming the strength of selection to be of order $L^2$. Indeed, in Lemma \ref{lem:pibarom}, we compute the contribution of selection to the dynamics of locus $\ell$ under linkage equilibrium. Under stabilizing selection ($\kappa>0$ in (\ref{eq:U-square})), our result reads
$$
4\kappa\left(\eta-\frac{1}{L} \sum_{\ell'\in[L]}(2X_t^{\ell'}-1)\right) + \mathcal{O}\left(\frac{1}{L}\right)
$$
where we recall that $\mu_{\mathbf{X}_t}$ is the empirical distribution of $(p^\ell(\mathbf{X}_t))_{\ell\in[L]}$. But the same computations, replacing $L$ with $L^2$ in the base equation (\ref{eq:SDEfull}) yield the following selection coefficient at locus $\ell$
\begin{equation}\label{eq:selcoefL2}
4\kappa\left(L\eta- \sum_{\ell'\in[L]}(2X_t^{\ell'}-1)\right)  + 2\kappa \left(X_t^\ell-\frac{1}{2}\right)
\end{equation}
This corresponds to the selection coefficient as computed by \cite{Hayward2022}.
The second term is known to biologist as Robertson's underdominant term \cite{robertson1956effect}, and has been crucial for the evolutionary interpretation of GWAS results \cite{Simons2018}. 

Extending our results to the case where selection is of order $L^2$ requires handling the possibly degenerate first term of (\ref{eq:selcoefL2}). Furthermore, our control on linkage disequilibrium from Section \ref{sec:controlL2itself} will be affected, meaning the requirements on the strength of recombination $\rho$ for propagation of chaos will be stronger than in Theorem \ref{thm:strongcvgrho}.

Returning to the case when selection is of order $L$, if we assume $(\theta^{(+)}-\theta^{(-)})/(\theta^{(+)}+\theta^{(-)})\neq z^*$ in (\ref{eq:U-square}) and $X$ has distribution given by the stationary solution in Corollary \ref{cor:statio}, then
$$
\EE[2X-1-z^*] = \mathcal{O}(1)
$$
whereas Theorem \ref{thm:geneticvariance} states
$$
\sigma^2 = \mathcal{O}(1/L).
$$
This means in particular that at equilibrium, the population is very far from the selection optimum $|\EE[Z(g)]-z^*|\gg \sigma$. This corresponds to the drift-barrier hypothesis from \cite{driftbarrier}. The idea is that when selection is very weak, it cannot overpower the forces of mutation and genetic drift, and therefore the population at equilibrium remains very far from the optimum. This hypothesis was developed specifically for the evolution of the mutation rate, meaning $Z(g)$ corresponds to the probability of new mutations, making the models required to study this much more involved than ours (see for instance \cite{DAWSON1998143}). 

\subsubsection{The suppression of linkage by recombination}
Obtaining equation (\ref{eq:McKean-limitW}) required strong recombination, effectively enforcing independent between most loci. This prevents the formation of "linkage blocks". We do not believe our result to be optimal. For instance, if all loci are evenly spaced, except two loci $\ell_1,\ell_2$ so that $r_{\{\ell_1,\ell_2\}}=0$, then $r^{**} =0$, even though we expect that these two loci should not matter in the grand scheme of things. Furthermore, equation (\ref{eq:McKean-limitW}) seems to fit the dynamics of simulations even when $\rho$ is of order $L$ (see Fig \ref{fig:propagation_chaos}). 

Finding the optimal scaling for $\rho$ will be a daunting task. The suppression of linkage blocks by recombination was masterfully described in \cite{evans2013mutation}, in a very broad deterministic setting with recombination, bounded selection (whereas ours is of order $L$) and point mutations (no genetic drift). This approach assumes very rare mutations: a newborn's mutations are given by a point process on $[0,1]$ (representing the positions of the mutations along the chromosome). The main result is that linkage will be negligible if, loosely speaking, recombination separates two new mutations before a third one occurs. Statistical physicists have studied polygenic adaptation using Random Energy Models. These models typically assume the fitness of a genotype $g$ to be of the form
$$
W(g)=\sum_{\ell_1,\ell_2} f_{\ell_1\ell_2}g^{\ell_1}g^{\ell_2}
$$
for i.i.d random coefficients $(f_{\ell_1\ell_2})_{\ell_1,\ell_2\in [L]}$.
At least two phase transitions were identified for low recombination, which they call the transition from quasi-linkage equilibrium to clonal condensation or to non-random coexistence \cite{Dichio_2023}. These transitions see the appearance of very fit combination of genes which disproportionally contribute to the population without recombination breaking them up fast enough. The distinction between the two is that clonal condensation sees the appearance of true clones whereas in non-random coexistence a cloud of fit genotypes dominates the population. The phase transition occurs when selection is of the same order as recombination, corresponding in our system to $\rho\sim L$.

\subsection{Outline of the paper}
Our paper is organized as follows. In Section \ref{sec:Ltoinf}, we formally introduce McKean-Vlasov diffusions which generalize (\ref{eq:McKean-limitW}) and prove well-posedness of the associated martingale problem. In Section \ref{sec:StrongRec}, we prove Theorems \ref{thm:strongcvgrho} and \ref{thm:geneticvariance}.

\section*{On notation}
\subsection*{Summary of notations}

\begin{tabular}{l|l}
     $\square_A$& For $A\subseteq [L]$, the set $\{-1,+1\}^A$ \\\hline
     $\gamma|_A$& For $A\subseteq [L]$ and $\gamma\in \square_{[L]}$, the restriction of $\gamma$ to $A$ \\\hline
     $\XX^A$& The set of probability measures on $\square_A$, denoted $\mathbf{x}=(x(\gamma))_{\gamma\in \square_A}$ \\\hline
     $\mathbf{x}^A = (x^{A}(\gamma))_{\gamma\in \square_A}$& For $\mathbf{x}\in \RR^{\square_[L]}$, the marginal of $\mathbf{x}$ on $\square_A$ \\\hline
     $p^\ell(\mathbf{x})$& The frequency of the $+1$ allele at locus $\ell$: it is the same as $x^{\{\ell\}}(+1)$ \\\hline
     $\pi(\mathbf{x})$& The linkage equilibrium projection of $\mathbf{x}$ (see Section \ref{sec:linkeq}) \\\hline
    $\mathbf{x}[\varphi (g)]$ & For $\mathbf{x}\in \XX^{[L]}$, $\varphi$ a function on $\square_{[L]}$, this is the expectation of $\varphi(g)$ \\& where $g$ has law $\mathbf{x}$.     \\\hline
    $\mathbf{Var}_{\mathbf{x}},\mathbf{Cov}_{\mathbf{x}}$ & The variance and covariances associated with $\mathbf{x}$ (applied to functions of $g$) \\\hline
    $\mu_{\mathbf{x}}$& The allelic law:\\& $\mu_{\mathbf{x}}:= \frac{1}{L}\sum\limits_{\ell\in [L]} \delta_{p^\ell(\mathbf{x})}$
    \\\hline
     $\mathscr{L}(X)$ & The law of a variable $X$.\\\hline
     $\mathcal{O}$ & We write $\varphi_1=\mathcal{O}(\varphi_2)$ iff
     there is a constant $C>0$\\&\;
     such that $|\varphi_1|\leq C|\varphi_2|$ as some parameters of $\varphi_1$ and $\varphi_2$ are scaled.\\\hline
    $o$ & We write $\varphi_1=o(\varphi_2)$ iff there is a function $h$ such that $|\varphi_1|\leq h|\varphi_2|$ and $h\to 0$.\\\hline
    $C$& A positive constant \textbf{whose value may change from one line to the next}.\\\hline
\end{tabular}\vspace{10mm}

\subsection*{Marginals and restrictions}
For a genotype $\gamma\in\square_{[L]}$ and a subset $A\subseteq [L]$, $\gamma|_A:=(\gamma^\ell)_{\ell\in A}\in\square_A$ is the restriction of $\gamma$ to $A$.

A population $\mathbf{x}\in\XX^{[L]}$ can be seen as a vector of $\RR^{\square_{[L]}}$, written $(x(\gamma))_{\gamma\in\square_{[L]}}$. It can also be seen as a probability on $\square_{[L]}$. We write $\mathbf{x}[F(g)]$ for the expectation of $F(g)$, where $g$ is a random variable on $\square_{[L]}$ with law $\mathbf{x}$ and $F$ is a function on $\square_{[L]}$. We similarly define $\mathbf{Cov}_{\mathbf{x}}$ and $\mathbf{Var}_{\mathbf{x}}$ to be the covariance and variance associated with $\mathbf{x}$, evaluated on functionals of $g$.

For $\mathcal{I}\subseteq [L]$, $\mathbf{x}\in\RR^{\square_{[L]}}$, we let $\mathbf{x}^{\mathcal{I}} \equiv (x^{\mathcal{I}}(\gamma))_{\gamma\in\square_{\mathcal{I}}}$ be the marginal of $\mathbf{x}$ on $\mathcal{I}$. For $\mathbf{y}\in\RR^{\square_{[L]}}$ we may then define $\mathbf{x}^{\mathcal{I}}\otimes \mathbf{y}^{\mathcal{I}^c}$ as the product vector, that is, the vector such that its $\gamma-$th coordinate is
$$
\sum_{\substack{\gamma_1,\gamma_2\in\square_{[L]} \\
\gamma_1|_{\mathcal{I}}=\gamma|_{\mathcal{I}}\\
\gamma_1|_{\mathcal{I}}=\gamma|_{\mathcal{I}^c}}} x(\gamma_1)y(\gamma_2)
$$

\textbf{The parameters}\\
\begin{tabular}{c|l}
     $L$& The number of loci and the strength of selection\\
    & \textbf{Recombinator} $R$ \textbf{parameters}\\
     $\rho$& The strength of recombination (it depends on $L$, see Theorem \ref{thm:strongcvgrho})\\
     $\nu$& The measure associated to recombination (it depends on $L$)\\
     &\textbf{Mutator} $\Theta$ \textbf{parameters}\\
     $\theta^{(+)},\theta^{(-)}$& The mutation rate from $-1$ to $+1$ and back. \\
     &\textbf{Selector} $S$ \textbf{parameters}\\
     $U$& Under quadratic selection (\ref{eq:defquadraW}), $W(\gamma)=U(Z(\gamma))$ where $U$ is a polynomial of order 1 or 2\\
\end{tabular}

\section{Large genome limit \label{sec:Ltoinf}}
In this section, we describe McKean-Vlasov diffusions and the associated non-linear IPDEs, making sense of equation (\ref{eq:McKean-limitW}) which describes the limit behavior of our system as $L\to+\infty$. We fix two bounded measurable functions $a:\RR\to \RR_+$ and $b:\RR\times \mathcal{P}(\RR)\to \RR$ and a compactly supported probability measure $m_0\in\mathcal{P}(\RR)$.

\subsection{McKean-Vlasov diffusions \label{sec:mckean}}
Our aim is to give a sense to the following McKean-Vlasov SDE
\begin{eqnarray}\label{eq:SDEfull}
\dd f_t =& b(f_t,\xi_t)\dd t + \sqrt{a(f_t)}\dd B_t\qquad;\qquad \mathscr{L}(f_0)=m_0    \nonumber \\
& \mbox{with}  \ \xi_t = \mathscr{L}(f_t) \mbox{, the law of }f_t
\end{eqnarray}

\bigskip

Before going into (\ref{eq:SDEfull}), we first make a small detour and  consider an alternative problem. Let us now consider the following SDE
\begin{equation}\label{eq:toySDE}
\dd\hat{f}_t = b(\hat{f}_t,\zeta_t)\dd t + \sqrt{a(\hat{f}_t)}\dd B_t    \qquad;\qquad \mathscr{L}(\hat{f}_0)=m_0
\end{equation}
with $\zeta \in \DD([0,T],\mathcal{P}( \RR))$.
Recall that (\ref{eq:toySDE}) admits a weak solution on $[0,T]$ iff there exists a filtration $(\mathscr{F}_t;t\geq0)$ and an adapted  pair $(\hat{f},B)$,  
such that $B$ is a Brownian motion and (\ref{eq:toySDE}) is satisfied.
Theorem 6.1.6 of \cite{stroock1997multidimensional} implies the existence of weak solutions to (\ref{eq:toySDE}) for any initial condition $m_0$ as long as $a$ and $b(\cdot,m)$ are continuous for any $m\in \mathcal{P}(\RR)$. 

Given $\zeta\in \mathcal{P}(\RR)$, let us now define the following differential operator
$$\forall \varphi\in \mathcal{C}^2_c(\RR),\quad G_{\zeta} \varphi (x) := b(x,\zeta)\varphi'(x) + \frac{1}{2}a(x) \varphi''(x).$$
where $ \mathcal{C}_c^2(\RR)$ is the space of $\RR-$valued, compactly supported, continuously twice differentiable functions on $\RR$.

The existence and uniqueness in law of a solution to (\ref{eq:toySDE}) can be investigated through the associated martingale problem. For $\zeta\in\DD([0,T],\mathcal{P}(\RR))$, we say $\hat{f}$ solves the martingale problem for $(G_{\zeta_t})_{t\in[0,T]}$ iff
\begin{center}
    $(*) \quad \forall \varphi\in\mathcal{C}^2_c(\RR),\quad \varphi(\hat{f}_t)-\varphi(\hat{f}_0)-\int_0^t\dd u\,G_{\zeta_u}\varphi(\hat{f}_u)$ is a martingale in the filtration of $\hat{f}$ and $\mathscr{L}(f_0)=m_0$
\end{center}
Existence and uniqueness in law of a weak solution to (\ref{eq:toySDE}) is equivalent to existence and uniqueness of a solution to $(*)$ (see Theorem 4.5.2 in \cite{stroock1997multidimensional}).

\bigskip

Analogously, one can define a weak solution 
to the McKean-Vlasov SDE (\ref{eq:SDEfull}).
 Existence and uniqueness of weak solutions to the SDE (\ref{eq:SDEfull}) again boil down to the existence and uniqueness of solutions to the mean-field Martingale problem
\begin{center}
     $(**)\quad \forall \varphi\in\mathcal{C}^2_c(\RR),\quad \varphi(f_t)-\varphi(f_0)-\int_0^t\dd u\,G_{\xi_u}\varphi(f_u)$ is a martingale in the filtration of $f$\\
    $\ \ \ \ \ \ \ $ with $\mathscr{L}(f_0)=m_0$ and $\mathscr{L}(f_t) = \xi_t$\ \ \ \ \ \ 
\end{center}

\subsection{Weak solutions to non-linear IPDEs \label{sec:weaksolutionIPDE}}
We say a measure-valued process $\xi\in\DD([0,T],\mathcal{P}(\RR))$ is a {\it weak} solution to the non-linear IPDE
\begin{equation}\label{eq:PDEfull}
\partial_t u_t(x) = -\partial_x (b(x,u_t(\cdot))u_t(x)) + \frac{1}{2}\partial_{xx}(a(x)u_t(x))
\qquad;\qquad u_0=m_0
\end{equation}
if $t\mapsto \xi_t$ is continuous, $\xi_0=m_0$ and we have
$$
\forall \varphi\in \mathcal{C}^2_c(\RR),\ \qquad
\frac{\dd}{\dd t}<\xi_t,\varphi> \quad =\quad  <\xi_t,b(\cdot,\xi_t)\varphi'> 
+ \frac{1}{2} <\xi_t,a \varphi''>
$$

Consider $(f_t)_{t\in[0,T]}$ a weak solution to the McKean-Vlasov equation (\ref{eq:SDEfull}).
Then $(\mathscr{L}(f_t))_{t\in[0,T]}$ is a weak solution to the non-linear Fokker-Planck IPDE (\ref{eq:PDEfull}).

Let us now discuss the reciprocal statement: whether a weak solution to (\ref{eq:PDEfull}) is necessarily associated with a weak solution to (\ref{eq:SDEfull}).
We will need
\begin{thm}[Superposition principle (see Theorem 2.5 of \cite{superposition_principle})]
Consider $\tilde{a},\tilde{b}$ measurable bounded functions on $\RR\times [0,T]$ and define the generator
$$G_t \varphi(x) := \tilde{b}(x,t)\varphi'(x) + \frac{1}{2}\tilde{a}(x,t)\varphi''(x)$$
Let $\xi\in\DD([0,T],\mathcal{P}(\RR))$ be a weak solution to
$$
\partial_t u_t(x) = -\partial_x(\tilde{b}(x,t)u_t(x)) + \frac{1}{2}\partial_{xx}(\tilde{a}(x,t)u_t(x))
 \qquad;\qquad u_0 = m_0
$$
Then there exists a solution to the martingale problem
\begin{center}
     $\forall \varphi\in\mathcal{C}^2_c(\RR),\quad \varphi(f_t)-\varphi(f_0)-\int_0^t\dd u\,G_{u}\varphi(f_u)$ is a martingale in the filtration of $f$.
\end{center}
such that $\mathscr{L}(f_t) = \xi_t$.
\end{thm}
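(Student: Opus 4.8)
The statement is Theorem~2.5 of \cite{superposition_principle}, so in the paper we would simply invoke it; here is the route I would take to prove it, which is the now-classical superposition argument of Figalli and Trevisan for Fokker--Planck equations. We work directly with the bounded (and nonnegative) coefficients $a,b$ on the compact state space $[0,1]$. The first step is a mollification chosen so that the regularised equation is solved \emph{exactly}. Fix a smooth kernel $\eta_\varepsilon$ (extending everything across the endpoints $0,1$), set $\xi_t^\varepsilon:=\xi_t\ast\eta_\varepsilon$, and define smooth coefficients by $b_t^\varepsilon:=\big((b_t\,\xi_t)\ast\eta_\varepsilon\big)/\xi_t^\varepsilon$ and $a_t^\varepsilon:=\big((a_t\,\xi_t)\ast\eta_\varepsilon\big)/\xi_t^\varepsilon$; these are again bounded by $\|b\|_\infty$ and $\|a\|_\infty$, and by construction $\xi^\varepsilon$ solves $\partial_t\xi_t^\varepsilon=-\partial_x(b_t^\varepsilon\xi_t^\varepsilon)+\tfrac12\partial_{xx}(a_t^\varepsilon\xi_t^\varepsilon)$ with \emph{no} error term. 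Since $a^\varepsilon$ is smooth with bounded second derivative, $\sqrt{a^\varepsilon}$ is Lipschitz on $[0,1]$, so the SDE $dX_t^\varepsilon=b_t^\varepsilon(X_t^\varepsilon)\,dt+\sqrt{a_t^\varepsilon(X_t^\varepsilon)}\,dB_t$ has a unique strong solution with $\mathcal{L}(X_t^\varepsilon)=\xi_t^\varepsilon$; let $P^\varepsilon$ be its law on $C([0,T];[0,1])$.

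The second step is tightness of $\{P^\varepsilon\}$: the state space is compact and the coefficients are bounded uniformly in $\varepsilon$, so $\mathbb{E}|X_t^\varepsilon-X_s^\varepsilon|^4\le C|t-s|^2$ and Kolmogorov's criterion gives relative compactness; extract $P^\varepsilon\Rightarrow P$ along a subsequence. The time-$t$ marginal of $P$ is $\lim_\varepsilon\xi_t^\varepsilon=\xi_t$. The remaining, and genuinely delicate, step is to show that under $P$ the canonical process solves the martingale problem for $(G_u)_{u\in[0,T]}$. One cannot appeal to stability of martingale problems because $a,b$ are not continuous; instead one exploits the special form of the mollification by transferring $\eta_\varepsilon$ onto the (smooth) test function: for $\varphi\in\mathcal{C}^2([0,1])$ one has $\int\varphi'(x)\,b_t^\varepsilon(x)\,\xi_t^\varepsilon(dx)=\int(\varphi'\ast\check\eta_\varepsilon)(x)\,b_t(x)\,\xi_t(dx)$ with $\varphi'\ast\check\eta_\varepsilon\to\varphi'$ uniformly, and similarly for the $a^\varepsilon$--$\varphi''$ term; combined with the tightness estimates, used to handle the $\mathscr{F}_s$-measurable functional $\Phi$ in the identity $\mathbb{E}^{P^\varepsilon}[(\varphi(X_t)-\varphi(X_s)-\int_s^t G_u^\varepsilon\varphi(X_u)\,du)\,\Phi]=0$, this lets one pass to the limit and obtain $\mathbb{E}^{P}[(\varphi(X_t)-\varphi(X_s)-\int_s^t G_u\varphi(X_u)\,du)\,\Phi]=0$. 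I expect this passage to the limit, in the absence of any regularity of the coefficients, to be the main obstacle.

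Finally, this gives exactly the reciprocal statement announced above. If $\xi\in\DD([0,T],\mathcal{P}([0,1]))$ is a weak solution to the nonlinear IPDE (\ref{eq:PDEfull}), apply the theorem with the now-fixed bounded measurable coefficients $\tilde a(x,u):=a(x)$ and $\tilde b(x,u):=b(x,\xi_u)$; one obtains a process $f$ with $\mathcal{L}(f_t)=\xi_t$ solving the martingale problem for $G_u\varphi(x)=b(x,\xi_u)\varphi'(x)+\tfrac12a(x)\varphi''(x)$, which is precisely the mean-field martingale problem $(**)$. By the discussion of Section~\ref{sec:mckean}, $f$ is then a weak solution to the McKean--Vlasov SDE (\ref{eq:SDEfull}) with $\mathcal{L}(f_0)=\xi_0$.
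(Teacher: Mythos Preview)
The paper does not prove this theorem at all: it is stated as an imported result, with the attribution ``see Theorem~2.5 of \cite{superposition_principle}'' in the header, and is then immediately applied. You correctly identify this in your opening sentence. There is therefore no ``paper's own proof'' to compare against; what the paper does with the theorem is exactly your final paragraph---freeze $\xi$ in the coefficient $b(\cdot,\xi_u)$, apply the superposition principle to the resulting linear Fokker--Planck equation, and recover a solution to the mean-field martingale problem $(**)$.

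Your sketch of the proof itself is the standard Figalli--Trevisan route and is sound in outline. Two points that would need care if you were to write it out in full: first, the mollification near the endpoints $0,1$ has to be set up so that the regularised diffusion actually stays in $[0,1]$ (or one works on $\mathbb{R}$ and recovers support on $[0,1]$ at the end from the marginals); second, the claim that $\sqrt{a^\varepsilon}$ is Lipschitz uses the Glaeser-type fact that a nonnegative $C^2$ function with bounded second derivative has Lipschitz square root, which is true but worth citing. The passage to the limit via transferring the mollifier onto the test function is indeed the crux, and you have the right mechanism.
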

\begin{rem}
Our notion of weak solution corresponds to the notion of narrowly-continuous weak solution in \cite{superposition_principle}. 
\end{rem}
Let $\xi_t$ be a weak solution to (\ref{eq:PDEfull}). Then in particular $\xi$ is a weak solution to the linear PDE
$$
\partial_t u_t(x) = -\partial_x (b(x,\xi_t)u_t(x)) + \frac{1}{2}\partial_{xx}(a(x)u_t(x))
\qquad;\qquad u_0=m_0
$$
We may therefore apply the superposition principle to $\xi$ to find a solution $(f_t)_{t\in[0,T]}$ to the martingale problem 
\begin{center}
     $\forall \varphi\in\mathcal{C}^2_c(\RR),\quad \varphi(f_t)-\varphi(f_0)-\int_0^t\dd u\,G_{\xi_u}\varphi(f_u)$ is a martingale in the filtration of $f$
\end{center}
such that $f_t$ has law $\xi_t$. It follows that $f_t$ is a solution to $(**)$.

This shows that if we prove uniqueness of the solutions to $(**)$, we will automatically get uniqueness of solutions to (\ref{eq:PDEfull}).
 
\subsection{Uniqueness of solutions to Wright-Fisher-type McKean-Vlasov SDE }
There is a large literature on McKean-Vlasov diffusions, but Wright-Fisher-type SDEs such as ours require special handling because the Brownian coefficient is degenerate and not Lipschitz. This motivates the next theorem.

We metrize the weak topology on $\mathcal{P}(\RR)$ with the total variation distance (see \cite{chaintron2021propagation1}, proposition 4)
$$\forall \xi,\zeta\in \mathcal{P}(\RR), \ \quad 
\mathcal{D}_1(\xi,\zeta):=\sup_{||\varphi||_\infty \leq 1} |<\xi-\zeta,\varphi>|
$$
with $\left<\xi,\varphi\right>=\int_0^1 \varphi(x)\xi(\dd x)$, where the supremum is over all measurable functions $\varphi$ bounded by 1.

\begin{prop}\label{prop:uniqueness}
    Consider three bounded measurable function $a,c,\tilde{\Theta}:\RR\to \RR_+$ and a Lipschitz function  $\tilde{s}:\mathcal{P}(\RR)\to \RR$, and define $b(x,\zeta_t):=\sqrt{a(x)}c(x)\tilde{s}(\zeta_t) + \tilde{\Theta}(x)$. Assume that for any fixed $\zeta\in \DD([0,T],\mathcal{P}(\RR))$, the linear martingale problem (*) admits a unique weak solution.
    Then the McKean-Vlasov SDE (\ref{eq:SDEfull}) admits at most one weak solution and the non-linear PDE (\ref{eq:PDEfull}) admits at most one weak solution.
\end{prop}
\begin{proof}
We prove uniqueness of solutions by making a Girsanov transform, followed by Gr\"onwall's Lemma. This sort of proof was already used in \cite{girsanovuniqueness}.
 
Let us consider two solutions to the Martingale problem $(**)$ with initial distribution $m_0$ and denote by $\xi$ and $\zeta$ their respective laws.
We can construct two weak solutions $(f^\xi, B^{\xi})$ and $(f^\zeta, B^{\zeta})$ to the McKean-Vlasov SDE (\ref{eq:SDEfull}). 
 Consider a test function $\varphi$. We wish to bound the distance
$$ | <\xi_t-\zeta_t,\varphi> | = \left|\EE\left[\varphi(f^\xi_t)\right] - \EE\left[\varphi(f^\zeta_t)\right]\right|$$
and show that it must be $0$.

\textbf{Step 1: Girsanov transform.}
Consider the classical Cameron-Martin-Girsanov change of measure
\begin{align*}
    M_t:=&\int_0^t \;\frac{b(f^\zeta_u,\xi_u)-b(f^\zeta_u,\zeta_u)}{\sqrt{a(f^\zeta_u)}}\dd B_u^\zeta \\
        \frac{\dd\QQ}{\dd\PP}\quad :=&\quad \exp[M_T-\frac{1}{2}\braket{M}_T^{QV}]
\end{align*}
where $\braket{\cdot}^{QV}$ is the quadratic variation. Since for any $x\in \RR$, $\frac{b(x,\xi_u)-b(x,\zeta_u)}{\sqrt{a(x)}}= c(x)\left(\tilde{s}(\xi_u)-\tilde{s}(\zeta_u)\right)$ is bounded, $\QQ$ is well-defined since $M_t$ satisfies Novikov's condition.

Theorem 6.4.2 of \cite{stroock1997multidimensional}  implies that $f^\zeta$ under $\QQ$ satisfies the martingale problem
\begin{center}
$\forall \varphi\in\mathcal{C}^2_c(\RR),\quad \varphi(f_t)-\varphi(f_0)-\int_0^t\dd u\,G_{\xi_u}\varphi(f_u)$ is a martingale with $\mathscr{L}(f_0)=m_0$
\end{center}
Since we assumed that this linear martingale problem has a unique solution, it follows that $f^\zeta$ under $\QQ$ has the same law as $f^\xi$ under $\PP$. We thus obtain
$$\EE\left[\varphi(f^\xi_t)\right] = {\QQ}\left[\varphi(f^\zeta_t)\right] =
\EE\left[\varphi(f^\zeta_t)e^{M_t-\frac{1}{2}\braket{M}_t^{QV}}\right].$$

\textbf{Step 2: Gr\"onwall's Lemma.} We now alleviate notations by writing $f^\zeta\equiv f$.
From the previous discussion,
$$| <\xi_t-\zeta_t,\varphi> |=\left|\EE\left[\varphi(f_t)e^{M_t-\frac{1}{2}\braket{M}_t^{QV}}\right] - \EE\left[\varphi(f_t)\right]\right|$$
Write $\mathcal{E}_t:= e^{M_t-\frac{1}{2}\braket{M}_t^{QV}}$. We have
\begin{align*}
    |<\xi_t-\zeta_t,\varphi>| =& \left|\EE\left[\varphi(f_t) \left(\mathcal{E}_t - 1\right)\right]\right|\\
    \leq& ||\varphi||_\infty\;\EE\left[\left|e^{M_t-\frac{1}{2}\braket{M}_t^{QV}}-1\right|\right]\\
    =& ||\varphi||_\infty\; \EE\left[ \left|\int_0^t \dd M_u\; \mathcal{E}_u\right|\right]\\
    =& ||\varphi||_\infty\; \EE\left[ \left|\int_0^t \dd B_u\; c(f_u)(\tilde{s}(\xi_u)-\tilde{s}(\zeta_u))\mathcal{E}_u\right|\right]\\
    \leq& ||\varphi||_\infty\; \EE\left[ \left(\int_0^t \dd B_u\; c(f_u)(\tilde{s}(\xi_u)-\tilde{s}(\zeta_u))\mathcal{E}_u\right)^2\right]^{\frac{1}{2}}\\
    \leq& ||\varphi||_\infty \left(\EE\left[ \int_0^t \dd u\; c(f_u)^2\left(\tilde{s}(\xi_u)-\tilde{s}(\zeta_u)\right)^2\mathcal{E}_u^2\right]\right)^{\frac{1}{2}}\\
    \leq& C_{\tilde{s}}||\varphi||_\infty\;||c||_\infty\left(\int_0^t \dd u \; \mathcal{D}_1(\xi_u,\zeta_u)^2\EE\left[\mathcal{E}_u^2\right]\right)^{\frac{1}{2}}
\end{align*}
where $C_{\tilde{s}}$ is the Lipschitz constant of $\tilde{s}$. We used in the third line that $\mathcal{E}_t$ is the exponential martingale associated with $M_t$, and in the fifth line we used the Cauchy-Schwarz inequality. Since $\EE\left[\mathcal{E}_u^2\right]$ has uniform bounds on $[0,T]$,
we thus obtain
$$<\xi_t-\zeta_t,\varphi>^2\;\leq ||\varphi||_\infty^2C^2\int_0^t \dd u\; \mathcal{D}_1(\xi_u,\zeta_u)^2$$
for some constant $C>0$. Taking the supremum over $||\varphi||_\infty\leq 1$ yields
$$\mathcal{D}_1(\xi_t,\zeta_t)^2\;\leq C\int_0^t \dd u\; \mathcal{D}_1(\xi_u,\zeta_u)^2$$

The result follows from Gr\"onwall's lemma.
\end{proof}

We obtain as a Corollary the well-posedness of (\ref{eq:McKean-limitW}).
\begin{cor}\label{cor:uniqueness}
For $m_0\in \mathcal{P}([0,1])$, there exists a unique weak solution  to the McKean Vlasov problem (\ref{eq:McKean-limitW})
    \begin{gather*}
\dd f_t \ = \ \overline{s}(\mathscr{L}(f_t))  \  f_t(1-f_t) \dd t +
    \overline{\Theta}(f_t)\dd t
    +\sqrt{f_t(1-f_t)}\;\dd B_t \qquad;\qquad \mathscr{L}(f_0) = m_0
\end{gather*}
In particular $\mathscr{L}((f_t)_{t\in [0,T]})$ is a weak solution to the IPDE (\ref{eq:McKean-limitPDE}) on $[0,1]$
\begin{gather*}
\partial_t u_t(x) = -\partial_x\left[\left(\overline{s}(u_t(\cdot))x(1-x) + \overline{\Theta}(x)\right)u_t(x)\right]
+ \frac{1}{2}\partial_{xx} \left(x(1-x)u_t(x)\right)
\end{gather*}
with initial condition $m_0$.
\end{cor}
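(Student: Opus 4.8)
The plan is to read uniqueness off Theorem~\ref{thm:uniqueness} and to supply existence by a fixed point argument. First I would match~(\ref{eq:McKean-limitW}) to the framework of Theorem~\ref{thm:uniqueness}: take $a(x)=x(1-x)$, $c(x)=\sqrt{x(1-x)}$ (so that $\sqrt{a(x)}\,c(x)=x(1-x)$), $\tilde\Theta=\overline{\Theta}$, and $\overline{s}(\zeta)=2U'(\langle\zeta,2\,\mathrm{Id}-1\rangle)$, so that $b(x,\zeta)=\overline{s}(\zeta)\,x(1-x)+\overline{\Theta}(x)$ is precisely the drift of~(\ref{eq:McKean-limitW}). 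Here $a$ and $c$ are bounded, continuous and nonnegative and $\overline{\Theta}$ is bounded and continuous; and since $U$ is a polynomial of degree at most $2$, $U'$ is affine and $\langle\zeta,2\,\mathrm{Id}-1\rangle=2\langle\zeta,\mathrm{Id}\rangle-1\in[-1,1]$, so $\overline{s}$ is bounded, and, using $|\langle\xi-\zeta,\mathrm{Id}\rangle|\le\sqrt2\,\mathcal{D}_2(\xi,\zeta)$, it is Lipschitz for the distance $\mathcal{D}_2$. The one formal discrepancy with Theorem~\ref{thm:uniqueness} is that $\overline{\Theta}$ need not be nonnegative, but the sign of $\tilde\Theta$ is never used in that proof, as $\tilde\Theta$ cancels in $b(\cdot,\xi)-b(\cdot,\zeta)$. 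I would also record the structural simplification that the interaction enters $b$ only through the scalar mean: with $m(\zeta):=\langle\zeta,\mathrm{Id}\rangle$ we have $\overline{s}(\mathcal{L}(f_t))=2U'(2\,\EE[f_t]-1)$, which turns the McKean--Vlasov equation into a scalar fixed point problem.

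Next I would verify the remaining hypothesis of Theorem~\ref{thm:uniqueness}, namely well-posedness of~(\ref{eq:toySDE}) for each fixed $\zeta\in\DD([0,T],\mathcal{P}([0,1]))$. Existence of a $[0,1]$-valued weak solution is granted by Theorem~6.1.6 of~\cite{stroock1997multidimensional}, as already invoked in Section~\ref{sec:mckean}. For uniqueness in law I would split the drift into $\overline{\Theta}$, which is affine hence Lipschitz, and $\overline{s}(\zeta_t)\,x(1-x)$, which is bounded: the neutral Wright--Fisher equation with mutation $dX_t=\overline{\Theta}(X_t)\,dt+\sqrt{X_t(1-X_t)}\,dB_t$ has pathwise uniqueness by the Yamada--Watanabe criterion, the diffusion coefficient being $\tfrac12$-H\"older, hence uniqueness in law; and adding the bounded drift $\overline{s}(\zeta_t)\,x(1-x)$ by a Cameron--Martin--Girsanov change of measure — whose exponential has the bounded integrand $\overline{s}(\zeta_t)\sqrt{X_t(1-X_t)}$ and is therefore a true martingale — preserves uniqueness in law. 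I expect this to be the main obstacle, not because it is deep but because the diffusion coefficient degenerates on the boundary of $[0,1]$, so the Girsanov argument must be set up with the drift split in exactly this way; it is then entirely classical. Given it, Theorem~\ref{thm:uniqueness} yields that~(\ref{eq:McKean-limitW}) has at most one weak solution and~(\ref{eq:McKean-limitPDE}) at most one continuous weak solution.

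For existence I would run a Banach fixed point on the complete metric space $\mathcal{C}([0,T],[0,1])$. For $m\in\mathcal{C}([0,T],[0,1])$, let $f^m$ be the law-unique solution of~(\ref{eq:toySDE}) with drift $2U'(2m_t-1)\,f(1-f)+\overline{\Theta}(f)$ and $\mathcal{L}(f^m_0)=\xi_0$ from the previous step, and set $\Psi(m)_t:=\EE[f^m_t]$. Since $f^m_t\in[0,1]$ and the martingale part of $f^m$ has zero mean, $\Psi(m)$ is $[0,1]$-valued and Lipschitz in $t$ with a constant independent of $m$, so $\Psi$ maps $\mathcal{C}([0,T],[0,1])$ into itself. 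Re-running the Girsanov step of the proof of Theorem~\ref{thm:uniqueness}, now comparing the two drifts generated by $m^1$ and $m^2$ (whose difference $2\sqrt{f(1-f)}\,(U'(2m^1_u-1)-U'(2m^2_u-1))$ is bounded), followed by Cauchy--Schwarz, the uniform $L^2$ bound on the Girsanov exponential and the Lipschitz property of $U'$, gives $(\Psi(m^1)_t-\Psi(m^2)_t)^2\le C\int_0^t(m^1_u-m^2_u)^2\,du$ on $[0,T]$ with $C=C(T,U,\theta_1,\theta_2)$. Iterating this bound shows $\Psi^{(n)}$ is a strict contraction for $n$ large, so $\Psi$ has a fixed point $m^\ast$; for it $2U'(2m^\ast_t-1)=\overline{s}(\mathcal{L}(f^{m^\ast}_t))$, so $f^{m^\ast}$ solves the mean-field martingale problem $(**)$ and, via the construction of Section~\ref{sec:mckean}, gives a weak solution of~(\ref{eq:McKean-limitW}) started from $\xi_0$. (Without the scalar reduction the same argument runs for $\Phi:\zeta\mapsto\mathcal{L}(f^\zeta)$ on $\mathcal{C}([0,T],\mathcal{P}([0,1]))$ with the sup-$\mathcal{D}_2$ metric, using $\mathcal{D}_2(\Phi(\zeta^1)_t,\Phi(\zeta^2)_t)^2\le C\int_0^t\mathcal{D}_2(\zeta^1_u,\zeta^2_u)^2\,du$, obtained by summing the per-test-function bounds as at the end of that proof.)

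Finally, $f^{m^\ast}$ has continuous paths in the bounded set $[0,1]$, so $t\mapsto\mathcal{L}(f^{m^\ast}_t)$ is weakly continuous, and It\^o's formula (as recalled in Section~\ref{sec:weaksolutionIPDE}) shows that $(\mathcal{L}(f^{m^\ast}_t))_{t\in[0,T]}$ is a weak solution of~(\ref{eq:McKean-limitPDE}) with initial datum $\xi_0$. Uniqueness of the weak solution of the SDE, and of the continuous weak solution of the IPDE, is then precisely Theorem~\ref{thm:uniqueness}; everything else reduces to the Girsanov--Gr\"onwall estimate already carried out there together with a routine iteration.
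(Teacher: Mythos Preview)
Your proof is correct. The uniqueness part matches the paper's exactly: apply Theorem~\ref{thm:uniqueness} with $a(x)=x(1-x)$, $c(x)=\sqrt{x(1-x)}$, $\tilde\Theta=\overline\Theta$, and use that $\zeta\mapsto\langle\zeta,2\,\mathrm{Id}-1\rangle$ is Lipschitz for $\mathcal{D}_2$. You also spell out the well-posedness of the frozen equation~(\ref{eq:toySDE}) via Yamada--Watanabe plus Girsanov, which the paper leaves implicit.

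The genuine difference is in existence. The paper does not prove existence directly in the corollary: it defers it to the convergence argument of Theorem~\ref{thm:strongcvgrho}, where tightness of $(\mu_{\mathbf{X}_t})_{t\in[0,T]}$ plus identification of the limit martingale problem yields a weak solution of~(\ref{eq:McKean-limitW}) as a byproduct of the main result. You instead give a self-contained Banach fixed point, exploiting the scalar reduction $\overline{s}(\mathcal{L}(f_t))=2U'(2\,\EE[f_t]-1)$ to work on $\mathcal{C}([0,T],[0,1])$ and reusing the Girsanov--Gr\"onwall estimate from the uniqueness proof to get the iterated contraction. Your route is more elementary and makes the corollary logically independent of Section~\ref{sec:StrongRec}; the paper's route is economical in context since the tightness and limit identification are needed anyway for the main theorem, so existence comes for free.
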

\begin{proof}
    Existence will be obtained from a convergence argument in the next section (see Theorem \ref{thm:strongcvgrho}).
    
    Let $a(x)=c(x)^2=x(1-x)\mathbf{1}_{[0,1]}(x)$ and $\tilde{\Theta}=\overline{\Theta},\tilde{s}=\overline{s}$ in Proposition \ref{prop:uniqueness}. Note that for $\xi\in\mathcal{P}([0,1])$, $\xi\mapsto <\xi,2\mbox{Id}-1>$ is Lipschitz. Finally, note that for any fixed $\zeta\in\DD([0,T],\mathcal{P}([0,1]))$, the equation 
$$
\dd f_t \ = \ \overline{s}(\zeta_t)  \  f_t(1-f_t) \dd t +
    \overline{\Theta}(f_t)\dd t
    +\sqrt{f_t(1-f_t)}\;\dd B_t \qquad;\qquad \mathscr{L}(f_0) = m_0
$$    
is a Wright-Fisher diffusion with a unique weak solution.
    
    We may therefore apply Proposition \ref{prop:uniqueness}. 
    The IPDE (\ref{eq:McKean-limitPDE}) is the Fokker-Planck equation associated with (\ref{eq:McKean-limitW}). The uniqueness of its solution is given by the discussion in Section \ref{sec:weaksolutionIPDE}.
\end{proof}

\subsection{Stationary distribution for symmetric quadratic selection \label{sec:statio}}
We focus on symmetric quadratic selection.
\begin{cor}\label{cor:statio}
Assume $\theta^{(+)},\theta^{(-)}>0$ and 
    consider the case of symmetric quadratic selection
    $$\overline{s}(\xi)=-2\kappa<\xi,2\mbox{Id}-1>$$    
for some parameter $\kappa\in\RR$.
Then
\begin{itemize}
    \item (stabilizing selection) Suppose $\kappa \geq 0$. Then $\chi$ has a unique fixed point at 0.
    \item (disruptive selection) Assume $\theta^{(+)}=\theta^{(-)}$. Let $\kappa_c := -\frac{4\theta^{(+)}+1}{2}$. Then we can find $\delta>0$ such that if $\kappa\in(\kappa_c-\delta,\kappa_c]$, $\chi$ has at least three fixed points.
\end{itemize}    
\end{cor}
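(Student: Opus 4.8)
The plan is to reduce everything to the analysis of a single scalar equation $\chi(y) = y$ where, for quadratic selection, $\chi(y) = -2\omega\langle \Pi_y, 2\mathrm{Id}-1\rangle = -2\omega(2m(y) - 1)$ with $m(y) := \int_0^1 x\,\Pi_y(dx)$ the mean of the Beta-type distribution $\Pi_y(dx) = C_y x^{2\theta_1-1}(1-x)^{2\theta_2-1}e^{2xy}dx$. So I would first record the key analytic properties of $m$: it is smooth in $y$, strictly increasing (since $\mathrm{Var}_{\Pi_y} > 0$ and $\frac{d}{dy}m(y) = 2\,\mathrm{Var}_{\Pi_y}(x) > 0$ by a standard exponential-family computation), bounded in $(0,1)$, and — crucially — odd around $1/2$ in the symmetric case $\theta_1 = \theta_2$, i.e. $m(-y) = 1 - m(y)$, which follows from the change of variables $x \mapsto 1-x$. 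Equivalently, setting $\phi(y) := 2m(y) - 1$, we have $\phi$ smooth, strictly increasing, odd, bounded in $(-1,1)$, and the fixed-point equation becomes $y = -2\omega\,\phi(y)$, i.e. we are looking for intersections of the line $y \mapsto y$ with $y \mapsto -2\omega\phi(y)$.

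For the stabilizing case $\omega \geq 0$: then $-2\omega\phi$ is non-increasing (strictly decreasing if $\omega > 0$), so it meets the strictly increasing identity in exactly one point; and since $\phi(0) = 0$ by oddness, that point is $y = 0$. (For $\omega = 0$ the map $\chi$ is identically $0$ and $0$ is trivially the unique fixed point.) For the disruptive case $\theta_1 = \theta_2$, $\omega < 0$: now $-2\omega\phi$ is strictly increasing, and $y=0$ is always a fixed point by oddness. The question is whether additional symmetric pairs $\pm y^*$ appear. The mechanism is a pitchfork bifurcation governed by the slope at the origin: the origin is the unique fixed point as long as $-2\omega\phi'(0) < 1$, and extra fixed points appear once $-2\omega\phi'(0) > 1$, i.e. once $\omega < -\tfrac{1}{2\phi'(0)}$. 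So I would compute $\phi'(0) = 2m'(0) = 4\,\mathrm{Var}_{\Pi_0}(x)$, where $\Pi_0$ is the symmetric Beta$(2\theta_1, 2\theta_1)$ distribution, whose variance is $\frac{1}{4(4\theta_1+1)}$; hence $\phi'(0) = \frac{1}{4\theta_1+1}$ and the critical value is $\omega_c = -\frac{4\theta_1+1}{2}$, matching the statement.

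It remains to turn "slope exceeds $1$" into "at least three fixed points" rigorously and to get the claimed neighborhood $(\omega_c - \delta, \omega_c]$. Here is the subtlety I expect to be the main obstacle: at exactly $\omega = \omega_c$ the curve $-2\omega_c\phi$ is tangent to the identity at the origin ($-2\omega_c\phi'(0) = 1$), so whether there are one or three fixed points there depends on the sign of $\phi'''(0)$ (the cubic term in the odd expansion $\phi(y) = \phi'(0)y + \frac{1}{6}\phi'''(0)y^3 + \cdots$). I would compute $\phi'''(0)$ via the exponential-family cumulant structure — it is proportional to the fourth cumulant of $\Pi_0$, and for a symmetric Beta that is negative, i.e. $\phi$ is concave on $(0,\infty)$ near $0$ in the sense $\phi'''(0) < 0$. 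This makes the bifurcation a supercritical pitchfork from the side $\omega < \omega_c$: for $\omega$ slightly below $\omega_c$, by continuity $g_\omega(y) := -2\omega\phi(y) - y$ satisfies $g_\omega'(0) > 0$ while $g_\omega(y) \to -2\omega(-1) - y \to -\infty$ ... wait, for $\omega < 0$, $-2\omega\phi(y) \to -2\omega < 0$ as $y \to -\infty$ is bounded, so $g_\omega(y) = -2\omega\phi(y) - y \to +\infty$ as $y \to -\infty$ and $\to -\infty$ as $y \to +\infty$; combined with $g_\omega(0) = 0$ and $g_\omega'(0)>0$, a sign analysis on $(0,\infty)$ using $g_\omega(0)=0$, $g_\omega > 0$ just right of $0$, and $g_\omega \to -\infty$ gives a positive root, and by oddness a matching negative root, for a total of at least three. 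The one thing to be careful about: at $\omega = \omega_c$ itself the root at $0$ is degenerate, but $\phi'''(0) < 0$ forces $g_{\omega_c}(y) < 0$ for small $y > 0$, so one must check the extra roots still exist at $\omega = \omega_c$ — in fact they do, sitting at a finite distance from $0$, which is why the interval is closed at $\omega_c$; a compactness/continuity argument (the roots vary continuously and cannot escape to infinity since everything is bounded) delivers a uniform $\delta > 0$.

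Thus the proof plan is: (i) identify the fixed-point equation with $y = -2\omega\phi(y)$ and establish monotonicity, boundedness and oddness of $\phi$; (ii) handle $\omega \geq 0$ by a monotonicity argument; (iii) compute $\phi'(0) = \frac{1}{4\theta_1+1}$ from the variance of the symmetric Beta to locate $\omega_c$; (iv) compute the sign of $\phi'''(0)$ (negative) to determine the bifurcation direction; (v) run an intermediate-value-theorem plus continuity argument to produce the three fixed points on a half-neighborhood $(\omega_c - \delta, \omega_c]$. The main obstacle is step (iv)–(v): nailing the sign of the third derivative (equivalently the fourth cumulant of the symmetric Beta) and organizing the continuity argument so that the extra roots persist up to and including $\omega_c$, giving the closed endpoint.
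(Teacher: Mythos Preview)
Your approach is essentially the paper's: rewrite $\chi$ via the exponential-family/cumulant structure of $\{\Pi_y\}$, use monotonicity (positivity of the variance) for the stabilizing case, and in the symmetric case Taylor-expand $\chi(y)-y$ at $0$, identifying $\chi'(0)=\omega/\omega_c$ from $\mathrm{Var}(\mathrm{Beta}(2\theta_1,2\theta_1))=\tfrac{1}{4(4\theta_1+1)}$ and using negativity of the fourth cumulant of the symmetric Beta for the cubic term. The paper writes this through the cumulant generating function $F$ of $\Pi_0$ (your $m(y)=F'(2y)$), but the computations are identical.

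One overreach: at $\omega=\omega_c$ you assert the extra roots ``sit at a finite distance from $0$''. The Taylor analysis gives $\chi(y)-y=-by^3+o(y^3)$ with $b>0$ there, so the bifurcating roots collapse into the triple root at $0$; nothing in the local argument produces distinct nonzero roots at $\omega_c$, and your global sign check ($g_{\omega_c}(0)=0$, $g_{\omega_c}<0$ for small $y>0$, $g_{\omega_c}\to-\infty$) does not force an additional crossing. The paper is equally terse here --- it writes the cubic expansion and says ``the result follows'' --- so the closed endpoint in the stated interval should be read either with multiplicity or as a minor imprecision, not as something your argument is expected to supply.

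A smaller point: for the stabilizing bullet you invoke oddness to place the fixed point at $0$, which tacitly assumes $\theta_1=\theta_2$. The paper's proof of that bullet only argues uniqueness from monotonicity and does not justify ``at $0$'' either; indeed for $\theta_1\neq\theta_2$ and $\omega>0$ the fixed point is not $0$.
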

\begin{proof}
Recall the definition of $\Pi_y$ in (\ref{def:Piy}) and the definition of $\chi$ in (\ref{eq:tmpchi}).
Let  $F$ be the cumulant generating function of $\Pi_0$
$$
F(y) := \ln(<\Pi_0, \exp(y\;\mbox{Id})>)
$$
A quick computation shows that $\chi(y)=-2\kappa(2F'(2y)-1)$. In particular
\begin{equation}\label{eq:chi'}
\chi'(y) = -8\kappa F''(2y)
\end{equation}
It is also easy to see that $F''(2y)$ is the variance of $\Pi_y$, and in particular is positive.
In the $\kappa\geq 0$ case, we get that $\chi$ is non-increasing. In particular it will have a single fixed point. 

\medskip

We turn to the case $\theta^{(+)}=\theta^{(-)}$. First, notice that by symmetry we will always have $\overline{s}(\Pi_0) = 0$. 
In particular, for any $\kappa$, 0 is a fixed point of $\chi$.
Further,
\begin{equation}\label{eq:taylorchi}
\chi(y) = y\chi'(0) 
+ \frac{y^2}{2}\chi''(0)
+ \frac{y^3}{6}\chi'''(0)
+ o(y^3)
\end{equation}
The variance of $\Pi_0$ is $F''(0)=\frac{1}{4(4\theta^{(+)}+1)}$.  From (\ref{eq:chi'}) we find
$$
\chi'(0) = \frac{\kappa}{\kappa_c}
$$
We similarly compute 
$$
\chi''(0) = -16\kappa\; F'''(0) \qquad;\qquad
\chi'''(0) = -32\kappa F''''(0)
$$
Since $F'''(0)$ is the skew of $\Pi_0$, it is 0 by symmetry. Recall $F''''(0)$ is the fourth cumulant of $\Pi_0$. This can be seen to be negative for symmetric Beta distributions. We then rewrite (\ref{eq:taylorchi}) as
$$
\chi(y) - y = y\frac{\kappa-\kappa_c}{\kappa_c}
- \frac{y^3}{6}\kappa32F''''(0)
+ o(y^3)
$$
When $\kappa\leq \kappa_c$, the two terms on the right-hand side are of opposite signs. The result follows.
\end{proof}

\section{Convergence to the McKean-Vlasov SDE under strong recombination \label{sec:StrongRec}}
In this section we prove Theorem \ref{thm:strongcvgrho} (in Section \ref{sec:cclstrongrec}), and  Theorem \ref{thm:geneticvariance} (in Section \ref{sec:phenotype}). We start with an outline of the main step of the proofs.

\subsection{Heuristics and outline of the proof} \label{sec:linkeq}
Recall that we consider a multidimensional SDE of the form
$$
\dd {\bf X}_t \ = \ (\rho R({\bf X}_t) + \Theta({\bf X}_t) + L S({\bf X}_t))\dd t + \Sigma(\mathbf{x})\dd\mathbf{B}_t
$$

\bigskip

{\bf Step 1.} For every $\mathbf{x}\in \XX^{[L]}$, define $\pi(\mathbf{x})$ as
$$
    \pi(\mathbf{x}) := \bigotimes_{\ell\in [L]} \mathbf{x}^{\{\ell\}}
$$
    i.e., $\pi(\mathbf{x})$ is the product measure whose one dimensional marginals coincides with the ones of $\mathbf{x}$.
    We extend the definition of $\pi$ to $\bigcup_{A\subseteq [L]} \XX^A$, such that
    \begin{equation}\label{eq:defpi}
    \forall \mathbf{x}\in \XX^A, \ \ \  \pi(\mathbf{x}) := \bigotimes\limits_{\ell\in A} \mathbf{x}^{\{\ell\}}.
    \end{equation}
$\pi$ gives the attractors of the recombinator $R$ as the following Lemma shows. 
\begin{lem}\label{lem:defnolink}
    For $\mathbf{x}_0\in \XX^{[L]}$, define $\mathbf{x}_t$ to be the unique solution to
\begin{align*}
\frac{\dd\mathbf{x}_t}{\dd t} =& R(\mathbf{x}_t)
\end{align*}
with initial condition $\mathbf{x}_0$. 
Then, provided the recombination measure $\nu$ is non-degenerate, $\mathbf{x}_t$ converges to  $\pi(\mathbf{x}_0)$ as $t\to +\infty$.
\end{lem}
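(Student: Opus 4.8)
The plan is to exhibit a Lyapunov functional that decreases strictly along the flow of $R$ unless $\mathbf{x}_t$ is already a product measure, and to combine this with conservation of the one-dimensional marginals. First I would check that the marginals $\mathbf{x}_t^{\{\ell\}}$ are conserved: for any single locus $\ell$, summing the defining ODE over all genotypes agreeing on coordinate $\ell$ shows that $\frac{d}{dt}\mathbf{x}_t^{\{\ell\}}=0$, because for each $\emptyset\subsetneq\mathcal I\subsetneq[L]$ the term $\nu(\mathcal I)(\mathbf{x}^{\mathcal I}\otimes\mathbf{x}^{\mathcal I^c}-\mathbf{x})$ has vanishing $\{\ell\}$-marginal (the product measure $\mathbf{x}^{\mathcal I}\otimes\mathbf{x}^{\mathcal I^c}$ has the same one-locus marginals as $\mathbf{x}$, whether $\ell\in\mathcal I$ or $\ell\in\mathcal I^c$). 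Hence $\mathbf{x}_t^{\{\ell\}}\equiv\mathbf{x}_0^{\{\ell\}}$ for all $\ell$, so $\pi(\mathbf{x}_t)=\pi(\mathbf{x}_0)$ is constant, and any limit point of $\mathbf{x}_t$ has the same one-dimensional marginals as $\mathbf{x}_0$.

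Next I would introduce the relative entropy $H(\mathbf{x}_t):=\sum_{\gamma\in\square_{[L]}} x_t(\gamma)\ln\!\frac{x_t(\gamma)}{\pi(\mathbf{x}_0)(\gamma)}$, i.e.\ the Kullback--Leibler divergence of $\mathbf{x}_t$ from the fixed product measure $\pi(\mathbf{x}_0)$ (after first noting that the support of $\mathbf{x}_t$ is controlled — if $\mathbf{x}_0$ has full support this persists; in general one works on the relevant face of the simplex, or uses a different convex Lyapunov function to avoid support issues). Differentiating along the flow, $\frac{d}{dt}H(\mathbf{x}_t)=\sum_\gamma(1+\ln x_t(\gamma)-\ln\pi(\mathbf{x}_0)(\gamma))\,\dot x_t(\gamma)$, and since $\sum_\gamma\dot x_t(\gamma)=0$ the constant term drops; using $R(\mathbf{x})=\sum_{\mathcal I}\nu(\mathcal I)(\mathbf{x}^{\mathcal I}\otimes\mathbf{x}^{\mathcal I^c}-\mathbf{x})$ and the marginal-preservation property, $\sum_\gamma\ln\pi(\mathbf{x}_0)(\gamma)\cdot(\mathbf{x}^{\mathcal I}\otimes\mathbf{x}^{\mathcal I^c}-\mathbf{x})(\gamma)=0$ term by term as well. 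One is left with $\frac{d}{dt}H(\mathbf{x}_t)=\sum_{\mathcal I}\nu(\mathcal I)\sum_\gamma\ln x_t(\gamma)\,\big((\mathbf{x}_t^{\mathcal I}\otimes\mathbf{x}_t^{\mathcal I^c})(\gamma)-x_t(\gamma)\big)$, and each summand is $\le 0$: it equals $-\mathrm{KL}\big(\mathbf{x}_t\,\|\,\mathbf{x}_t^{\mathcal I}\otimes\mathbf{x}_t^{\mathcal I^c}\big)\le 0$ after recognizing that $\sum_\gamma\ln x_t(\gamma)(\mathbf{x}_t^{\mathcal I}\otimes\mathbf{x}_t^{\mathcal I^c})(\gamma)=\sum_\gamma(\mathbf{x}_t^{\mathcal I}\otimes\mathbf{x}_t^{\mathcal I^c})(\gamma)\ln x_t(\gamma)\le\sum_\gamma(\mathbf{x}_t^{\mathcal I}\otimes\mathbf{x}_t^{\mathcal I^c})(\gamma)\ln(\mathbf{x}_t^{\mathcal I}\otimes\mathbf{x}_t^{\mathcal I^c})(\gamma)$ by Gibbs' inequality. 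So $H$ is non-increasing, and this is an essentially known fact about the recombinator (cf.\ \cite{entropy_production,Baakebaake}).

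Finally I would run a LaSalle-type argument: $H$ is bounded below, so $\frac{d}{dt}H(\mathbf{x}_t)\to 0$ along a subsequence, forcing $\mathrm{KL}(\mathbf{x}_{t}\,\|\,\mathbf{x}_{t}^{\mathcal I}\otimes\mathbf{x}_{t}^{\mathcal I^c})\to 0$ for every $\mathcal I$ with $\nu(\mathcal I)>0$. By compactness of $\XX^{[L]}$ extract a limit point $\mathbf{x}_\infty$; by continuity it satisfies $\mathbf{x}_\infty=\mathbf{x}_\infty^{\mathcal I}\otimes\mathbf{x}_\infty^{\mathcal I^c}$ for all such $\mathcal I$. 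The non-degeneracy hypothesis $\inf_{\ell_1\neq\ell_2}r^{\{\ell_1,\ell_2\}}>0$ guarantees that for every pair of loci there is some $\mathcal I$ in the support of $\nu$ separating them; an induction on subsets (using that $\mathbf{x}=\mathbf{x}^{\mathcal I}\otimes\mathbf{x}^{\mathcal I^c}$ together with $\mathbf{x}^{\mathcal I}=$ product propagates factorization) then shows $\mathbf{x}_\infty=\bigotimes_\ell \mathbf{x}_\infty^{\{\ell\}}=\bigotimes_\ell\mathbf{x}_0^{\{\ell\}}=\pi(\mathbf{x}_0)$. Since $\pi(\mathbf{x}_0)$ is the unique limit point and the trajectory lives in a compact set, $\mathbf{x}_t\to\pi(\mathbf{x}_0)$. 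I expect the main obstacle to be the combinatorial step deducing full factorization from pairwise/$\mathcal I$-wise factorization under the non-degeneracy assumption — one must be careful that $\nu(\mathcal I)=\nu(\mathcal I^c)$ and that the supported splits generate enough "separation" — and secondarily the bookkeeping needed to keep $H$ well-defined when $\mathbf{x}_0$ does not have full support.
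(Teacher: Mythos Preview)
The paper does not prove this lemma at all: it simply refers the reader to \cite{entropy_production}. Your proposal is precisely the entropy-production/Lyapunov argument developed in that reference (and in \cite{Baakebaake}), so you are reproducing the cited proof rather than giving an alternative one.

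Two small points are worth tightening. First, in the entropy computation the summand $\sum_\gamma \ln x_t(\gamma)\big((\mathbf{x}_t^{\mathcal I}\otimes\mathbf{x}_t^{\mathcal I^c})(\gamma)-x_t(\gamma)\big)$ does not \emph{equal} $-\mathrm{KL}(\mathbf{x}_t\|\mathbf{x}_t^{\mathcal I}\otimes\mathbf{x}_t^{\mathcal I^c})$ as you write; rather, using Gibbs' inequality together with the identity $\sum_\gamma(\mathbf{x}_t^{\mathcal I}\otimes\mathbf{x}_t^{\mathcal I^c})(\gamma)\ln(\mathbf{x}_t^{\mathcal I}\otimes\mathbf{x}_t^{\mathcal I^c})(\gamma)=\sum_\gamma x_t(\gamma)\ln(\mathbf{x}_t^{\mathcal I}\otimes\mathbf{x}_t^{\mathcal I^c})(\gamma)$ (additivity of the log of a product measure, same marginals), one gets that it is \emph{bounded above} by $-\mathrm{KL}(\mathbf{x}_t\|\mathbf{x}_t^{\mathcal I}\otimes\mathbf{x}_t^{\mathcal I^c})$, which is all you need. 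Second, the combinatorial step you flag does go through cleanly: writing $m_J:=\mathbf{x}_\infty[\prod_{\ell\in J}g^\ell]$, split-wise independence gives $m_J=m_{J\cap\mathcal I}\,m_{J\cap\mathcal I^c}$ for every $\mathcal I$ in the support of $\nu$; since non-degeneracy lets you separate any two loci in $J$ by some such $\mathcal I$, both pieces have strictly smaller cardinality, and an induction on $|J|$ yields $m_J=\prod_{\ell\in J}m_{\{\ell\}}$, hence $\mathbf{x}_\infty=\pi(\mathbf{x}_0)$.
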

\begin{proof}
We refer the reader to \cite{entropy_production} and the references therein.
\end{proof}
  
Under the strong recombination assumption, the driving force is recombination. From the previous result,  we may expect that 
$$
{\bf X}_t \ \approx \ \pi({\bf X}_t)
$$ 
so that the SDE should asymptotically diffuse on the stable manifold for the recombinator
$$
\Gamma^{[L]} := \{\mathbf{x}\in \XX^{[L]}\;|\quad \mathbf{x}=\pi({\bf x})\}.
$$
Biologically speaking, we expect the system to be at linkage equilibrium (LE) due to the overwhelming effect of recombination.

\bigskip

{\bf Step 2.} 
For $\ell\in[L]$, define 
$$
S^\ell({\bf x}) := S^{\{\ell\}}({\bf x})(+1)
$$
where we recall that $S^{\{\ell\}}$ is the marginal of $S$ on $\{\ell\}$.
We show in Corollary \ref{cor:SDEp(X)} that
\begin{equation*}
 \forall \ell\in[L], \ \   \dd p^\ell(\mathbf{X}_t) = \left(\overline{\Theta}(p^\ell(\mathbf{X}_t))+LS^\ell(\mathbf{X}_t)\right)\dd t + \sqrt{p^\ell(\mathbf{X}_t)(1-p^\ell(\mathbf{X}_t))}\dd\hat{B}_t^\ell
\end{equation*}
where $\hat B^{l}$ is a Brownian motion.
It is also easy to see that if ${\bf x}$ belongs to the LE manifold (see Lemma  \ref{lem:pibarom})
\begin{equation}\label{eq:s-vs-S}
L S^\ell({\bf x}) \approx \bar s(\mu_{{\bf x}})p^\ell({\bf x}) (1-p^\ell({\bf x}))
\end{equation}
It should follow that
\begin{equation*}
 \forall \ell\in[L], \ \   \dd p^\ell(\mathbf{X}_t) \approx \left(\overline{\Theta}(p^\ell(\mathbf{X}_t))+ \bar s(\mu_{{\bf X}_t})p^\ell(\mathbf{X}_t)(1-p^\ell(\mathbf{X}_t)))\right)\dd t + \sqrt{p^\ell(\mathbf{X}_t)(1-p^\ell(\mathbf{X}_t))}\dd\hat{B}_t^\ell \label{eq:one-dimensional-cor}
\end{equation*}
 
In order to derive a mean field approximation, it remains to prove 
that loci become decorrelated at the limit.
Define the linkage disequilibrium between $\ell_1\neq\ell_2\in[L]$ as
\begin{equation}
\label{def:D}
D^{\ell_1,\ell_2}({\bf x}) := \mathbf{Cov}_{\mathbf{x}}\left[\mathbbm{1}_{[g^{\ell_1}=+1]},\mathbbm{1}_{[g^{\ell_2}=+1]}\right] 
\end{equation}
where we recall that $\mathbf{Cov}_{\mathbf{x}}$ is the covariance of functionals of a random variable $g$ with law $\mathbf{x}$. We show (again in Corollary \ref{cor:SDEp(X)}) that
\begin{equation}
    \label{eq:D-LD}
\dd \left<p^{\ell_1}(\mathbf{X}),  p^{\ell_2}(\mathbf{X}) \right>_t^{QV} \ = \ D^{\ell_1,\ell_2}({\bf X}_t)\dd t = 0
\end{equation}
where the last equality holds provided that ${\bf X}_t$ is on the LE manifold $\Gamma^{[L]}$.
Putting everything together, if recombination is strong enough, we should expect a propagation of chaos principle to hold.

\bigskip

{\bf Technical ingredients.} The previous heuristics rely on the underlying assumption that $\mathbf{X}_t$ is on the LE manifold. Making this rigorous will raise one major  difficulty. We need to derive the conditions on the recombinator so that ${\bf X}_t$ remain close enough to the boundary so that the previous estimates remain valid.

The first step is the following proposition
which allows 
to justify (\ref{eq:s-vs-S}) and (\ref{eq:D-LD}) by controlling
\begin{equation}\label{eq:linkage-3}
Y^{A}_{t}:=||\mathbf{X}_t^{A}-\pi(\mathbf{X}_t^{A})||_2
\end{equation}
on every subset $A\subset [L]$ of size $2$ or $3$.

\begin{prop}\label{prop:boundDSwithL2}
We have
\begin{align}\label{eq:boundDSwithL2v1}
\forall \ell_1\neq \ell_2\in [L],\qquad\quad&
|D^{\ell_1,\ell_2}(\mathbf{x})|
\leq\quad ||\mathbf{x}^{\{\ell_1,\ell_2\}} - \pi(\mathbf{x}^{\{\ell_1,\ell_2\}})||_2\\
\forall \ell_0\in [L],\qquad\qquad&
\big|S^{\ell_0}(\mathbf{x})-S^{\ell_0}(\pi(\mathbf{x}))\big|\leq\quad
C\sum\limits_{\substack{A\subseteq [L]\smallsetminus \{\ell_0\}\\ 1\leq \#A\leq 2}}\frac{1}{L^{\#A}}
||\mathbf{x}^{\{\ell_0\}\cup A}-\pi(\mathbf{x}^{\{\ell_0\}\cup A})||_2 \label{eq:boundDSwithL2v2}
\end{align}
for some constant $C>0$ independent of $L$.
\end{prop}

To get control on (\ref{eq:linkage-3}), we will use the linearized recombinator $\nabla R$. The eigenvalues of $\nabla R$ were computed in \cite{Lyubich_1971} p. 107. We give a full spectral characterization of $\nabla R$ in Section \ref{sec:eigenrec}. 
In particular, we obtain that  the system is always attracted towards the LE manifold at a rate at least
\begin{equation}\label{eq:defrA}
    r_A:= \min_{\substack{\ell_1,\ell_2\in A\\ \ell_1\neq\ell_2}} r_{\{\ell_1,\ell_2\}}
\end{equation}
In Section \ref{sec:controlL2itself}, we use this estimate through a combination of It\^o's and Gr\"onwall's lemmas to get quantative bounds on  $(\ref{eq:linkage-3})$. In Section \ref{sec:cclstrongrec} we conclude the proof of Theorem \ref{thm:strongcvgrho} with standard martingale arguments to prove convergence of $(\mu_{\mathbf{X}_t})_{t\in[0,T]}$ and $(p^\ell(\mathbf{X}_t))_{t\in[0,T]}$. Finally,
we obtain Theorem \ref{thm:geneticvariance} in Section \ref{sec:phenotype}.

\begin{rem}
Katzenberger \cite{katzenberger1990solutions}  considered 
a generic SDE with a strong drift attracting the dynamics on an invariant manifold and derived a slow-fast principle for the stochastic evolution on the manifold. This sort of proof was already used in a population genetics context, in the case $\rho \to +\infty,L=2$ in \cite{ethier1979limit}. Our system presents two additional complexities. The first difficulty is that the dimension of the problem explodes exponentially with $L$ just as the strength of recombination becomes large. The second difficulty is that we not only need  $\mathbf{x} \approx \pi({\bf x})$, we actually need the difference to be small, of order $1/L$, because the strength of selection is of order $L$. We therefore require quantitative bounds on the linkage disequilibrium on any small set $A\subseteq [L]$. 
\end{rem}

\subsection{Evolution of the marginals\label{sec:evmarg}}
In this section we will derive the SDE for $p^\ell(\mathbf{X}_t)$. We will in fact study $\mathbf{X}_t^A$ for any $A\subseteq [L]$, of which $\{\ell\}$ is a special case. The reason why we need to study $\mathbf{X}_t^A$ for a general $A$ is because we will need to control the divergence from LE of $\mathbf{X}^A$ for small sets $A$ of size at most $3$.

Recall the definition of  $R,\Theta, \Sigma$
as defined in in the introductory Section \ref{sec:defmodel}.
For any subset $A$, we define the same quantity 
$\hat R^A,\hat \Theta^A, \hat \Sigma^A$ but on  the hypercube $\square_{A}$. For instance,the operators $\hat R^A, \hat \Theta^A : \XX^{A} \to \RR^{\square_{A}}$  
read
\begin{align*}
\forall \mathbf{x}\in \XX^A,\qquad \hat{\Theta}^{A}(\mathbf{x})(\gamma) :=&
|\theta|\sum\limits_{\ell\in A} 
    \left(\mathbf{x}^{[L]\smallsetminus\{\ell\}}\otimes \mathcal{L}_\theta -\mathbf{x}\right)\\
 \forall \mathbf{x}\in \XX^A \qquad \hat{R}^A(\mathbf{x}) :=&
    \sum\limits_{\emptyset \subsetneq \mathcal{I}\subsetneq A}  \ \ \nu^A(\mathcal{I})\left(
    \mathbf{x}^{\mathcal{I}} \otimes \mathbf{x}^{A\smallsetminus \mathcal{I}} - \mathbf{x} \right)
\end{align*}
where we recall that $\nu^A$ is the marginal of $\nu$ on $A$, and we take an empty sum to be equal to zero. Similarly, $\hat \Sigma^A$ is a function
$$
\hat \Sigma^{A} :  \ \XX^{\square_A} \to \   \mathcal{M}\left(\square_A\times \square_A,\RR^{\square_A}\right)
$$

\bigskip

Define  $S^A : \XX^{L} \to \RR^{\square_{A}}$
as
$$
\forall x\in \XX^{L}, \qquad
S^{A}({\bf x}) \ = \ (S({\bf x}))^{A}  
$$
so that $S^{A}({\bf x})$ is the generalized marginal of $S({\bf x})$ on $A$. A direct computation shows that 
\begin{align}\label{eq:SAformula}
\forall \mathbf{x}\in \XX^{[L]},\qquad 
S^{A}(\mathbf{x})(\gamma) =&   x^{A}(\gamma) \;\left(\mathbf{x}\big[W(g)\;\big| \ g|_A=\gamma\big]
 -\mathbf{x}\big[W(g)\big]\right)\\
=& \mathbf{Cov}_{\mathbf{x}}[W(g),\mathbbm{1}_{[ \ g|_A=\gamma]}]\label{eq:SAcov}
\end{align}

\begin{rem}
Note that  $S^A({\bf x})$  is the marginal on $A$ of $S(\mathbf{x})$.
However, it is not so for $\hat{R}^A$. $\hat{R}^A$ is defined on $\XX^A$ (not on $\XX^{[L]}$). To stress out the distinction, we write $\hat{R}^A$ and not $R^A$. The same goes for $\hat{\Theta}^A,\hat{\Sigma}^A$. 
\end{rem}

\begin{prop}\label{prop:consistency}
For $A\subseteq [L]$, there is a Gaussian process $\hat{\mathbf{B}}^A = (\hat{B}^{A}(\gamma_1\gamma_2))_{\gamma_1\neq \gamma_2\in\square_A}$ such that
\begin{equation}\label{eq:consistency}
\forall \gamma\in \square_{A},\qquad
  \dd\mathbf{X}_t^{A} = \left(\rho \hat{R}^{A}(\mathbf{X}_t^A) + \hat{\Theta}^{A}(\mathbf{X}_t^A) + LS^{A}(\mathbf{X}_t) \right)\dd t + \hat{\Sigma}^{A}(\mathbf{X}_t^A)\dd\hat{\mathbf{B}}_t^A
\end{equation}
Furthermore, $\hat{B}^{A}(\gamma_1,\gamma_2) = -\hat{B}^{A}(\gamma_2,\gamma_1)$ and $\hat{B}^A(\gamma_1,\gamma_2),\hat{B}^A(\gamma_3,\gamma_4)$ are independent Brownian motions whenever $(\gamma_1,\gamma_2)\notin \{(\gamma_3,\gamma_4),(\gamma_4,\gamma_3)\}$.
\end{prop}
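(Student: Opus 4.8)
The plan is to deduce the closed SDE for the marginal $\mathbf{X}_t^A$ from the master SDE (\ref{eq:intromaster}) by applying It\^o's formula to the linear (projection) map $\mathbf{x}\mapsto \mathbf{x}^A$ and then checking that each of the four drift/diffusion terms projects consistently. Since $\mathbf{x}\mapsto \mathbf{x}^A$ is linear, It\^o's formula contributes no second-order correction, so one only has to track how $\rho R$, $\Theta$, $LS$ and $\Sigma\,d\mathbf{B}$ behave under marginalization. First I would treat the mutation term: $\Theta$ acts coordinate-by-coordinate on loci, so flipping a coordinate $\ell\in A$ and summing only over $\ell\in A$ reproduces exactly $\hat\Theta^A$ applied to $\mathbf{X}_t^A$ (the loci outside $A$ contribute nothing to the $A$-marginal after summation, because summing $x(\gamma_{-\ell})-x(\gamma)$ over the erased coordinates telescopes to zero). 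Next, the selection term: by the very definition $S^A(\mathbf{x}):=(S(\mathbf{x}))^A$, the projection of $LS(\mathbf{X}_t)$ is $LS^A(\mathbf{X}_t)$, and the covariance formula (\ref{eq:SAcov}) is just the marginal of the Price-equation expression; here I want to stress (as the remark does) that this term does \emph{not} close on $\mathbf{X}_t^A$ — it genuinely depends on the full $\mathbf{X}_t$ — which is fine since the statement keeps $S^A(\mathbf{X}_t)$ and not $\hat S^A(\mathbf{X}_t^A)$.

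The recombination term requires a little more care. One must show that the $A$-marginal of $R(\mathbf{x})$ equals $\hat R^A(\mathbf{x}^A)$, i.e. that recombination \emph{does} close on marginals. For a splitting $\emptyset\subsetneq\mathcal{I}\subsetneq[L]$, the $A$-marginal of $\mathbf{x}^{\mathcal{I}}\otimes\mathbf{x}^{\mathcal{I}^c}$ is $\mathbf{x}^{\mathcal{I}\cap A}\otimes\mathbf{x}^{\mathcal{I}^c\cap A}$ (marginalizing a product of independent blocks), and the $A$-marginal of $\mathbf{x}$ is $\mathbf{x}^A$; grouping the sum over $\mathcal{I}$ according to the value of $\mathcal{J}:=\mathcal{I}\cap A$, the coefficient of the term $(\mathbf{x}^{\mathcal J}\otimes\mathbf{x}^{A\setminus\mathcal J}-\mathbf{x}^A)$ becomes $\sum_{\mathcal I:\mathcal I\cap A=\mathcal J}\nu(\mathcal I)=\nu^A(\mathcal J)$ when $\emptyset\subsetneq\mathcal J\subsetneq A$; the contributions from $\mathcal J=\emptyset$ or $\mathcal J=A$ give $\mathbf{x}^A-\mathbf{x}^A=0$ and drop out. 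This yields exactly $\hat R^A(\mathbf{x}^A)$. I would record this identity as a short lemma-style computation, perhaps noting it was implicitly used already in Lemma \ref{lem:defnolink}.

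The genuinely delicate point — and what I expect to be the main obstacle — is the martingale/diffusion term: one must construct the Gaussian process $\hat{\mathbf{B}}^A$ and verify it has the stated covariance structure (antisymmetry, and independence of $\hat B^A(\gamma_1,\gamma_2)$ and $\hat B^A(\gamma_3,\gamma_4)$ off the diagonal pairs). The projection of $\Sigma(\mathbf{X}_t)\,d\mathbf{B}_t$ onto coordinate $\gamma\in\square_A$ is $\sum_{\gamma'\in\square_{[L]}:\gamma'|_A=\gamma}\sum_{\hat\gamma\neq\gamma'}\sqrt{X_t(\gamma')X_t(\hat\gamma)}\,dB_t(\gamma',\hat\gamma)$, a continuous local martingale; I would compute its quadratic covariation with the analogous expression for another $\gamma''\in\square_A$ and check it equals the bracket one would get from $\hat\Sigma^A(\mathbf{X}_t^A)\,d\hat{\mathbf{B}}_t^A$ with $\hat B^A$ having the prescribed correlations — the key identity being that the bracket of the projected martingale at $(\gamma,\gamma)$ is $\sum_{\hat\gamma\in\square_A,\hat\gamma\neq\gamma}X_t^A(\gamma)X_t^A(\hat\gamma)$, which involves only $\mathbf{X}_t^A$ (here one uses that $\sum_{\gamma'|_A=\gamma}X_t(\gamma')=X_t^A(\gamma)$ and that cross terms coming from pairs with the same second coordinate cancel via the antisymmetry $B(\gamma_1,\gamma_2)=-B(\gamma_2,\gamma_1)$). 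Once the bracket is shown to have exactly the multiallele Wright-Fisher form on $\square_A$ driven by $\mathbf{X}_t^A$, an application of the martingale representation theorem (or L\'evy's characterization applied to a suitable orthogonalization, possibly on an enlarged probability space) produces the family $\hat{\mathbf{B}}^A$ with the asserted independence structure, completing the identification of (\ref{eq:consistency}). The bookkeeping of which pairs $(\gamma_1,\gamma_2)$ of genotypes in $\square_{[L]}$ collapse to the same or to antisymmetric pairs in $\square_A$ is the fiddly part, but it is the direct analogue of the standard fact that marginals of the multiallele Wright-Fisher diffusion are again multiallele Wright-Fisher diffusions.
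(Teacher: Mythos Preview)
Your plan is correct and matches the paper's approach term by term; the mutation, selection, and recombination parts are handled exactly as you describe (the paper cites \cite{Baakebaake} for the recombinator consistency $(R(\mathbf{x}))^A=\hat R^A(\mathbf{x}^A)$ rather than proving it inline as you do, but your grouping-by-$\mathcal{J}=\mathcal{I}\cap A$ argument is precisely what that reference contains). The one substantive difference is in the diffusion term: rather than computing the brackets of the projected martingale and then appealing to an abstract martingale representation theorem, the paper writes down $\hat{B}^A$ explicitly as
\[
d\hat{B}_t^A(\gamma_1,\gamma_2):=\sum_{\substack{\hat\gamma_1,\hat\gamma_2\in\square_{[L]}\\ \hat\gamma_i|_A=\gamma_i}}\sqrt{\frac{X_t(\hat\gamma_1)X_t(\hat\gamma_2)}{X_t^A(\gamma_1)X_t^A(\gamma_2)}}\,dB_t(\hat\gamma_1,\hat\gamma_2)
\]
and then verifies directly, via the same bracket computations you outline, that these are independent Brownian motions with the required antisymmetry. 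The explicit construction buys two things over your abstract route: it avoids any need to enlarge the probability space, and it makes well-definedness at the boundary (where some $X_t^A(\gamma_i)=0$) transparent, since the integrand is always bounded by $1$. Your approach is equally valid but would require a sentence explaining why the bracket matrix admits a factorization of exactly the $\hat\Sigma^A$ form.
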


\begin{proof}
We have
\begin{align}
\dd X_t^{A}(\gamma) =& \sum\limits_{\hat{\gamma}\in\square_{[L]}} \mathbbm{1}_{[\hat{\gamma}|_A = \gamma]} \dd X_t({\hat{\gamma}})\nonumber\\
=& \left(R(\mathbf{X}_t) + \Theta(\mathbf{X}_t)\right)^A(\gamma)\dd t
+ L S^A(\mathbf{X}_t)(\gamma)\dd t 
+ \sum\limits_{\substack{\hat{\gamma}_1, \hat{\gamma}_2\in\square_{[L]}\\\hat{\gamma}_1\neq \hat{\gamma}_2}}\mathbbm{1}_{[\hat{\gamma}_1^A=\gamma]} \sqrt{X_t(\hat{\gamma}_1) X_t(\hat{\gamma}_1})\dd B_t(\hat{\gamma}_1,\hat{\gamma}_2)
\label{eq:tmpconsistency1}
\end{align}
We first calculate the marginal effect of recombination. We use the Proposition 6 of \cite{Baakebaake}, where the following consistency relation is shown
\begin{equation*}
\left(R(\mathbf{x})\right)^A = \hat{R}^A(\mathbf{x}^A)
\end{equation*}
Secondly, recall that $\Theta$ is the generator corresponding to mutation: each locus mutates independently of the others, from $-1$ to $+1$ (resp. $-1$ to $+1$) at rate $\theta^{(+)}$ (resp. $\theta^{(-)}$). We can therefore expect a consistency property, by which taking the marginal effect of $\Theta$ on the loci in $A$, each locus in $A$ mutates independently of the rest with rates $\theta^{(+)},\theta^{(-)}$, which translates into
$$
(\Theta(\mathbf{x}))^{A} = \hat{\Theta}^A(\mathbf{x}^A)$$
Formally, this can be proved as follows.
\begin{align*}
    (\Theta(\mathbf{x}))^A(\gamma)
=
    |\theta|\sum\limits_{\ell\in [L]} 
     \left(\left(\mathbf{x}^{[L]\smallsetminus\{\ell\}}\otimes \mathcal{L}_\theta\right)^A -\mathbf{x}^A\right)
\end{align*}
For any $\ell \notin A$, we have 
$$
(\mathbf{x}^{[L]\smallsetminus \{\ell\}} \otimes \mathcal{L}_\theta)^A = \mathbf{x}^A
$$
This means the sum on $\ell\in [L]$ can be restricted to $A$, which yields $\hat{\Theta}^A(\mathbf{x}^A)$.

Finally, we turn to the Brownian term. This term corresponds to the equation for a neutral Wright-Fisher diffusion with $2^L$ alleles. It is well-known that the multi-allele Wright-Fisher diffusion admits a consistency property, by which if we group alleles together into $2^{\# A}$ families, the frequencies of these families behave like a $2^{\# A}$-allele Wright-Fisher diffusion. For the unconvinced reader we give a sketch of the proof.

For $\gamma_1,\gamma_2 \in \square_A$, define
$$
\dd\hat{B}_t^A(\gamma_1,\gamma_2) := 
\sum\limits_{\substack{\hat{\gamma}_1,\hat{\gamma}_2\in \square_{[L]}\\
\hat{\gamma}_1^A = \gamma_1\\ \hat{\gamma}_2^A = \gamma_2
}}
\sqrt{\frac{X_t(\hat{\gamma}_1)X_t(\hat{\gamma}_2)}{X_t^A(\gamma_1)X_t^A(\gamma_2)}} \dd B_t(\hat{\gamma_1},\hat{\gamma}_2)
$$
Check that $\hat{B}^A$ is formally well-defined because
$$\mathbbm{1}_{[\hat{\gamma}_1^A = \gamma_1,\hat{\gamma}_2^A = \gamma_2]}\frac{X_t(\hat{\gamma}_1)X_t(\hat{\gamma}_2)}{X_t^A(\gamma_1)X_t^A(\gamma_2)} \leq 1.$$
Recall that for any $\hat{\gamma}_1,\hat{\gamma}_2,\hat{\gamma}_3,\hat{\gamma}_4\in \square_{[L]}$
$$\dd \braket{B(\hat{\gamma}_1,\hat{\gamma}_2),B(\hat{\gamma}_3,\hat{\gamma}_4)}_t^{QV} =
(\mathbbm{1}_{[\hat{\gamma}_1=\hat{\gamma}_3,\hat{\gamma}_2=\hat{\gamma}_4]}-\mathbbm{1}_{[\hat{\gamma}_1=\hat{\gamma}_4,\hat{\gamma}_2=\hat{\gamma}_3]})\dd t$$
From there, it is straightforward to obtain for $\gamma_1,\gamma_2,\gamma_3,\gamma_4\in\square_A$
$$
\dd \braket{\hat{B}^A(\gamma_1,\gamma_2),\hat{B}^A(\gamma_3,\gamma_4)} = 
\left(\mathbbm{1}_{[\gamma_1=\gamma_3,\gamma_2=\gamma_4]} -\mathbbm{1}_{[\gamma_1=\gamma_4,\gamma_2=\gamma_3]}\right)\dd t.
$$
To conclude, let us show $\hat{\Sigma}^A(\mathbf{X}_t^A)\dd \mathbf{B}_t^A$ is equal to the Brownian term from (\ref{eq:tmpconsistency1}). We write for fixed $\gamma_1\in \square_A$ 
$$\sum\limits_{\substack{\gamma_2\in \square_{[L]}\smallsetminus \{\gamma_1\}}} \sqrt{X_t^A(\gamma_1)X_t^A(\gamma_2)} \dd\hat{B}_t^A(\gamma_1,\gamma_2) = 
\sum\limits_{\substack{\hat{\gamma}_1, \hat{\gamma}_2\in \square_{[L]}\\
\hat{\gamma}_1^A = \gamma_1 \neq \hat{\gamma}_2^A
}} \sqrt{X_t(\hat{\gamma}_1)X_t(\hat{\gamma}_2)} \dd B_t(\hat{\gamma}_1,\hat{\gamma}_2)
$$
We can extend the sum to the cases where $[\hat{\gamma}_2^A = \gamma_1,\hat{\gamma}_2 \neq \hat{\gamma}_1]$ because the terms $(\hat{\gamma}_1,\hat{\gamma}_2)$ and $(\hat{\gamma}_2,\hat{\gamma}_1)$ cancel out. This yields the Brownian term from (\ref{eq:tmpconsistency1}).
\end{proof}
\begin{rem}
We may notice that if $W=0$, then $S^A=\mathbf{0}$ and the equation for $\mathbf{X}_t^A$ is autonomous.
\end{rem}

We can apply the previous Proposition to $A=\{\ell\}$ and obtain an important Corollary. We need a Lemma
\begin{lem}[Fleming-Viot property \cite{flemingviot}]\label{lem:covF1F2}
    For functions $F_1$ and $F_2$ on $\square_{[L]}$ we have
    $$
    \dd \braket{\mathbf{X}[F_1(g)],\mathbf{X}[F_2(g)]}_t = \mathbf{Cov}_{\mathbf{X}_t}[F_1(g),F_2(g)]\dd t
    $$
\end{lem}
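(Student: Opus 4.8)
The plan is to compute the bracket directly from the master SDE (\ref{eq:intromaster}). Since $\mathbf{X}_t[F_i(g)] = \sum_{\gamma\in\square_{[L]}} F_i(\gamma) X_t(\gamma)$ is a finite linear combination of the coordinates of $\mathbf{X}_t$, and since the drift terms $\rho R(\mathbf{X}_t)+\Theta(\mathbf{X}_t)+LS(\mathbf{X}_t)$ have finite variation and therefore contribute nothing to quadratic covariation, only the martingale part $\Sigma(\mathbf{X}_t)d\mathbf{B}_t$ matters. Using (\ref{def:drift}), the martingale part of $d\big(\mathbf{X}_t[F_i(g)]\big)$ is $\sum_{\gamma}F_i(\gamma)\sum_{\hat\gamma\neq\gamma}\sqrt{X_t(\gamma)X_t(\hat\gamma)}\,dB_t(\gamma,\hat\gamma)$.

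Next I would expand $d\braket{\mathbf{X}[F_1(g)],\mathbf{X}[F_2(g)]}_t$ as a quadruple sum over $\gamma,\hat\gamma,\gamma',\hat\gamma'$ and invoke the covariance structure of $\mathbf{B}$: by the antisymmetry $B(\gamma,\hat\gamma)=-B(\hat\gamma,\gamma)$ and the independence assumption, $d\braket{B(\gamma,\hat\gamma),B(\gamma',\hat\gamma')}_t$ equals $+dt$ when $(\gamma,\hat\gamma)=(\gamma',\hat\gamma')$, equals $-dt$ when $(\gamma,\hat\gamma)=(\hat\gamma',\gamma')$, and vanishes otherwise — this is the same bracket identity already used in the proof of Proposition \ref{prop:consistency}. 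Keeping only the surviving terms collapses the quadruple sum to
$$\sum_{\gamma\neq\hat\gamma} X_t(\gamma)X_t(\hat\gamma)\big(F_1(\gamma)F_2(\gamma)-F_1(\gamma)F_2(\hat\gamma)\big)\,dt \;=\; \sum_{\gamma\neq\hat\gamma} X_t(\gamma)X_t(\hat\gamma)F_1(\gamma)\big(F_2(\gamma)-F_2(\hat\gamma)\big)\,dt.$$

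Finally I would check that this matches $\mathbf{Cov}_{\mathbf{X}_t}[F_1(g),F_2(g)]\,dt$: writing the covariance as $\mathbf{X}_t[F_1F_2]-\mathbf{X}_t[F_1]\,\mathbf{X}_t[F_2]$ and using $\sum_{\hat\gamma}X_t(\hat\gamma)=1$ to turn the first term into the double sum $\sum_{\gamma,\hat\gamma}X_t(\gamma)X_t(\hat\gamma)F_1(\gamma)F_2(\gamma)$, one obtains $\sum_{\gamma,\hat\gamma}X_t(\gamma)X_t(\hat\gamma)F_1(\gamma)\big(F_2(\gamma)-F_2(\hat\gamma)\big)$, whose diagonal terms $\gamma=\hat\gamma$ vanish, leaving exactly the expression above. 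There is no genuine obstacle: all sums are finite, so every interchange and term-by-term manipulation is legitimate; the only point requiring care is the sign bookkeeping from the antisymmetry of $\mathbf{B}$ and the verification that the $\gamma=\hat\gamma$ contributions can be harmlessly dropped (which holds since the factor $F_2(\gamma)-F_2(\hat\gamma)$ kills them).
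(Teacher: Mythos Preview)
Your proposal is correct and follows essentially the same approach as the paper's proof: both expand the bracket as a quadruple sum, use the identity $d\braket{B(\gamma,\hat\gamma),B(\gamma',\hat\gamma')}_t=(\mathbbm{1}_{[(\gamma,\hat\gamma)=(\gamma',\hat\gamma')]}-\mathbbm{1}_{[(\gamma,\hat\gamma)=(\hat\gamma',\gamma')]})dt$ to collapse it to a double sum over $\gamma\neq\hat\gamma$, and then identify the result with the covariance by adding or removing the vanishing diagonal terms. The only cosmetic difference is the direction of that last step --- the paper adds the zero diagonal and splits, you write the covariance and drop the zero diagonal --- but the argument is the same.
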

\begin{proof}
We have
\begin{align*}
\dd \braket{\mathbf{X}[F_1(g)],\mathbf{X}[F_2(g)]}_t 
=& 
\sum\limits_{\gamma_1,\gamma_2\in\square_{[L]}}
F_1(\gamma_1)F_2(\gamma_2)
\left( X_t(\gamma_1)\mathbbm{1}_{[\gamma_1=\gamma_2]}
-
X_t(\gamma_1)X_t(\gamma_2) \right)\dd t\\
=& 
\sum\limits_{\gamma_1\in\square_{[L]}}
F_1(\gamma_1)F_2(\gamma_1) X(\gamma_1)_t \dd t
\quad -
\sum\limits_{\gamma_1,\gamma_2\in\square_{[L]}}
F_1(\gamma_1)F_2(\gamma_2)X(\gamma_1)X(\gamma_2) \dd t
\end{align*}
where in the first equality we used that $\dd \braket{X(\gamma_1),X(\gamma_2)}_t = \left(\mathbbm{1}_{[\gamma_1=\gamma_2]}X_t(\gamma_1) -X_t(\gamma_1)X_t(\gamma_2)\right)\dd t$.
The last line is $\mathbf{Cov}_{\mathbf{X}_t}[F_1(g),F_2(g)]$.
\end{proof}

\begin{cor}\label{cor:SDEp(X)}
\begin{equation}
 \forall \ell\in[L], \ \   \dd p^\ell(\mathbf{X}_t) = \left(\overline{\Theta}(p^\ell(\mathbf{X}_t))+LS^\ell(\mathbf{X}_t)\right)\dd t + \sqrt{p^\ell(\mathbf{X}_t)(1-p^\ell(\mathbf{X}_t))}\dd\hat{B}_t^\ell
 \label{eq:SDEp(X)}
\end{equation}
with $\overline{\Theta}$ from Theorem \ref{thm:strongcvgrho} and $(\hat{B}^\ell)_{\ell\in[L]}$ a $L$-dimensional  Brownian motion with
\begin{equation}
\forall i\neq j\in[L], \ \dd \braket{p^{\ell_i}(\mathbf{X}),p^{\ell_j}(\mathbf{X})}_t^{QV} = D^{\ell_1,\ell_2}(\mathbf{X}_t)\dd t
\label{eq:covell1ell2}    
\end{equation}
where we recall $D^{\ell_1,\ell_2}$ from (\ref{def:D}).
\end{cor}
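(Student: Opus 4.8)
The plan is to apply Proposition \ref{prop:consistency} with $A=\{\ell\}$ and read off each term in (\ref{eq:consistency}). On the two-point hypercube $\square_{\{\ell\}}=\{-1,+1\}$ a probability vector $\mathbf{x}^{\{\ell\}}$ is completely determined by $p^\ell=x^{\{\ell\}}(+1)$, so (\ref{eq:consistency}) for $A=\{\ell\}$, evaluated at $\gamma=+1$, becomes a scalar SDE for $p^\ell(\mathbf{X}_t)$. First I would note that $\hat R^{\{\ell\}}=0$: there is no set $\mathcal{I}$ with $\emptyset\subsetneq\mathcal{I}\subsetneq\{\ell\}$, so the defining sum for $\hat R^{\{\ell\}}$ is empty — recombination cannot act on a single locus. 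Next, the mutator at $\gamma=+1$ gives $\hat\Theta^{\{\ell\}}(\mathbf{x})(+1)=\theta_1 x^{\{\ell\}}(-1)-\theta_2 x^{\{\ell\}}(+1)=\theta_1(1-p^\ell)-\theta_2 p^\ell=\overline\Theta(p^\ell)$, which is precisely the function $\overline\Theta$ of Theorem \ref{thm:strongcvgrho}. The selection term is $L S^{\{\ell\}}(\mathbf{X}_t)(+1)=L S^\ell(\mathbf{X}_t)$ by definition of $S^\ell$. Finally, the diffusion term at $\gamma=+1$ reads $\sqrt{X_t^{\{\ell\}}(+1)X_t^{\{\ell\}}(-1)}\,d\hat B_t^{\{\ell\}}(+1,-1)=\sqrt{p^\ell(\mathbf{X}_t)(1-p^\ell(\mathbf{X}_t))}\,d\hat B_t^{\{\ell\}}(+1,-1)$, so setting $\hat B_t^\ell:=\hat B_t^{\{\ell\}}(+1,-1)$ — a Brownian motion by Proposition \ref{prop:consistency} — yields (\ref{eq:SDEp(X)}).

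For the cross-variations (\ref{eq:covell1ell2}) I would not try to compute $\braket{\hat B^{\ell_i},\hat B^{\ell_j}}$ directly, but instead observe that $p^\ell(\mathbf{X}_t)=\mathbf{X}_t[F_\ell(g)]$ for the linear functional $F_\ell(g):=\mathbbm{1}_{[g^\ell=+1]}$. Applying Lemma \ref{lem:covF1F2} with $F_1=F_{\ell_i}$ and $F_2=F_{\ell_j}$ gives at once $d\braket{p^{\ell_i}(\mathbf{X}),p^{\ell_j}(\mathbf{X})}_t=\mathbf{Cov}_{\mathbf{X}_t}[\mathbbm{1}_{[g^{\ell_i}=+1]},\mathbbm{1}_{[g^{\ell_j}=+1]}]\,dt=D^{\ell_i,\ell_j}(\mathbf{X}_t)\,dt$, which is exactly (\ref{eq:covell1ell2}); this also pins down the joint law of the family $(\hat B^\ell)_{\ell\in[L]}$ through the linkage disequilibria. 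Taking $\ell_i=\ell_j$ recovers $d\braket{p^\ell(\mathbf{X})}_t=p^\ell(\mathbf{X}_t)(1-p^\ell(\mathbf{X}_t))\,dt$, consistent with $\hat B^\ell$ being standard, as given by Proposition \ref{prop:consistency} (using $D^{\ell,\ell}=\mathbf{Var}$).

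There is essentially no obstacle: the statement is the specialization of Proposition \ref{prop:consistency} to singletons combined with Lemma \ref{lem:covF1F2}. The only point requiring a moment of care is the bookkeeping that $\hat R^{\{\ell\}}$ vanishes and that $\hat\Theta^{\{\ell\}}$, written in the coordinate $p^\ell$, is exactly the one-locus mutation drift $\overline\Theta$; the rest is direct substitution.
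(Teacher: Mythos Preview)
Your proposal is correct and follows essentially the same approach as the paper: specialize Proposition \ref{prop:consistency} to $A=\{\ell\}$, read off each term (noting the empty recombination sum and identifying $\hat\Theta^{\{\ell\}}(+1)=\overline\Theta$), set $\hat B^\ell:=\hat B^{\{\ell\}}(+1,-1)$, and obtain (\ref{eq:covell1ell2}) from Lemma \ref{lem:covF1F2} applied to the indicators $\mathbbm{1}_{[g^{\ell_i}=+1]}$. Your write-up is in fact slightly more explicit than the paper's own proof.
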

\begin{proof}[Proof of Corollary \ref{cor:SDEp(X)}]
We apply Proposition \ref{prop:consistency} to $A=\{\ell\}$ and consider the $+1$ coordinate, recalling that $p^\ell(\mathbf{x})=\mathbf{x}^{\{\ell\}}(+1)$). We also use the fact that $\hat{R}^{\{\ell\}}=\mathbf{0}$, i.e., recombination does not alter allele frequencies.
To get (\ref{eq:covell1ell2}), apply Lemma \ref{lem:covF1F2} with $F_i(g)=\mathbbm{1}_{[g^{\ell_i}=+1]}$ for $i\in\{1,2\}$.
\end{proof}

Comparing equation (\ref{eq:SDEp(X)}) and the desired limit equation (\ref{eq:McKean-limitW}), we see that we need two things. On the one hand, for any $\ell_1\neq \ell_2$ we must obtain that $\braket{p^{\ell_1}(\mathbf{X}),p^{\ell_2}(\mathbf{X})}_t^{QV} \to 0$. This will be achieved by controlling $D^{\ell_1,\ell_2}(\mathbf{X}_t)$. On the other hand, we need that
$$|LS^\ell(\mathbf{X}_t) - p^\ell(\mathbf{X}_t)(1-p^\ell(\mathbf{X}_t)) \overline{s}(\mu_{\mathbf{X}_t}) | \longrightarrow 0$$
This is much more difficult to obtain. We will get it by showing $S^\ell(\mathbf{X}_t)\simeq S^\ell(\pi(\mathbf{X}_t))$ and using the following Lemma

\begin{lem}\label{lem:pibarom}
We have
$$
\forall \ell\in [L],\qquad
LS^{\ell}(\pi(\mathbf{x})) = p^\ell(\mathbf{x})(1-p^\ell(\mathbf{x}))\overline{s}(\mu_{\mathbf{x}})+\mathcal{O}\left(\frac{1}{L}\right)
$$
where $\mathcal{O}$ is uniform in $\mathbf{x}$.
\end{lem}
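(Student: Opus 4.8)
The plan is to compute $LS^{\ell}(\pi(\mathbf{x}))$ exactly, exploiting that under the product measure $\pi(\mathbf{x})$ the coordinates $g^{1},\dots,g^{L}$ are independent and that the log-fitness is a polynomial of degree at most $2$ in the empirical mean $Z(g)$. By the covariance formula (\ref{eq:SAcov}) applied to $A=\{\ell\}$ and $\gamma=+1$ (recall $p^{\ell}(\mathbf{x})=x^{\{\ell\}}(+1)$),
$$
LS^{\ell}(\pi(\mathbf{x})) \;=\; \mathbf{Cov}_{\pi(\mathbf{x})}\!\left[W(g),\mathbbm{1}_{[g^{\ell}=+1]}\right].
$$
Set $p_{\ell'}:=p^{\ell'}(\mathbf{x})$. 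Under $\pi(\mathbf{x})$ the $g^{\ell'}$ are independent with $\pi(\mathbf{x})[g^{\ell'}]=2p_{\ell'}-1$ and $(g^{\ell'})^{2}=1$, so the basic building block is $\mathbf{Cov}_{\pi(\mathbf{x})}[g^{\ell'},\mathbbm{1}_{[g^{\ell}=+1]}]=2p_{\ell}(1-p_{\ell})\,\mathbbm{1}_{[\ell'=\ell]}$, since $g^{\ell}=2\mathbbm{1}_{[g^{\ell}=+1]}-1$.

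First I would dispatch the affine case $U(z)=az+b$: then $W(g)=a\sum_{\ell'}g^{\ell'}+Lb$, only the $\ell'=\ell$ term survives, and $LS^{\ell}(\pi(\mathbf{x}))=2a\,p_{\ell}(1-p_{\ell})=\overline{s}(\mu_{\mathbf{x}})\,p_{\ell}(1-p_{\ell})$ with no error term, since $\overline{s}\equiv 2a$. For the genuinely quadratic case I use (\ref{eq:U-square}), write $\Sigma_{g}:=\sum_{\ell'}g^{\ell'}=LZ(g)$, and expand
$$
W(g)\;=\;-\frac{\omega}{L}\,\Sigma_{g}^{2}\;+\;2\omega z^{*}\,\Sigma_{g}\;-\;\omega L (z^{*})^{2},
\qquad
\Sigma_{g}^{2}\;=\;L+\sum_{\ell_{1}\neq \ell_{2}}g^{\ell_{1}}g^{\ell_{2}},
$$
the last identity because $(g^{\ell'})^{2}=1$. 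The constant terms drop from the covariance; the linear part contributes $2\omega z^{*}\cdot 2p_{\ell}(1-p_{\ell})$; and in $\sum_{\ell_{1}\neq\ell_{2}}g^{\ell_{1}}g^{\ell_{2}}$ only the products that contain the index $\ell$ have nonzero covariance with $\mathbbm{1}_{[g^{\ell}=+1]}$ (the others are $\pi(\mathbf{x})$-independent of $g^{\ell}$), so, factoring out the independent factor,
$$
\mathbf{Cov}_{\pi(\mathbf{x})}\!\left[\sum_{\ell_{1}\neq \ell_{2}}g^{\ell_{1}}g^{\ell_{2}},\ \mathbbm{1}_{[g^{\ell}=+1]}\right]
\;=\;4\,p_{\ell}(1-p_{\ell})\sum_{\ell'\neq\ell}(2p_{\ell'}-1).
$$
Using $\sum_{\ell'\neq\ell}(2p_{\ell'}-1)=L\!\left<\mu_{\mathbf{x}},2\mbox{Id}-1\right>-(2p_{\ell}-1)$ and collecting all contributions gives
$$
LS^{\ell}(\pi(\mathbf{x}))\;=\;4\omega\,p_{\ell}(1-p_{\ell})\!\left(z^{*}-\left<\mu_{\mathbf{x}},2\mbox{Id}-1\right>\right)\;+\;\frac{4\omega}{L}\,p_{\ell}(1-p_{\ell})(2p_{\ell}-1).
$$
Since $\overline{s}(\zeta)=2U'(\left<\zeta,2\mbox{Id}-1\right>)$ with $U'(z)=-2\omega(z-z^{*})$, the first term is exactly $\overline{s}(\mu_{\mathbf{x}})\,p_{\ell}(1-p_{\ell})$; the second has absolute value at most $4|\omega|/L$ because $p_{\ell}\in[0,1]$, and $\omega$ is a constant independent of $L$, so it is $\mathcal{O}(1/L)$ uniformly in $\mathbf{x}$ and $\ell$, which is the claim.

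There is no real analytic difficulty here — the fitness is quadratic and every quantity is explicit — so I expect the only delicate point to be the combinatorial bookkeeping: identifying, under the product measure, which monomials of $W(g)$ have nonzero covariance with $\mathbbm{1}_{[g^{\ell}=+1]}$, and noting that the single ``diagonal'' correction of size $1/L$ comes precisely from excluding $\ell$ in the sum $\sum_{\ell'\neq\ell}(2p_{\ell'}-1)$ (equivalently, from the $(g^{\ell})^{2}$ contribution to $\Sigma_{g}^{2}$). If one wishes to allow a general degree-$2$ polynomial $U(z)=\alpha z^{2}+\beta z+\gamma$ instead of the centered form (\ref{eq:U-square}), the same computation applies verbatim with $-\omega$ replaced by $\alpha$ and $2\omega z^{*}$ by $\beta$.
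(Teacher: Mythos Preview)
Your proof is correct and follows essentially the same approach as the paper: both exploit the independence of coordinates under $\pi(\mathbf{x})$ and the quadratic form of $W$. The paper starts from the conditional-expectation form (\ref{eq:SAformula}) and Taylor-expands $U$ around $\frac{1}{L}\sum_{\hat\ell\neq\ell}g^{\hat\ell}$, whereas you start from the covariance form (\ref{eq:SAcov}) and expand $\Sigma_g^2$ directly; your route has the minor bonus of producing the explicit remainder $\frac{4\omega}{L}p_\ell(1-p_\ell)(2p_\ell-1)$ rather than an abstract $\mathcal{O}(1/L)$.
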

\begin{proof}
    Recall from the definition of $\pi$ that $p^\ell(\pi(\mathbf{x}))=p^\ell(\mathbf{x})$. Applying this to equation (\ref{eq:SAformula}) for $\ell\in [L]$ we get
    \begin{align*}
    LS^{\ell}(\pi(\mathbf{x})) =& Lp^\ell(\mathbf{x})\left(\pi(\mathbf{x})[U(Z(g))|g^\ell=+1]-\pi(\mathbf{x})[U(Z(g))]\right)\\
    =& Lp^\ell(\mathbf{x})
    \Bigg(\pi(\mathbf{x})\left[U\left(\frac{1}{L} 
    + \frac{1}{L}\sum\limits_{\hat{\ell}\in [L]\smallsetminus\{\ell\}} g^{\hat{\ell}}\right)\Big|g^\ell=+1\right]\\&
    \qquad\qquad\qquad\qquad
    -\pi(\mathbf{x})\left[U\left(\frac{g^\ell}{L} + \frac{1}{L}\sum\limits_{\hat{\ell}\in[L]\smallsetminus\{\ell\}} g^{\hat{\ell}}\right)\right]\Bigg)\\
=& p^\ell(\mathbf{x})\times 2(1-p^\ell(\mathbf{x}))
    \times \pi(\mathbf{x})\left[ U'\left(\frac{1}{L}\sum\limits_{\hat{\ell}\in[L]\smallsetminus\{\ell\}} g^{\hat{\ell}}\right)
    \right] + \mathcal{O}\left(\frac{1}{L}\right)
\end{align*}
where in the third equality we used that $g^{\hat{\ell}}$ and $g^\ell$ are independent under $\pi(\mathbf{x})$ and $\pi(\mathbf{x})[g^\ell]=2p^\ell(\mathbf{x})-1$.
To conclude, write
\begin{align*}
\pi(\mathbf{x})\left[\frac{1}{L}\sum\limits_{\hat{\ell}\in[L]\smallsetminus\{\ell\}} g^{\hat{\ell}}
    \right] 
    =&\;  <\mu_{\mathbf{x}},2\mbox{Id}-1> + \mathcal{O}\left(\frac{1}{L}\right)
\end{align*}
Since we assumed $U$ to be a quadratic polynomial, then $U'$ is  of degree one and we get the result.
\end{proof}

We conclude this section by proving Proposition \ref{prop:boundDSwithL2} which states that we may control both $D^{\ell_1,\ell_2}(\mathbf{X}_t)$ and $S^\ell(\mathbf{X}_t)$ with $||\mathbf{X}_t^A-\pi(\mathbf{X}_t^A)||_2$ for small sets $A\subseteq [L]$.
\begin{proof}[Proof of Proposition \ref{prop:boundDSwithL2}]
The first inequality is readily obtained with
\begin{align*}
\forall \ell_1\neq\ell_2,\qquad
\left|D^{\ell_1,\ell_2}(\mathbf{x})\right| 
=& \left|\mathbf{Cov}_{\mathbf{x}}\left[\mathbbm{1}_{[g^{\ell_1}=+1]},\mathbbm{1}_{[g^{\ell_2}=+1]}\right]\right|\\
=& \left|\mathbf{x}\left[\mathbbm{1}_{[g^{\ell_1}=+1,g^{\ell_2}=+1]}\right] - \pi(\mathbf{x})\left[\mathbbm{1}_{[g^{\ell_1}=+1,g^{\ell_2}=+1]}\right]\right|\\
\leq& \left|\left|\mathbf{x}^{\{\ell_1,\ell_2\}}-\pi^{\{\ell_1,\ell_2\}}(\mathbf{x})\right|\right|_2
\end{align*}

For the second inequality, it is enough to prove the result when $W(g) = 
Z(g)$ or $W(g)=Z(g)^{2}$, and the general case will follow by linearity. We show this for $W(g)=Z(g)^2$. In this case we write from Proposition \ref{prop:consistency}
\begin{align*}
S^{\ell_0}(\mathbf{x}) =& \mathbf{Cov}_{\mathbf{x}}\left[Z(g)^2, \mathbbm{1}_{[g^{\ell_0}=+1]}\right]
\\
=& \frac{1}{L^2}\sum\limits_{\ell_1,\ell_2\in [L]} 
\mathbf{Cov}_{\mathbf{x}}\left[g^{\ell_1}g^{\ell_2}, \mathbbm{1}_{[g^{\ell_0}=+1]}\right]\\
=& \frac{1}{L^2}\sum\limits_{\ell_1,\ell_2 \in [L]} 
\mathbf{x}\left[g^{\ell_1}g^{\ell_2}\mathbbm{1}_{[g^{\ell_0}=+1]}\right]
- \mathbf{x}\left[g^{\ell_1}g^{\ell_2}\right]p^{\ell_0}(\mathbf{x})
\end{align*}
The analog holds for $S^{\ell_0}(\pi(\mathbf{x}))$. It follows
\begin{multline*}
\left|S^{\ell_0}(\mathbf{x}) - S^{\ell_0}(\pi(\mathbf{x}))\right|\\
\leq \frac{1}{L^2}\sum\limits_{\ell_1,\ell_2\in [L]} 
\Big|\mathbf{x}\left[g^{\ell_1}g^{\ell_2}\mathbbm{1}_{[g^{\ell_0}=+1]}\right]
- \pi(\mathbf{x})\left[g^{\ell_1}g^{\ell_2}\mathbbm{1}_{[g^{\ell_0}=+1]}\right]\Big|\;
 +\; p^{\ell_0}(\mathbf{x})\times\Big|\mathbf{x}[g^{\ell_1}g^{\ell_2}] - 
\pi(\mathbf{x})[g^{\ell_1}g^{\ell_2}] \Big|
\end{multline*}
We may remove the summmand corresponding to $\ell_1=\ell_2=\ell_0$, because $\mathbf{x}$ and $\pi(\mathbf{x})$ have the same marginals on $\ell_0$. We conclude by rewriting this
$$
\left|S^{\ell_0}(\mathbf{x}) - S^{\ell_0}(\pi(\mathbf{x}))\right|
\leq 
\frac{2}{L^2}
\sum\limits_{\substack{A\subset [L]\smallsetminus\{\ell_0\}\\1\leq \#A\leq 2}} 
\left|\left|\mathbf{x}^{\{\ell_0\}\cup A } - 
\pi(\mathbf{x}^{\{\ell_0\}\cup A})\right|\right|_1
$$
Similar calculations when $W(g)=Z(g)$ yield
$$
\left|S^{\ell_0}(\mathbf{x}) - S^{\ell_0}(\pi(\mathbf{x}))\right|
\leq 
\frac{2}{L}
\sum\limits_{\ell_1\in[L]\smallsetminus \{\ell_0\}} 
\left|\left|\mathbf{x}^{\{\ell_0,\ell_1\} } - 
\pi(\mathbf{x}^{\{\ell_0,\ell_1\}})\right|\right|_1
$$
We conclude using the equivalence of the $L^1$ and $L^2$ norms in $\RR^{\square_{A}}$. 
\end{proof}

\subsection{Eigenvalues of the linearized recombinator \label{sec:eigenrec}}
The goal of this section is to obtain some properties of the jacobian of the recombinator $\nabla \hat{R}^A$, which will allow us to find a lower bound for the contribution of recombination to the dynamics of $\mathbf{X}_t^A$. Because $\hat{R}^A$ is the analog of $R$, we will simplify our proofs without loss of generality by assuming $A=[L]$. The following Lemma motivates the study of the jacobian of the recombinator by relating it to the recombinator itself.
\begin{lem}\label{lem:nablaFI}
    $$
    \forall \mathbf{x}\in\XX^{[L]},\qquad
    \nabla R(\mathbf{x})(\mathbf{x}-\pi(\mathbf{x})) - R(\mathbf{x})
    = \sum\limits_{\emptyset\subsetneq\mathcal{I}\subsetneq [L]}\nu(\mathcal{I}) (\mathbf{x}-\pi(\mathbf{x}))^{\mathcal{I}}\otimes (\mathbf{x}-\pi(\mathbf{x}))^{\mathcal{I}^{c}}
    $$
In particular, if $\#A = 3$ then
\begin{equation}\label{eq:linearizeR}
\forall \mathbf{x}\in\XX^{A},\qquad 
\hat{R}^A(\mathbf{x})=\nabla \hat{R}^A(\mathbf{x})(\mathbf{x}-\pi(\mathbf{x}))
\end{equation}
\end{lem}
\begin{proof}
A simple computation shows that for any $\mathbf{h}\in\RR^{\square_{[L]}}$
\begin{equation}\label{eq:nablaFI}
\nabla R(\mathbf{x})\mathbf{h} = \sum\limits_{\emptyset\subsetneq\mathcal{I}\subsetneq [L]}\nu(\mathcal{I}) (\mathbf{x}^{\mathcal{I}}\otimes \mathbf{h}^{\mathcal{I}^c} 
+ \mathbf{h}^{\mathcal{I}}\otimes \mathbf{x}^{{\mathcal I}^c} - \mathbf{h})
\end{equation}
And thus
$$
\nabla R(\mathbf{x})\mathbf{h} - R({\bf x})
= \sum\limits_{\emptyset\subsetneq\mathcal{I}\subsetneq [L]}\nu(\mathcal{I}) \left( \mathbf{x}^{\mathcal{I}}\otimes \mathbf{h}^{{\mathcal I}^{c}}
+ (\mathbf{h}-\mathbf{x})^{\mathcal{I}}\otimes \mathbf{x}^{{\mathcal I}^{c}} - (\mathbf{h}-\mathbf{x}) \right)
$$
Applying this to 
$
\mathbf{h}=\mathbf{x}-\pi(\mathbf{x})$ yields
\begin{align*}
\nabla R({\bf x})(\mathbf{x}-\pi(\mathbf{x})) - R({\bf x})
=& \sum\limits_{\emptyset\subsetneq\mathcal{I}\subsetneq [L]}\nu(\mathcal{I}) (\mathbf{x}^{\mathcal{I}}\otimes (\mathbf{x}-\pi(\mathbf{x}))^{{\mathcal I}^{c}}
-\pi(\mathbf{x})^{\mathcal{I}}\otimes \mathbf{x}^{{\mathcal I}^{c}} + \pi(\mathbf{x}) )\\
=& \sum\limits_{\emptyset\subsetneq\mathcal{I}\subsetneq [L]}\nu(\mathcal{I}) ( \mathbf{x}^{\mathcal{I}}\otimes (\mathbf{x}-\pi(\mathbf{x}))^{{\mathcal I}^{c}}
-\pi(\mathbf{x})^{\mathcal{I}}\otimes (\mathbf{x}-\pi(\mathbf{x}))^{{\mathcal I}^{c}} )\\
=& \sum\limits_{\emptyset\subsetneq\mathcal{I}\subsetneq [L]}\nu(\mathcal{I}) (\mathbf{x}-\pi(\mathbf{x}))^{\mathcal{I}}\otimes (\mathbf{x}-\pi(\mathbf{x}))^{{\mathcal I}^{c}}
\end{align*}
where in the second equality we used $\pi(\mathbf{x}) = \pi(\mathbf{x})^{\mathcal{I}}\otimes \pi(\mathbf{x})^{{\mathcal I}^{c}}$.

Because $\hat{R}^A$ is the analog of $R$ on $\XX^A$, we obtain
\begin{equation}\label{eq:tmplinearizeR}
    \forall \mathbf{x}\in\XX^A,\qquad
    \nabla \hat{R}^A(\mathbf{x})(\mathbf{x}-\pi(\mathbf{x})) - \hat{R}^A(\mathbf{x})
    = \sum\limits_{\emptyset\subsetneq\mathcal{I}\subsetneq A}\nu^A(\mathcal{I}) (\mathbf{x}-\pi(\mathbf{x}))^{\mathcal{I}}\otimes (\mathbf{x}-\pi(\mathbf{x}))^{A\smallsetminus \mathcal{I}}
\end{equation}
Recall from (\ref{eq:defpi}) that for any $\ell\in A$,
$$\mathbf{x}^{\{\ell\}}=\pi(\mathbf{x})^{\{\ell\}}$$
It follows that whenever $\#\mathcal{I}\in\{1,2\}$, we necessarily have
$$
(\mathbf{x}-\pi(\mathbf{x}))^{\mathcal{I}}\otimes (\mathbf{x}-\pi(\mathbf{x}))^{A\smallsetminus \mathcal{I}} = 0
$$
This yields (\ref{eq:linearizeR}).
\end{proof}

Let us define the following quantities
\begin{equation}\label{eq:defbeta}
\forall \mathcal{I}\subseteq [L],\mathcal{I}\neq\emptyset,\qquad    \beta_{\mathcal{I}} := \sum_{\emptyset \subsetneq \mathcal{K}\subsetneq \mathcal{I}} \nu^{\mathcal{I}}(\mathcal{K})
= 1-\nu^{\mathcal{I}}(\mathcal{I})-\nu^{\mathcal{I}}(\emptyset) = 1-2\nu^{\mathcal{I}}(\mathcal{I})
\end{equation}
We call $\beta_{\mathcal{I}}$ the probability that there is a recombination within $\mathcal{I}$. Also define $\beta_{\emptyset}:=-\beta_{[L]}$. We note the following order property
\begin{equation}\label{eq:orderproperty0}
\forall \mathcal{J}\subseteq \mathcal{I}\subseteq[L],\qquad \beta_{\mathcal{I}}\geq \beta_{\mathcal{J}}.
\end{equation}

For $\mathcal{I}\subseteq [L]$, we define the $\mathcal{I}$-\textbf{linkage vector} with
\begin{equation}\label{eq:Ilinkagevect}
\mathbf{w}_{\mathcal{I}} := \left(2^{-\frac{L}{2}}
\prod\limits_{\ell\in \mathcal{I}} \gamma^\ell\right)_{\gamma\in \square_{[L]}} \in \RR^{\square_{[L]}}
\end{equation}
We start with computational properties of the linkage vectors.
\begin{lem}\label{lem:wbasisA}
The $(\mathbf{w}_{\mathcal{I}})_{\mathcal{I}\subseteq [L]}$ form an orthonormal basis of $\RR^{\square_{[L]}}$ for the usual scalar product, which will be denoted $\braket{\cdot,\cdot}$.
Furthermore, we have
\begin{align}
    \forall \mathcal{J}\subseteq [L],\qquad\qquad \mathbf{w}_{\mathcal{I}} =& 2^{-\frac{L}{2}}\mathbf{w}_{\mathcal{I}\cap \mathcal{J}}^{\mathcal{J}} 
    \otimes \mathbf{w}_{\mathcal{I}\cap \mathcal{J}^c}^{\mathcal{J}^c}\label{eq:productw}\\
    \forall \mathcal{J}, \mathcal{I}\subset [L],\forall \mathbf{x}\in\RR^{\square_{[L]}},\qquad
    \braket{\mathbf{w}_{\mathcal{I}}^{\mathcal{J}},\mathbf{x}^{\mathcal{J}}} =& 
    2^{L-\#\mathcal{J}}\braket{\mathbf{w}_{\mathcal{I}},\mathbf{x}}\mathbbm{1}_{[\mathcal{I}\subseteq \mathcal{J}]}\label{eq:unmarginalw}
\end{align}
\end{lem}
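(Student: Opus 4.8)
The plan is to recognize the $\mathbf{w}_{\mathcal{I}}$ as the orthonormally normalized Walsh--Fourier characters of the hypercube $\square_{[L]}$ and to deduce all three assertions from elementary character arithmetic. The one elementary fact I will use repeatedly is that for any $S\subseteq[L]$, $\sum_{\gamma\in\square_{[L]}}\prod_{\ell\in S}\gamma^\ell = 2^{L}\mathbbm{1}_{[S=\emptyset]}$: the sum factors over coordinates, and $\sum_{\gamma^\ell\in\{-1,+1\}}\gamma^\ell=0$ while $\sum_{\gamma^\ell\in\{-1,+1\}}1=2$. Both (\ref{eq:productw}) and (\ref{eq:unmarginalw}) will then be reduced to a single computation: the marginal of a character $\mathbf{w}_{\mathcal{K}}$ on a subset $\mathcal{J}$.

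For orthonormality, since $(\gamma^\ell)^2=1$ I will write $\prod_{\ell\in\mathcal{I}}\gamma^\ell\prod_{\ell\in\mathcal{J}}\gamma^\ell=\prod_{\ell\in\mathcal{I}\triangle\mathcal{J}}\gamma^\ell$, whence $\braket{\mathbf{w}_{\mathcal{I}},\mathbf{w}_{\mathcal{J}}}=2^{-L}\sum_{\gamma}\prod_{\ell\in\mathcal{I}\triangle\mathcal{J}}\gamma^\ell=\mathbbm{1}_{[\mathcal{I}=\mathcal{J}]}$ by the identity above, using $\mathcal{I}\triangle\mathcal{J}=\emptyset\iff\mathcal{I}=\mathcal{J}$; as there are $2^L=\dim\RR^{\square_{[L]}}$ of these vectors, they form an orthonormal basis. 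Next I will compute, for $\mathcal{K}\subseteq[L]$, $\mathcal{J}\subseteq[L]$ and $\eta\in\square_{\mathcal{J}}$, the marginal $\mathbf{w}_{\mathcal{K}}^{\mathcal{J}}(\eta)=2^{-L/2}\sum_{\gamma:\gamma|_{\mathcal{J}}=\eta}\prod_{\ell\in\mathcal{K}}\gamma^\ell$: factoring off the coordinates of $\mathcal{K}\cap\mathcal{J}$ (which equal the corresponding $\eta^\ell$) and summing over the $\mathcal{J}^{c}$-coordinates shows $\mathbf{w}_{\mathcal{K}}^{\mathcal{J}}(\eta)=\mathbbm{1}_{[\mathcal{K}\subseteq\mathcal{J}]}\,2^{-L/2}\,2^{L-\#\mathcal{J}}\prod_{\ell\in\mathcal{K}}\eta^\ell$. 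Applying this with $\mathcal{K}=\mathcal{I}\cap\mathcal{J}$ and with $\mathcal{K}=\mathcal{I}\cap\mathcal{J}^{c}$ (the containments are then automatic) and evaluating the product measure $\mathbf{w}_{\mathcal{I}\cap\mathcal{J}}^{\mathcal{J}}\otimes\mathbf{w}_{\mathcal{I}\cap\mathcal{J}^{c}}^{\mathcal{J}^{c}}$ at a point $\gamma$ coordinatewise, the factors $2^{L-\#\mathcal{J}}$ and $2^{L-\#\mathcal{J}^{c}}=2^{\#\mathcal{J}}$ collect to $2^{L}$, the two factors $2^{-L/2}$ to $2^{-L}$, and the two partial products merge to $\prod_{\ell\in\mathcal{I}}\gamma^\ell$; multiplying by the leading $2^{-L/2}$ on the right-hand side of (\ref{eq:productw}) recovers $\mathbf{w}_{\mathcal{I}}(\gamma)$, which proves (\ref{eq:productw}).

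For (\ref{eq:unmarginalw}) I will expand $\braket{\mathbf{w}_{\mathcal{I}}^{\mathcal{J}},\mathbf{x}^{\mathcal{J}}}=\sum_{\eta\in\square_{\mathcal{J}}}\mathbf{w}_{\mathcal{I}}^{\mathcal{J}}(\eta)\,\mathbf{x}^{\mathcal{J}}(\eta)$, substitute the marginal formula just obtained for $\mathbf{w}_{\mathcal{I}}^{\mathcal{J}}(\eta)$ together with $\mathbf{x}^{\mathcal{J}}(\eta)=\sum_{\gamma:\gamma|_{\mathcal{J}}=\eta}x(\gamma)$, and interchange the two sums; on the event $\mathcal{I}\subseteq\mathcal{J}$ one has $\prod_{\ell\in\mathcal{I}}\eta^\ell=\prod_{\ell\in\mathcal{I}}\gamma^\ell$ for every $\gamma$ with $\gamma|_{\mathcal{J}}=\eta$, so the double sum collapses to $\mathbbm{1}_{[\mathcal{I}\subseteq\mathcal{J}]}\,2^{L-\#\mathcal{J}}\big(2^{-L/2}\sum_{\gamma}\prod_{\ell\in\mathcal{I}}\gamma^\ell\,x(\gamma)\big)=2^{L-\#\mathcal{J}}\braket{\mathbf{w}_{\mathcal{I}},\mathbf{x}}\mathbbm{1}_{[\mathcal{I}\subseteq\mathcal{J}]}$, which is the claim. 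None of these steps is genuinely difficult; the only point needing care is the bookkeeping of the normalization factors $2^{-L/2}$ and $2^{L-\#\mathcal{J}}$ under marginalization and tensoring, since the marginals $\mathbf{w}_{\mathcal{K}}^{\mathcal{J}}$ are \emph{not} probability measures and an exponent slip in the powers of two is the most likely source of error.
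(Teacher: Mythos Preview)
Your proof is correct and follows essentially the same approach as the paper's: compute the marginal $\mathbf{w}_{\mathcal{K}}^{\mathcal{J}}$ explicitly as a scaled character (zero unless $\mathcal{K}\subseteq\mathcal{J}$), then read off both (\ref{eq:productw}) and (\ref{eq:unmarginalw}) from this formula. The only cosmetic difference is that you package the case $\mathcal{I}\nsubseteq\mathcal{J}$ of (\ref{eq:unmarginalw}) into the indicator in your marginal formula from the start, whereas the paper treats it as a separate case at the end; the underlying computation is identical.
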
 
\begin{proof}
We compute
\begin{align*}
\braket{\mathbf{w}_{\mathcal{I}},\mathbf{w}_{\mathcal{J}}} 
=& 2^{-L} \sum\limits_{\gamma\in \square_{[L]}}
\prod\limits_{\ell\in \mathcal{I}} \gamma^\ell
\prod\limits_{\ell\in \mathcal{J}} \gamma^\ell\\
=& 2^{-L} \sum\limits_{\gamma\in \square_{[L]}}
\prod\limits_{\ell\in \mathcal{I} \oplus \mathcal{J}} \gamma^\ell 
\end{align*}
where we write $\mathcal{I}\oplus \mathcal{J} := (\mathcal{I}\cup \mathcal{J})\smallsetminus (\mathcal{I}\cap \mathcal{J})$. If $\mathcal{I}=\mathcal{J}$, this yields $1$. Otherwise, we can find $\ell_0\in \mathcal{I}\oplus\mathcal{J}$. Then we can cancel out the term $\gamma$ of the sum with the term $\gamma_{-\ell_0}$, which we define to be $\gamma$ with the $\ell_0-$th coordinate flipped.  This yields that $(\mathbf{w}_{\mathcal{I}})_{\mathcal{I}\subseteq [L]}$ is an orthonormal basis.

Let us observe that whenever $\gamma\in\square_{\mathcal{J}}, \mathcal{I}\subseteq \mathcal{J}$ we have
$$
\mathbf{w}_{\mathcal{I}}^{\mathcal{J}}(\gamma) = 
\sum_{\substack{\hat{\gamma}\in\square_{[L]}\\ \hat{\gamma}|_{\mathcal{J}} = \gamma}}
2^{-\frac{L}{2}} \prod_{\ell\in\mathcal{I}} \gamma^\ell
=  2^{-\frac{L}{2}} \times 2^{L-\#\mathcal{J}} \prod_{\ell\in\mathcal{I}} \gamma^\ell 
$$
We then get (\ref{eq:productw}) with
$$
\mathbf{w}_{\mathcal{I}\cap \mathcal{J}}^{\mathcal{J}} \otimes \mathbf{w}_{\mathcal{I}\cap \mathcal{J}^c}^{\mathcal{J}^c}(\gamma)
\;=\; 2^{\frac{L}{2}-\#\mathcal{J}}  \prod_{\ell\in\mathcal{I}\cap \mathcal{J}} \gamma^\ell\;\times\;
2^{\frac{L}{2}-\#\mathcal{J}^c}  \prod_{\ell\in\mathcal{I}\cap \mathcal{J}^c} \gamma^\ell
\;=\; 2^{\frac{L}{2}} \mathbf{w}_{\mathcal{I}}
$$
To get (\ref{eq:unmarginalw}) when $\mathcal{I}\subseteq \mathcal{J}$ we write
$$
\left<\mathbf{w}_{\mathcal{I}}^\mathcal{J}\;,\;\mathbf{x}^{\mathcal{J}}\right>
=
2^{\frac{L}{2}-\#\mathcal{J}}\sum\limits_{\gamma\in\square_{[L]}} \left(\prod\limits_{\ell\in\mathcal{I}}\gamma^\ell\right)x(\gamma)
= 2^{L-\#\mathcal{J}}\left<\mathbf{w}_{\mathcal{I}}\;,\;\mathbf{x}\right>
$$
When $\mathcal{I}\nsubseteq \mathcal{J}$, then we can find $\ell_0\in\mathcal{I}\smallsetminus\mathcal{J}$. We then write
$$
\mathbf{w}_{\mathcal{I}}^{\mathcal{J}}(\gamma) = 
\sum_{\substack{\hat{\gamma}\in\square_{[L]}\\ \hat{\gamma}|_{\mathcal{J}} = \gamma}}
2^{-\frac{L}{2}} \prod_{\ell\in\mathcal{I}} \gamma^\ell = 0
$$
where we cancelled out the $\hat{\gamma}$ term of the sum with the $\hat{\gamma}_{-\ell_0}$ term, that is, $\hat{\gamma}$ with the $\ell_0-$th coordinate flipped.
\end{proof}

Because $\hat{R}^A$ is the analog of $R$ on $\RR^{\square_A}$, the computations are the same. We will in fact compute $\nabla R$ explicitely in the next Theorem.
\begin{thm}\label{thm:eigenvalueR}
We have for any $\mathcal{I},\mathcal{J}\subseteq [L]$.
\begin{equation*}
\braket{\mathbf{w}_{\mathcal{I}},\nabla R(\mathbf{x})\mathbf{w}_{\mathcal{J}}}
=
    -\mathbbm{1}_{[\mathcal{I}=\mathcal{J}]}\beta_{\mathcal{I}} +
    \mathbbm{1}_{[\mathcal{J}\subsetneq \mathcal{I}]}
    \sum\limits_{\emptyset\subsetneq\mathcal{K}\subsetneq [L]}\nu(\mathcal{K})
2^{1+\frac{L}{2}}\left<\mathbf{w}_{\mathcal{I}\cap \mathcal{K}}\;,\;\mathbf{x}\right>
\mathbbm{1}_{[\mathcal{I}\cap \mathcal{K}^c = \mathcal{J}]}
\end{equation*}
In particular, we have
\begin{equation}\label{eq:triangularnablaR}
\forall\mathcal{J}\nsubseteq \mathcal{I},\qquad \braket{\mathbf{w}_{\mathcal{I}},\nabla R(\mathbf{x})\mathbf{w}_{\mathcal{J}}}=0
\end{equation}
\end{thm}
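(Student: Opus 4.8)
The plan is to compute $\braket{\mathbf{w}_{\mathcal{I}}, \nabla R(\mathbf{x})\mathbf{w}_{\mathcal{J}}}$ directly from formula (\ref{eq:nablaFI}), which expresses $\nabla R(\mathbf{x})\mathbf{h}$ as a sum over $\emptyset\subsetneq\mathcal{K}\subsetneq[L]$ of the three terms $\mathbf{x}^{\mathcal{K}}\otimes\mathbf{h}^{\mathcal{K}^c} + \mathbf{h}^{\mathcal{K}}\otimes\mathbf{x}^{\mathcal{K}^c} - \mathbf{h}$, weighted by $\nu(\mathcal{K})$. Setting $\mathbf{h}=\mathbf{w}_{\mathcal{J}}$, I would handle the three contributions separately.

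First, the $-\mathbf{h}$ term contributes $-\big(\sum_{\emptyset\subsetneq\mathcal{K}\subsetneq[L]}\nu(\mathcal{K})\big)\braket{\mathbf{w}_{\mathcal{I}},\mathbf{w}_{\mathcal{J}}} = -\mathbbm{1}_{[\mathcal{I}=\mathcal{J}]}$ by orthonormality (Lemma \ref{lem:wbasisA}), using that $\nu$ is a probability on the nonempty proper subsets. Second, for the term $\mathbf{x}^{\mathcal{K}}\otimes\mathbf{w}_{\mathcal{J}}^{\mathcal{K}^c}$: by the product rule (\ref{eq:productw}) applied to $\mathbf{w}_{\mathcal{I}} = 2^{-L/2}\mathbf{w}_{\mathcal{I}\cap\mathcal{K}}^{\mathcal{K}}\otimes\mathbf{w}_{\mathcal{I}\cap\mathcal{K}^c}^{\mathcal{K}^c}$, the scalar product $\braket{\mathbf{w}_{\mathcal{I}}, \mathbf{x}^{\mathcal{K}}\otimes\mathbf{w}_{\mathcal{J}}^{\mathcal{K}^c}}$ factorizes over $\mathcal{K}$ and $\mathcal{K}^c$ into $2^{-L/2}\braket{\mathbf{w}_{\mathcal{I}\cap\mathcal{K}}^{\mathcal{K}},\mathbf{x}^{\mathcal{K}}}\cdot\braket{\mathbf{w}_{\mathcal{I}\cap\mathcal{K}^c}^{\mathcal{K}^c},\mathbf{w}_{\mathcal{J}}^{\mathcal{K}^c}}$. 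The second factor, by orthonormality of the $\mathbf{w}$'s restricted to $\square_{\mathcal{K}^c}$ (a normalization constant aside), forces $\mathcal{I}\cap\mathcal{K}^c = \mathcal{J}\cap\mathcal{K}^c$ — and in fact, since $\mathbf{w}_{\mathcal{J}}^{\mathcal{K}^c}$ vanishes unless $\mathcal{J}\subseteq\mathcal{K}^c$ by the argument in (\ref{eq:unmarginalw}), it forces $\mathcal{J}\subseteq\mathcal{K}^c$ and $\mathcal{I}\cap\mathcal{K}^c=\mathcal{J}$. The first factor, via (\ref{eq:unmarginalw}), gives $2^{L-\#\mathcal{K}}\braket{\mathbf{w}_{\mathcal{I}\cap\mathcal{K}},\mathbf{x}}$ when $\mathcal{I}\cap\mathcal{K}\subseteq\mathcal{K}$ (always true). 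Collecting constants yields a term proportional to $\braket{\mathbf{w}_{\mathcal{I}\cap\mathcal{K}},\mathbf{x}}$ with the indicator $\mathbbm{1}_{[\mathcal{I}\cap\mathcal{K}^c=\mathcal{J}]}$. The third term $\mathbf{w}_{\mathcal{J}}^{\mathcal{K}}\otimes\mathbf{x}^{\mathcal{K}^c}$ is symmetric: using $\nu(\mathcal{K})=\nu(\mathcal{K}^c)$, it can be absorbed into (or shown to double) the second term, giving the factor $2$ in the stated formula. I would need to be careful here: the diagonal case $\mathcal{I}=\mathcal{J}$ must be extracted from the off-diagonal sum, since taking $\mathcal{K}\cap\mathcal{I}=\emptyset$ gives $\mathbf{w}_{\emptyset}=2^{-L/2}\mathbf{1}$ and $\braket{\mathbf{w}_{\emptyset},\mathbf{x}}=2^{-L/2}$ for $\mathbf{x}$ a probability measure — this is precisely what produces the $\beta_{\mathcal{I}}$ correction, since $\sum_{\mathcal{K}: \mathcal{K}\cap\mathcal{I}=\emptyset}\nu(\mathcal{K}) + \sum_{\mathcal{K}: \mathcal{I}\subseteq\mathcal{K}}\nu(\mathcal{K})$ relates to $\nu^{\mathcal{I}}(\emptyset)+\nu^{\mathcal{I}}(\mathcal{I}) = 2\nu^{\mathcal{I}}(\mathcal{I}) = 1-\beta_{\mathcal{I}}$.

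For the triangularity claim (\ref{eq:triangularnablaR}): the indicator $\mathbbm{1}_{[\mathcal{I}\cap\mathcal{K}^c=\mathcal{J}]}$ together with $\mathcal{J}\subseteq\mathcal{K}^c$ forces $\mathcal{J}\subseteq\mathcal{I}$ (since $\mathcal{J}=\mathcal{I}\cap\mathcal{K}^c\subseteq\mathcal{I}$); combined with the diagonal term, nonzero entries $\braket{\mathbf{w}_{\mathcal{I}},\nabla R(\mathbf{x})\mathbf{w}_{\mathcal{J}}}$ require $\mathcal{J}\subseteq\mathcal{I}$, hence the operator is triangular with respect to the partial order by inclusion, and the diagonal entries are $-\beta_{\mathcal{I}}$. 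I expect the main obstacle to be bookkeeping the normalization constants ($2^{-L/2}$, $2^{L-\#\mathcal{K}}$, etc.) so that they collapse to the clean $2^{1+L/2}$ in the statement, and making sure the splitting of diagonal versus strictly-below-diagonal contributions — in particular the contribution of $\mathcal{K}$ with $\mathcal{I}\cap\mathcal{K}^c=\mathcal{I}$ (i.e. $\mathcal{K}\cap\mathcal{I}=\emptyset$) giving the $\mathcal{J}=\mathcal{I}$ diagonal piece through $\mathbf{w}_{\emptyset}$ — is cleanly separated from the $\mathbbm{1}_{[\mathcal{J}\subsetneq\mathcal{I}]}$ sum. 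A secondary subtlety is verifying that using the symmetrized $\nu$ (for which $\nu(\mathcal{K})=\nu(\mathcal{K}^c)$, legitimate by the reduction in Section \ref{sec:defmodel}) correctly merges the two off-diagonal contributions into the single stated sum with its factor of $2$.
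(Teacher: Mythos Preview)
Your proposal is correct and follows essentially the same route as the paper: start from (\ref{eq:nablaFI}), use the symmetry $\nu(\mathcal{K})=\nu(\mathcal{K}^c)$ to merge the two tensor terms into $2\braket{\mathbf{w}_{\mathcal{I}},\mathbf{x}^{\mathcal{K}}\otimes\mathbf{w}_{\mathcal{J}}^{\mathcal{K}^c}}$, factorize via (\ref{eq:productw}) and (\ref{eq:unmarginalw}) to obtain the indicator $\mathbbm{1}_{[\mathcal{I}\cap\mathcal{K}^c=\mathcal{J}]}$, and then peel off the diagonal $\mathcal{J}=\mathcal{I}$ (i.e.\ $\mathcal{K}\cap\mathcal{I}=\emptyset$) using $\braket{\mathbf{w}_{\emptyset},\mathbf{x}}=2^{-L/2}$ to recover $-\beta_{\mathcal{I}}$. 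The bookkeeping you flag (the $2^{1+L/2}$ constant and the diagonal extraction) is exactly what the paper's proof carries out explicitly, so there is no missing idea.
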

\begin{rem}
A well-known method to handle the recombinator is Haldane linearization \cite{BaakeHaldane_linearization}. This method relies on considering linkage, not on subsets $\mathcal{I}\subseteq A$, but rather on partitions of $A$. This approach may prove necessary if we want to control linkage on subsets $A$ with arbitrary size, but will not be needed here.
\end{rem}
\begin{proof}
According to (\ref{eq:nablaFI}) (replacing $[L]$ by the set $A$),
\begin{align*}
\braket{\mathbf{w}_{\mathcal{I}},\nabla R(\mathbf{x})\mathbf{w}_{\mathcal{J}}}
=&\sum\limits_{\emptyset\subsetneq\mathcal{K}\subsetneq [L]}\nu(\mathcal{K})
\left<\mathbf{w}_{\mathcal{I}}\;,\;\mathbf{x}^{\mathcal{K}}\otimes \mathbf{w}_{\mathcal{J}}^{\mathcal{K}^{c}} 
+ \mathbf{w}_{\mathcal{J}}^{\mathcal{K}}\otimes \mathbf{x}^{\mathcal{K}^{c}} 
- \mathbf{w}_{\mathcal{J}}\right>\\
=&\sum\limits_{\emptyset\subsetneq\mathcal{K}\subsetneq [L]}\nu(\mathcal{K})
\left(\left<\mathbf{w}_{\mathcal{I}}\;,\;\mathbf{x}^{\mathcal{K}}\otimes \mathbf{w}_{\mathcal{J}}^{\mathcal{K}^c}\right>
+ \left<\mathbf{w}_{\mathcal{I}}\;,\;\mathbf{w}_{\mathcal{J}}^{\mathcal{K}}\otimes \mathbf{x}^{\mathcal{K}^c}\right>
- \mathbbm{1}_{[\mathcal{I}=\mathcal{J}]}\right)
\end{align*}
 Using $\nu(\mathcal{K}) = \nu(\mathcal{K}^c)$, we can rewrite this
\begin{equation}\label{eq:tmptriangR}
\braket{\mathbf{w}_{\mathcal{I}},\nabla R(\mathbf{x})\mathbf{w}_{\mathcal{J}}} 
=\sum\limits_{\emptyset\subsetneq\mathcal{K}\subsetneq [L]}\nu(\mathcal{K})
\left(2\left<\mathbf{w}_{\mathcal{I}}\;,\;\mathbf{x}^{\mathcal{K}}\otimes \mathbf{w}_{\mathcal{J}}^{\mathcal{K}^c}\right>
- \mathbbm{1}_{[\mathcal{I}=\mathcal{J}]}\right)
\end{equation}
Using (\ref{eq:productw}) we get
\begin{align*}
\left<\mathbf{w}_{\mathcal{I}}\;,\;\mathbf{x}^{\mathcal{K}}\otimes \mathbf{w}_{\mathcal{J}}^{\mathcal{K}^c}\right>
=& 2^{-\frac{L}{2}}\left<\mathbf{w}_{\mathcal{I}\cap \mathcal{K}}^\mathcal{K}\;,\;\mathbf{x}^{\mathcal{K}}\right>
\left<\mathbf{w}_{\mathcal{I}\cap \mathcal{K}^c}^{\mathcal{K}^c},\mathbf{w}_{\mathcal{J}}^{\mathcal{K}^c}\right>\\
=& 2^{\frac{L}{2}}\left<\mathbf{w}_{\mathcal{I}\cap \mathcal{K}}\;,\;\mathbf{x}\right>
\mathbbm{1}_{[\mathcal{I}\cap \mathcal{K}^c = \mathcal{J}]}
\end{align*}
where in the second equality we used (\ref{eq:unmarginalw}) twice. We thus obtain from (\ref{eq:tmptriangR})
\begin{align}
\braket{\mathbf{w}_{\mathcal{I}},\nabla R(\mathbf{x})\mathbf{w}_{\mathcal{J}}} 
=&\sum\limits_{\emptyset\subsetneq\mathcal{K}\subsetneq [L]}\nu(\mathcal{K})
\left(2^{1+\frac{L}{2}}\left<\mathbf{w}_{\mathcal{I}\cap \mathcal{K}}\;,\;\mathbf{x}\right>
\mathbbm{1}_{[\mathcal{I}\cap \mathcal{K}^c = \mathcal{J}]}
- \mathbbm{1}_{[\mathcal{I}=\mathcal{J}]}\right)\notag\\
=&\sum\limits_{\emptyset\subsetneq\mathcal{K}\subsetneq [L]}\nu(\mathcal{K})
2^{1+\frac{L}{2}}\left<\mathbf{w}_{\mathcal{I}\cap \mathcal{K}}\;,\;\mathbf{x}\right>
\mathbbm{1}_{[\mathcal{I}\cap \mathcal{K}^c = \mathcal{J}]}
\quad-\; \mathbbm{1}_{[\mathcal{I}=\mathcal{J}]}\beta_{[L]}\label{eq:tmp(wI,wJ)}
\end{align}
where in the last equality we used
$$
\sum\limits_{\emptyset\subsetneq\mathcal{K}\subsetneq [L]} \nu(\mathcal{K}) = \beta_{[L]}
$$
Notice that the sum in (\ref{eq:tmp(wI,wJ)}) can only be nonzero if $\mathcal{J}\subseteq \mathcal{I}$. When $\mathcal{J}\neq \mathcal{I}$, we get the result. 
When $\mathcal{I}=\mathcal{J}$ we write
\begin{align*}
\braket{\mathbf{w}_{\mathcal{I}},\nabla R(\mathbf{x})\mathbf{w}_{\mathcal{I}}} 
=& \sum\limits_{\emptyset\subsetneq\mathcal{K}\subsetneq [L]}\nu(\mathcal{K})
2^{1+\frac{L}{2}}\left<\mathbf{w}_{\emptyset}\;,\;\mathbf{x}\right>
\mathbbm{1}_{[\mathcal{I}\cap \mathcal{K}^c = \mathcal{I}]}
\quad-\; \beta_{[L]}\\
=& \sum\limits_{\emptyset\subsetneq\mathcal{K}\subsetneq [L]}2\nu(\mathcal{K})
\mathbbm{1}_{[\mathcal{I}\subseteq \mathcal{K}^c]}
\quad-\; \beta_{[L]}
\end{align*}
where we used $\braket{\mathbf{w}_\emptyset,\mathbf{x}}=2^{-\frac{L}{2}}$. If $\mathcal{I}=\emptyset$, the sum is equal to $2\beta_{[L]}$ and we get
$$
\braket{\mathbf{w}_{\emptyset},\nabla R(\mathbf{x})\mathbf{w}_{\emptyset}}
= \beta_{[L]} = -\beta_{\emptyset}
$$
If $\mathcal{I}\neq\emptyset$, we can extend the sum to $\mathcal{K}\in\{\emptyset,[L]\}$ and write
\begin{align*}
\sum\limits_{\emptyset\subsetneq\mathcal{K}\subsetneq [L]}2\nu(\mathcal{K})\mathbbm{1}_{[\mathcal{I}\subseteq \mathcal{K}^c]} =&
\sum\limits_{\emptyset\subseteq\mathcal{K}\subseteq [L]}2\nu(\mathcal{K})\mathbbm{1}_{[\mathcal{I}\subseteq \mathcal{K}^c]}
\quad - \;2\nu(\emptyset)\\
=&
2\nu^{\mathcal{I}}(\mathcal{I})
 - (1-\beta_{[L]})
\end{align*}
It follows
\begin{align*}
\braket{\mathbf{w}_{\mathcal{I}},\nabla R(\mathbf{x})\mathbf{w}_{\mathcal{I}}} 
=& 2\nu^{\mathcal{I}}(\mathcal{I}) - (1-\beta_{[L]}) - \beta_{[L]}\\
=& 2\nu^{\mathcal{I}}(\mathcal{I}) - 1 \\
=& -\beta_{\mathcal{I}}
\end{align*}
\end{proof}

\subsection{Controlling linkage disequilibrium over a small subset \label{sec:controlL2itself}}
For any $A\subseteq [L]$ with $\#A\geq2$, recall from (\ref{eq:defrA})
$$
    r_A:= \min_{\substack{\ell_1,\ell_2\in A\\ \ell_1\neq\ell_2}} r_{\{\ell_1,\ell_2\}}
$$
where $r_{\{\ell_1,\ell_2\}}$ is the probability of a recombination event between $\ell_1$ and $\ell_2$, and in particular from (\ref{eq:defbeta}) $r_{\{\ell_1,\ell_2\}}=\beta_{\{\ell_1,\ell_2\}}$. From (\ref{eq:orderproperty0}) we have for any $\mathcal{J}\subseteq A$
\begin{equation}\label{eq:orderrAbeta1}
r_A \leq \beta_{\mathcal{J}} \leq 1.
\end{equation}

The goal of this subsection is to prove
\begin{prop}\label{prop:controllink}
Let $T>0$.
Assume that  $\eta^2 := \frac{1}{\rho r_A}\leq 1$ and $\rho \geq e$. There exists a constant $C$ independent of $(A,L)$ such that 
for every $\varepsilon \in[0,T]$, 
$$
\forall A\subseteq [L],\#A\leq 3,\qquad \EE\left[\sup\limits_{t\in [\varepsilon,T]} ||\mathbf{X}_t^A - \pi(\mathbf{X}_t^A)||_2\right] \leq C\left( \frac{1}{\rho r_A\varepsilon} 
+\frac{L}{\rho r_A} + \sqrt{\frac{\ln(\rho)}{\rho r_A}}\right).
$$
\end{prop}
We will prove the result for $\#A=3$. We start with two Lemmas.
\begin{lem}\label{lem:linearizeR}
Consider $A\subset [L]$ with $\#A\leq 3$. Then
\begin{equation}\label{eq:calculid-piR}
\forall \mathbf{x}\in\XX^A,\qquad
\left(\mbox{Id}-\nabla \pi(\mathbf{x})\right)\hat{R}^A(\mathbf{x}) 
= \nabla\hat{R}^A(\mathbf{x})(\mathbf{x}-\pi(\mathbf{x}))
\end{equation}
where we abusively write $\nabla \pi$ for the $\square_A\times \square_A$ jacobian of $\pi$, seen as a $\RR^{\square_{A}}-$valued function on $\XX^A$.
\end{lem}
\begin{proof}
Because of (\ref{eq:linearizeR}) in Lemma \ref{lem:nablaFI}, we only need to show
\begin{equation}\label{eq:nablapiReq0}
\forall \mathbf{x}\in\XX^A,\qquad
\nabla \pi(\mathbf{x}) \hat{R}^A(\mathbf{x}) = 0
\end{equation}
Fix $\mathbf{x}_0\in \XX^A$. We define $(\mathbf{x}_t)_{t\geq 0}$ as the solution to
$$
\frac{\dd}{\dd t}\mathbf{x}_t = \hat{R}^A(\mathbf{x}_t)
$$
with initial condition $\mathbf{x}_0$. By Lemma \ref{lem:defnolink}, we have
$$\forall t\geq 0, \qquad \pi(\mathbf{x}_t) = \pi(\mathbf{x}_0).$$
Taking the time derivative of $\pi(\mathbf{x}_t)$, we get
$$
\mathbf{0} = \frac{\dd}{\dd t}\pi(\mathbf{x}_t) = \nabla \pi(\mathbf{x}_t) \hat{R}^A(\mathbf{x}_t)
$$
Evaluating the right-hand side at $t=0$ gives (\ref{eq:nablapiReq0}).
\end{proof}

\begin{lem}\label{lem:controlOU}
Consider a continuous martingale $M_t$ with quadratic variation uniformly bounded by $C_0>0$. Then we can find a universal constant $C(C_0)$ such that for any $\lambda\geq 1,T>0$
$$
\EE\left[\sup_{t\in[0,T]} \left|\int_0^t e^{-\lambda(t-u)} \dd M_u\right|\right] \leq C\sqrt{\frac{1+\ln_+(2\lambda T)}{\lambda}}
$$
with $\ln_+(x)=(\ln(x))\vee 0$.
\end{lem}
\begin{proof}
Let us write
$$
Z_t:= \int_0^t e^{\lambda u} \dd M_u \qquad;\qquad Y_t:= \int_0^t e^{-\lambda(t-u)} \dd M_u
$$
such that
\begin{equation}\label{eq:tmpornsteind}
Y_t = e^{-\lambda t}Z_t
\end{equation}
The martingale $(Z_t)_{t\in[0,T]}$ has quadratic variation 
$$
\braket{Z}_t^{QV} = \int_0^t e^{2\lambda u} \dd \braket{M}_u^{QV} \leq C_0\frac{e^{2\lambda t}}{2\lambda}
$$
Let us define the iterated logarithm
$$
\ln_{(2)}(x) := \ln_+(\ln_+(x)).
$$
Since $\lambda\geq 1$
\begin{align*}
\ln_{(2)}\left(\frac{e^{2\lambda t}}{2\lambda}\right) 
 \leq& \ln_+(2\lambda T).
\end{align*}
Therefore 
$$
F(t):=\braket{Z}_t^{QV} \left(1+\ln_{(2)}(\braket{Z}_t^{QV})\right) \leq C_0\frac{e^{2\lambda t}}{2\lambda} \left(1 + \ln_{(2)}(C_0) + \ln_+(2\lambda T)\right)
$$
It follows from (\ref{eq:tmpornsteind})
\begin{align*}
\sup_{t\in [0,T]}|Y_t| =& \sup_{t\in [0,T]} e^{-\lambda t} |Z_t|\\
=& \sup_{t\in [0,T]} e^{-\lambda t}\sqrt{F(t)} \times \frac{|Z_t|}{\sqrt{F(t)}}\\
\leq& \sqrt{C_0}\sqrt{\frac{1+\ln_+(C_0)+\ln_+(2\lambda T)}{2\lambda}}\sup_{t\in [0,T]} \frac{|Z_t|}{\sqrt{F(t)}}
\end{align*}
We show in Appendix \ref{sec:apLIL} the existence of a universal constant $C>0$ (only depending on $C_0$) such that
$$
\EE\left[\sup_{t\in [0,T]} \frac{Z_t}{\sqrt{F(t)}}\right] \leq C.
$$
This completes the proof of the lemma.
\end{proof}

\begin{proof}[Proof of Proposition \ref{prop:controllink}]
For $\mathcal{I}\subseteq A$, define by analogy with (\ref{eq:Ilinkagevect})
$$\hat{\mathbf{w}}_{\mathcal{I}}^A = \left(2^{-\#A/2}\prod_{\ell\in A} \gamma^\ell\right)_{\gamma\in\square_A} $$
One may check that $\hat{\mathbf{w}}_{\mathcal{I}}^A = 2^{(L-\#A)/2}\mathbf{w}_{\mathcal{I}}^A$.
By the analog of Lemma \ref{lem:wbasisA}, $(\hat{\mathbf{w}}_{\mathcal{I}}^A)_{\mathcal{I}\subseteq A}$ is an orthonormal basis of $\RR^{\square_A}$ and we have for any $\mathbf{y}\in\RR^{\square_{A}}$
$$
||\mathbf{y}||_2^2 = \sum_{\mathcal{I}\subseteq A} \braket{\hat{\mathbf{w}}_{\mathcal{I}}^A,\mathbf{y}}^2.
$$
Therefore, to get the result we only need to control, for all $\mathcal{I}\subseteq A$,
$$Y_t^{\mathcal{I}} := \braket{\hat{\mathbf{w}}_{\mathcal{I}}^A,\mathbf{X}_t^A - \pi(\mathbf{X}_t^A)}.$$
Note that because $\hat{\mathbf{w}}_{\mathcal{I}}^A$ and $\mathbf{X}_t^A - \pi(\mathbf{X}_t^A)$ have coefficients bounded by 1, and $\RR^A$ has dimension at most $2^3$, then $\braket{\hat{\mathbf{w}}_{\mathcal{I}}^A,\mathbf{X}_t^A}$ and $Y_t^{\mathcal{I}}$ are uniformly bounded by a constant $C$ independent of $(L,A)$.

Recall from the definition of $\pi$ in (\ref{eq:defpi}) that for any $\mathbf{x}\in\XX^A$
\begin{equation}\label{eq:x-pi(x)ell}
Y_t^\emptyset = \braket{\hat{\mathbf{w}}_{\emptyset}^A,\mathbf{x}_t - \pi(\mathbf{x}_t)}
= 0
\quad\qquad;\quad\qquad
\forall \ell\in A,\qquad
Y_t^{\{\ell\}}=\braket{\hat{\mathbf{w}}_{\{\ell\}}^A,\mathbf{x}_t - \pi(\mathbf{x}_t)}
= 0
\end{equation}
It remains to consider $\#\mathcal{I}\in \{2,3\}$. We apply It\^o's formula to write
$$
    \dd \left(\mathbf{X}_u^A - \pi(\mathbf{X}_u^A)\right) = \left(\mbox{Id}-\nabla\pi(\mathbf{X}_u^A)\right)\dd \mathbf{X}_u^A
    - \sum_{\gamma_1,\gamma_2\in\square_A} \partial_{\gamma_1,\gamma_2}\pi(\mathbf{X}_u^A)\;
    \dd \braket{X^A(\gamma_1),X^A(\gamma_2)}_u^{QV}
$$
Using Proposition \ref{prop:consistency} this can be rewritten
$$
\dd \left(\mathbf{X}_u^A - \pi(\mathbf{X}_u^A)\right) = \rho\left(\mbox{Id}-\nabla\pi(\mathbf{X}_u^A)\right)\hat{R}^A(\mathbf{X}_u^A)\dd t + \mathbf{F}_u\dd u
+ \dd \mathbf{M}_u
$$
where
\begin{align*}
    \mathbf{F}_t :=& \left(\mbox{Id}-\nabla\pi(\mathbf{X}_t^A)\right)\left(\hat{\Theta}^{A}(\mathbf{X}_t^A) + LS^{A}(\mathbf{X}_t) \right)
    - \sum_{\gamma_1,\gamma_2\in\square_A} \partial_{\gamma_1,\gamma_2}\pi(\mathbf{X}_t^A) \frac{\dd}{\dd t} \braket{X^A(\gamma_1),X^A(\gamma_2)}_t^{QV}\;
\\
    \dd\mathbf{M}_t :=& \left(\mbox{Id}-\nabla\pi(\mathbf{X}_t^A)\right)
                        \hat{\Sigma}^{A}(\mathbf{X}_t^A)\dd\hat{\mathbf{B}}_t^A.
\end{align*}
In particular, $\mathbf{F}_t$ is smaller than $CL$  and $\mathbf{M}_t$ is a continuous martinale with quadratic variation uniformly smaller than $C$ for some constant $C>0$ independent of $(A,L)$.
Finally, Lemma \ref{lem:linearizeR} yields for $\mathcal{I}\subseteq A$
\begin{equation}\label{eq:tmpwx-pi(x)}
    \dd Y_u^{\mathcal{I}} = 
    \rho\left<\hat{\mathbf{w}}_{\mathcal{I}}^A,\nabla\hat{R}^A(\mathbf{X}_u^A)
    \left(\mathbf{X}_u^A - \pi(\mathbf{X}_u^A)\right)\right>\dd u
    + \left<\hat{\mathbf{w}}_{\mathcal{I}}^A,\mathbf{F}_u\right>\dd u 
    + \left<\hat{\mathbf{w}}_{\mathcal{I}}^A,\dd \mathbf{M}_u\right>
\end{equation}
\bigskip

\textbf{Case} $\#\mathcal{I}=2$:\\
Theorem \ref{thm:eigenvalueR} and (\ref{eq:x-pi(x)ell}) imply
$$
    \left<\hat{\mathbf{w}}_{\mathcal{I}}^A,\nabla\hat{R}^A(\mathbf{X}_u^A)
    \left(\mathbf{X}_u^A - \pi(\mathbf{X}_u^A)\right)\right>
    = -\beta_{\mathcal{I}} \left<\hat{\mathbf{w}}_{\mathcal{I}}^A,\;\mathbf{X}_u^A - \pi(\mathbf{X}_u^A)\right>
    = -\beta_{\mathcal{I}} Y_u^{\mathcal{I}}
$$
Therefore (\ref{eq:tmpwx-pi(x)}) becomes
$$
    \dd Y_u^{\mathcal{I}} = 
    -\rho \beta_{\mathcal{I}} Y_u^{\mathcal{I}}\dd u
    + \left<\hat{\mathbf{w}}_{\mathcal{I}}^A,\mathbf{F}_u\right>\dd u 
    + \left<\hat{\mathbf{w}}_{\mathcal{I}}^A,\dd \mathbf{M}_u\right>
$$
It can be checked that this implies
\begin{equation}\label{eq:solvedI2}
Y_t^{\mathcal{I}} = Y_0^{\mathcal{I}} e^{-\rho \beta_{\mathcal{I}}t} 
+ \int_0^t e^{-\rho \beta_{\mathcal{I}}(t-u)}\left<\hat{\mathbf{w}}_{\mathcal{I}}^A,\mathbf{F}_u\right>\dd u
+ \int_0^t e^{-\rho \beta_{\mathcal{I}}(t-u)}\left<\hat{\mathbf{w}}_{\mathcal{I}}^A,\dd \mathbf{M}_u\right>
\end{equation}

The fact that $Y_0^{\mathcal{I}}$ is uniformly bounded means the first term is smaller than $Ce^{-\rho \beta_{\mathcal{I}}\varepsilon}$ for any $t\in[\varepsilon,T]$.
We use $e^{-x}\leq \frac{1}{x}$ for $x\geq 0$ to write
\begin{equation*}
Y_0^{\mathcal{I}} e^{-\rho \beta_{\mathcal{I}}t}  \leq \frac{C}{\rho \beta_{\mathcal{I}} \varepsilon}
\leq \frac{C}{\rho r_A \varepsilon}
\end{equation*}
from (\ref{eq:orderrAbeta1}).
Because $\mathbf{F}_t$ is smaller than $CL$, the first integral is smaller than $C\frac{L}{\rho\beta_{\mathcal{I}}}$. For the second integral, we use Lemma \ref{lem:controlOU} applied with $\lambda = \rho \beta_{\mathcal{I}}$. Because $\rho \beta_{\mathcal{I}}\geq \rho r_A\geq 1$ by assumption, this Lemma yields a a bound on the second integral of
$$ C\sqrt{\frac{1+\ln_+(\rho \beta_{\mathcal{I}} 2T)}{\rho \beta_{\mathcal{I}}}}\leq C\sqrt{\frac{\ln(\rho)}{\rho r_A}}
.$$
using (\ref{eq:orderrAbeta1}) and $\rho \geq e$, for some constant $C$ independent of $(\varepsilon,A,L)$.
We thus obtain from (\ref{eq:solvedI2})
$$
\EE\left[\sup_{t\in[\varepsilon,T]}|Y_t^{\mathcal{I}}|\right] \leq C\left(\frac{1}{\rho r_A \varepsilon} 
+ \frac{L}{\rho r_A}
+\sqrt{\frac{\ln(\rho)}{\rho r_A}}\right)
$$
\bigskip

\textbf{Case} $\#\mathcal{I}=3$:\\
When $\mathcal{I}=A$, Theorem \ref{thm:eigenvalueR} and (\ref{eq:x-pi(x)ell}) imply
\begin{multline*}
    \left<\hat{\mathbf{w}}_A^A,\nabla\hat{R}^A(\mathbf{X}_u^A)
    \left(\mathbf{X}_u^A - \pi(\mathbf{X}_u^A)\right)\right>
    = -\beta_{A} \left<\hat{\mathbf{w}}_{A}^A,\;\mathbf{X}_u^A - \pi(\mathbf{X}_u^A)\right> \\
    + \sum_{\substack{\mathcal{J}\subsetneq A\\ \#\mathcal{J}=2}} \nu^A(\mathcal{J}) 2^{1+\frac{\# A}{2}}
    \left<\hat{\mathbf{w}}_{A\smallsetminus\mathcal{J}}^A,\;\mathbf{X}_u^A\right> 
    \left<\hat{\mathbf{w}}_{\mathcal{J}}^A,\;\mathbf{X}_u^A - \pi(\mathbf{X}_u^A)\right> 
\end{multline*}
Therefore, (\ref{eq:tmpwx-pi(x)}) becomes
$$
    \dd Y_t^{A} = 
    -\rho \beta_{A} Y_t^{A}\dd t
    + \rho  \sum_{\substack{\mathcal{J}\subsetneq A\\ \#\mathcal{J}=2}} \nu^A(\mathcal{J}) 2^{1+\frac{\# A}{2}}
    \left<\hat{\mathbf{w}}_{A\smallsetminus\mathcal{J}}^A,\;\mathbf{X}_t^A\right> 
    Y_t^{\mathcal{J}} \dd t
    + \left<\hat{\mathbf{w}}_{A}^A,\mathbf{F}_t\right>\dd t 
    + \left<\hat{\mathbf{w}}_{A}^A,\dd \mathbf{M}_t\right>
$$
which can be solved as
\begin{multline*}
Y_t^{A} = Y_0^{A} e^{-\rho \beta_{A}t} 
+ \rho  \sum_{\substack{\mathcal{J}\subsetneq A\\ \#\mathcal{J}=2}} \nu^A(\mathcal{J}) 2^{1+\frac{\# A}{2}}
   \int_0^t e^{-\rho \beta_A(t-u)}\left<\hat{\mathbf{w}}_{A\smallsetminus\mathcal{J}}^A,\;\mathbf{X}_u^A\right> 
    Y_u^{\mathcal{J}}\dd u\\
+ \int_0^t e^{-\rho \beta_{A}(t-u)}\left<\hat{\mathbf{w}}_{A}^A,\mathbf{F}_u\right>\dd u
+ \int_0^t e^{-\rho \beta_{A}(t-u)}\left<\hat{\mathbf{w}}_{A}^A,\dd \mathbf{M}_u\right>.
\end{multline*}
The first, third and fourth terms are handled as in the case $\#\mathcal{I}=2$. For the second term, let us define
\begin{align*}
    b_1(t) :=& \rho \beta_A\int_0^t e^{-\rho \beta_A(t-u)} \times 
    \left|Y_0^{\mathcal{J}}\right| e^{-\rho \beta_{\mathcal{J}}u} \dd u\\
    b_2(t) :=& \rho \beta_A\int_0^t e^{-\rho \beta_A(t-u)} \left( \int_0^u e^{-\rho \beta_{\mathcal{J}}(u-v)}
    \left|\left<\hat{\mathbf{w}}_{\mathcal{J}}^A,\mathbf{F}_v\right>\right|
    \dd v\right)\dd u\\
    b_3(t) :=& \rho \beta_A
    \int_0^t e^{-\rho \beta_A(t-u)} \left| \int_0^u e^{-\rho \beta_{\mathcal{J}}(u-v)}
    \left<\hat{\mathbf{w}}_{\mathcal{J}}^A,\dd \mathbf{M}_v\right>\right|\dd u
\end{align*}
Notice from (\ref{eq:defbeta}) that $\nu^A(\mathcal{J}) \leq \beta_{\mathcal{J}}$. This, along with 
the boundedness of $\left<\hat{\mathbf{w}}_{A\smallsetminus\mathcal{J}}^A,\;\mathbf{X}_u^A\right>$
  and (\ref{eq:solvedI2}) lets us write for $\mathcal{J}\subseteq A$ with $\#\mathcal{J}=2$
\begin{equation}\label{eq:b1b2b3}
\rho \nu^A(\mathcal{J})
   \int_0^t e^{-\rho \beta_A(t-u)}
   \left|\left<\hat{\mathbf{w}}_{A\smallsetminus\mathcal{J}}^A,\;\mathbf{X}_u^A\right> 
    Y_u^{\mathcal{J}}\right|\dd u
    \leq C(b_1(t) + b_2(t) + b_3(t))
\end{equation}
where $C$ is a constant independent of $(\varepsilon,A,L)$.
We now control $b_1$ by writing
\begin{align*}
b_1(t) =&\rho \beta_{\mathcal{J}}\int_0^t e^{-\rho \beta_A(t-u)} 
     e^{-\rho \beta_{\mathcal{J}}u} \dd u \times \left|Y_0^{\mathcal{J}}\right|
    +\rho(\beta_A-\beta_{\mathcal{J}}) \int_0^t e^{-\rho \beta_A(t-u)} e^{-\rho \beta_{\mathcal{J}}u} \dd u
    \times \left|Y_0^{\mathcal{J}}\right|\\
    =& \rho \beta_{\mathcal{J}} e^{-\rho \beta_{\mathcal{J}}t} \int_0^t e^{-\rho (\beta_A-\beta_{\mathcal{J}})(t-u)} 
      \dd u \times \left|Y_0^{\mathcal{J}}\right|
    + e^{-\rho \beta_{\mathcal{J}}t} \int_0^t \rho(\beta_A-\beta_{\mathcal{J}}) e^{-\rho (\beta_A-\beta_{\mathcal{J}})(t-u)} \dd u
    \times \left|Y_0^{\mathcal{J}}\right|
\end{align*}
Because of the order property (\ref{eq:orderproperty0}), $\beta_A\geq \beta_{\mathcal{J}}$. So the first integral is smaller than $t$, the second one is smaller than one. We thus obtain
$$
b_1(t) \leq C(1+\rho \beta_{\mathcal{J}}t) e^{-\rho\beta_{\mathcal{J}} t}
$$
We can use the inequality
$$
\forall x,h\geq0,\qquad
(x+h)e^{-(x+h)} \leq \frac{C}{x}
$$
with $x=\rho \beta_{\mathcal{J}}\varepsilon$ and $h=\rho \beta_{\mathcal{J}}(t-\varepsilon)$ to conclude
\begin{equation}\label{eq:b1control}
\sup_{t\in[\varepsilon,T]}
b_1(t) \leq \frac{C}{\rho\beta_{\mathcal{J}} \varepsilon} \leq \frac{C}{\rho r_A \varepsilon}
\end{equation}
for some universal constant $C$, using $r_A\leq \beta_{\mathcal{J}}$.

We now control $b_2$. Because $\mathbf{F}_t$ is of order $L$ we can write
\begin{equation}\label{eq:b2control}
b_2(t) \leq CL \rho \beta_A\int_0^t e^{-\rho \beta_A(t-u)} \left( \int_0^u e^{-\rho \beta_{\mathcal{J}}(u-v)}
    \dd v\right)\dd u
\quad \leq \frac{CL}{\rho \beta_{\mathcal{J}}}     \leq \frac{CL}{\rho r_A}
\end{equation}
for $C$ independent of $(\varepsilon,A,L)$.
Finally, we turn to $b_3$. Because the martingale $\mathbf{M}_u$ has uniformly bouded quadratic variation, we can use Lemma \ref{lem:controlOU} to write
\begin{align*}
\EE\left[\sup_{t\in[0,T]}b_3(t)\right]
    \leq& \rho \beta_A\int_0^te^{-\rho\beta_A(t-u)}\dd u 
    \EE\left[\sup_{t\in[0,T]}\left| \int_0^u e^{-\rho \beta_{\mathcal{J}}(u-v)}
    \left<\hat{\mathbf{w}}_{\mathcal{J}}^A,\dd \mathbf{M}_v\right>\right|\right]
    \leq \sqrt{\frac{1+\ln_+(\rho \beta_{\mathcal{J}}2T)}{\rho \beta_{\mathcal{J}}}}\\
    \leq& C\sqrt{\frac{\ln(\rho)}{\rho r_A}}
\end{align*}
using (\ref{eq:orderrAbeta1}) and $\rho\geq e$.
We obtain the result by combining this with (\ref{eq:b1control}) and (\ref{eq:b2control}) in (\ref{eq:b1b2b3}).
\end{proof}

\subsection{Summing controls of linkage equilibrium across loci}
We now prove the following estimates
\begin{prop}\label{prop:supDeltaS}
Recall from (\ref{def:epsilon})
\begin{gather*}
\varepsilon_L:=\frac{1}{\sqrt{\rho r^{**}}}
\end{gather*}
Let us assume (\ref{ass:rhobeta1}), that is,
\begin{gather*}
\rho r^{**} \gg L^{2}\ln(\rho) 
\end{gather*}
We have
\begin{align}
\lim_{L\to+\infty}\sum\limits_{\ell\in[L]}
\EE\left[\int_0^T |S^{\ell}(\mathbf{X}_t)- S^{\ell}(\pi(\mathbf{X}_t))|\dd t\right] 
=& 0
\label{eq:supDeltaS}\\
\lim_{L\to+\infty}
\frac{1}{L}\sum\limits_{\substack{\ell_1,\ell_2\in [L]\\\ell_1\neq \ell_2}}
\EE\left[
\sup_{t\in [\varepsilon_L,T]} \left|D^{\ell_1,\ell_2}(\mathbf{X}_t)\right|\dd t\right] =& 0
\label{eq:supDell}
\end{align}
Consider a sequence integers $\ell^L\equiv \ell\in[L]$ satisfying Assumptions (\ref{ass:rhorell}).
Then
\begin{align}\label{eq:supDeltaSell0}
\lim_{L\to+\infty}
L\EE\left[\int_0^T |S^{\ell}(\mathbf{X}_t)- S^{\ell}(\pi(\mathbf{X}_t))|\dd t\right] 
=& 0 
\end{align}
Finally, consider $\ell_1^L\equiv \ell_1,\ell_2^L\equiv \ell_2$ such that
\begin{equation}
\rho r_{\{\ell_1,\ell_2\}} \gg L\label{eq:rhorell1ell2}
\end{equation}
Then
\begin{equation}\label{eq:Dell1ell2}
\lim_{L_\to \infty}
\EE\left[\int_0^T |D^{\ell_1,\ell_2}(\mathbf{X}_t)|\right] = 0
\end{equation}
\end{prop}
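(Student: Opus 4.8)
The plan is to assemble the four conclusions from three ingredients already proved: Proposition~\ref{prop:boundDSwithL2}, which bounds $|D^{\ell_1,\ell_2}(\mathbf{x})|$ and $|S^{\ell_0}(\mathbf{x})-S^{\ell_0}(\pi(\mathbf{x}))|$ by the discrepancies $\|\mathbf{x}^A-\pi(\mathbf{x}^A)\|_2$ over subsets $A$ with $\#A\le3$; Proposition~\ref{prop:controllink}, which controls these discrepancies; and the summation estimates of Lemma~\ref{lem:domsumA}. First I would record a generic bound valid for every $A\subseteq[L]$ with $\#A\le3$ and every cutoff $\varepsilon\in(0,T]$: splitting $\int_0^T\|\mathbf{X}_t^A-\pi(\mathbf{X}_t^A)\|_2\,dt$ at $\varepsilon$, bounding the integrand by a universal constant on $[0,\varepsilon]$ and invoking Proposition~\ref{prop:controllink} on $[\varepsilon,T]$, one gets
\[
\EE\Big[\int_0^T\|\mathbf{X}_t^A-\pi(\mathbf{X}_t^A)\|_2\,dt\Big]\;\le\;C\varepsilon\;+\;CT\Big(e^{-\rho r^A\varepsilon}+\tfrac{L}{\rho r^A}+\tfrac{1+\sqrt{\ln(\rho r^A)}}{\sqrt{\rho r^A}}\Big).
\]
Alongside it I would note the uniform bounds $|S^\ell(\mathbf{x})|\le C$ and $|D^{\ell_1,\ell_2}(\mathbf{x})|\le1$, and the elementary inequality $\rho r^{[L]}\ge\rho r^{**}/L^2$, which under (\ref{ass:rhobeta1}) forces $\rho r^{[L]}\to\infty$, so that $\rho r^A\ge1$ (needed for Proposition~\ref{prop:controllink}) and $\rho r^{[L]}\ge e$ (needed for Lemma~\ref{lem:domsumA}) hold for $L$ large.

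For the averaged statements (\ref{eq:supDeltaS}) and (\ref{eq:supDell}) I would use the common cutoff $\varepsilon=\varepsilon_L=(\rho r^{**})^{-1/2}$. Feeding the generic bound into the second inequality of Proposition~\ref{prop:boundDSwithL2}, summing over $\ell$, and applying Lemma~\ref{lem:domsumA}~(\ref{eq:sumrell}) together with $\tfrac1L\sum_\ell 1/r_\ell^{*}=1/r^{**}$ and Jensen, the left-hand side of (\ref{eq:supDeltaS}) is dominated by $C\big(\sqrt{L^2/(\rho r^{**})}+L^2/(\rho r^{**})+\sqrt{L^2\ln\rho/(\rho r^{**})}\big)$, which tends to $0$ by (\ref{ass:rhobeta1}); the $[0,\varepsilon_L]$ contribution is at most $CL\varepsilon_L=C\sqrt{L^2/(\rho r^{**})}\to0$. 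Statement (\ref{eq:supDell}) is the same argument but lighter: the first inequality of Proposition~\ref{prop:boundDSwithL2} reduces it to the $\#A=2$ part of the left-hand side of (\ref{eq:sumrA}), hence to $C\big((\rho r^{**})^{-1/2}+L/(\rho r^{**})+\sqrt{\ln\rho/(\rho r^{**})}\big)\to0$.

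The delicate part is the single-locus statement (\ref{eq:supDeltaSell0}) and the single-pair statement (\ref{eq:Dell1ell2}), where the common cutoff is too crude: bounding $e^{-\rho r^A\varepsilon_L}$ by $1/(\rho r^A\varepsilon_L)$ as in Lemma~\ref{lem:domsumA} leaves, for a single focal locus, a term of order $\sqrt{\rho r^{**}}/(\rho r_\ell^{*})$, which need not be small when $r_\ell^{*}\ll r^{**}$. I would instead run the generic bound with a \emph{subset-adapted} cutoff $\varepsilon_B=\min(T,(\rho r^B)^{-1/2})$ for each $B=\{\ell\}\cup A'$, $\#A'\le2$; then $\varepsilon_B+e^{-\rho r^B\varepsilon_B}=\varepsilon_B+e^{-\sqrt{\rho r^B}}\le C/\sqrt{\rho r^B}$, so the generic bound improves to $C\big(L/(\rho r^B)+(1+\sqrt{\ln(\rho r^B)})/\sqrt{\rho r^B}\big)$ with no $1/(\rho\varepsilon)$ term. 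Summing over $B$ against the weights of Proposition~\ref{prop:boundDSwithL2}, using $\sum_{A'}L^{-\#A'}/r^B\le C(1/r_\ell^{*}+1/r^{**})$ (the internal step of the proof of Lemma~\ref{lem:domsumA}) and Jensen, one obtains $L\,\EE[\int_0^T|S^\ell(\mathbf{X}_t)-S^\ell(\pi(\mathbf{X}_t))|\,dt]\le C\big(L^2/(\rho r_\ell^{*})+L^2/(\rho r^{**})+\sqrt{L^2\ln\rho/(\rho r_\ell^{*})}+\sqrt{L^2\ln\rho/(\rho r^{**})}\big)\to0$ by (\ref{ass:rhorell}) and (\ref{ass:rhobeta1}). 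Statement (\ref{eq:Dell1ell2}) is the one-term case: $|D^{\ell_1,\ell_2}(\mathbf{x})|\le\|\mathbf{x}^{\{\ell_1,\ell_2\}}-\pi(\mathbf{x}^{\{\ell_1,\ell_2\}})\|_2$, and the generic bound with $\varepsilon=(\rho r^{\{\ell_1,\ell_2\}})^{-1/2}$ gives $\EE[\int_0^T|D^{\ell_1,\ell_2}(\mathbf{X}_t)|\,dt]\le C\big(L/(\rho r^{\{\ell_1,\ell_2\}})+(1+\sqrt{\ln(\rho r^{\{\ell_1,\ell_2\}})})/\sqrt{\rho r^{\{\ell_1,\ell_2\}}}\big)\to0$ under $\rho r^{\{\ell_1,\ell_2\}}\gg L$.

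In summary, the only genuine obstacle is this cutoff bookkeeping: one has to identify, in each of the four conclusions, which of $r^{**}$, $r_\ell^{*}$, $r^{\{\ell_1,\ell_2\}}$ drives the relaxation to linkage equilibrium and tailor $\varepsilon$ accordingly, since the wasteful $e^{-x}\le1/x$ estimate is harmless only after averaging over loci and must be replaced by honest exponential decay in the single-locus and single-pair cases. Everything else is the routine verification that each of the finitely many error terms vanishes under (\ref{ass:rhobeta1}), (\ref{ass:rhorell}) and $\rho r^{\{\ell_1,\ell_2\}}\gg L$.
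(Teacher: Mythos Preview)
Your proposal is correct and follows the paper's strategy exactly: combine Proposition~\ref{prop:boundDSwithL2}, Proposition~\ref{prop:controllink}, and the summation estimates of Lemma~\ref{lem:domsumA}, splitting each time integral at a cutoff tailored to the relevant recombination rate. The only cosmetic difference is that for (\ref{eq:supDeltaSell0}) you use a subset-by-subset cutoff $\varepsilon_B=(\rho r^B)^{-1/2}$, whereas the paper uses a single locus-level cutoff $\varepsilon_\ell=(\rho r_\ell^*)^{-1/2}\vee\varepsilon_L$ and then appeals to (\ref{eq:sumrell}); both choices kill the troublesome $1/(\rho\varepsilon)$ contribution and yield the same final bound.
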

We use the following computational Lemma.
\begin{lem}\label{lem:domsumA}
We can find a constant $C$ independent of $(L,\varepsilon)$ such that for any $\ell_0\in [L]$
\begin{multline}
\sum\limits_{\substack{A\subseteq [L]\smallsetminus \{\ell_0\}\\ 1\leq \#A\leq 2}}\frac{1}{L^{\#A}}\left(
\frac{1}{\rho r_{\{\ell_0\}\cup A}\varepsilon} 
+\frac{L}{\rho r_{\{\ell_0\}\cup A}} + \sqrt{\frac{\ln(\rho)}{\rho r_{\{\ell_0\}\cup A}}}
\right)\\
\leq
C\left(\left(\frac{1}{\rho \varepsilon} + \frac{L}{\rho}\right)
\left(\frac{1}{r_{\ell_0}^*} + \frac{1}{r^{**}}\right)
+ \sqrt{\frac{\ln(\rho)}{\rho r_{\ell_0}^*}} + \sqrt{\frac{\ln(\rho)}{\rho r^{**}}}\right)
\label{eq:sumrell}
\end{multline}
Furthermore,
\begin{equation}
\sum\limits_{\substack{A\subseteq [L]\\ 2\leq \#A\leq 3}}
\frac{1}{L^{\#A}}\left(
\frac{1}{\rho r_{A}\varepsilon} 
+\frac{L}{\rho r_{A}} + \sqrt{\frac{\ln(\rho)}{\rho r_{A}}}
\right)
\leq
C\left(\frac{1}{\rho r^{**} \varepsilon} + \frac{L}{\rho r^{**}}
 + \sqrt{\frac{\ln(\rho)}{\rho r^{**}}}\right)
\label{eq:sumrA}
\end{equation}
\end{lem}
\begin{proof}
In the follwing, we write $a_{\ell_0}$ for the left-hand side of (\ref{eq:sumrell}). Then 
\begin{align*}
a_{\ell_0}
=&
\left(\frac{1}{\rho \varepsilon} + \frac{L}{\rho}\right)
\sum\limits_{\substack{
        A\subseteq [L]\smallsetminus\{\ell_0\}\\ 1\leq \#A\leq 2}}
 \frac{1}{L^{\#A}r_{\{\ell_0\}\cup A}}
+ 
\sqrt{\frac{\ln(\rho)}{\rho}}
\sum\limits_{\substack{
        A\subseteq [L]\smallsetminus\{\ell_0\}\\ 1\leq \#A\leq 2}}
\frac{1}{L^{\#A}\sqrt{r_{\{\ell_0\}\cup A}}}
\end{align*}
From Jensen's inequality, 
\begin{align*}
\sum\limits_{\substack{
        A\subseteq [L]\smallsetminus\{\ell_0\}\\ 1\leq \#A\leq 2}}\frac{1}{K L^{\#A}}
\times \frac{1}{\sqrt{ r_{\{\ell_0\}\cup A}}}
\leq& 
\left(
\sum\limits_{\substack{
        A\subseteq [L]\smallsetminus\{\ell_0\}\\ 1\leq \#A\leq 2}}\frac{1}{K L^{\#A}}
\times \frac{1}{ r_{\{\ell_0\}\cup A}}\right)^{\frac{1}{2}}
\end{align*}
where $K$ is a normalization constant
$$K := 
\sum\limits_{\substack{A\subseteq [L]\smallsetminus\{\ell_0\}\\ 1\leq \#A\leq 2}}
\frac{1}{L^{\#A}}$$
Because $K$ is of order 1, we can absorb it into a universal constant $C$ and we get
\begin{equation}\label{eq:tmpsupdeltaS}
a_{\ell_0}
 \leq 
C\left(\left(\frac{1}{\rho \varepsilon} + \frac{L}{\rho}\right)H^{\ell_0}  + \sqrt{\ln(\rho)\frac{H^{\ell_0}}{\rho}}\right)
\end{equation}
where
$$
H^{\ell_0} :=
\sum\limits_{\substack{
        A\subseteq [L]\smallsetminus\{\ell_0\}\\ 1\leq \#A\leq 2}}\frac{1}{L^{\#A}}
\times \frac{1}{r_{\{\ell_0\}\cup A}}
$$
We write
\begin{align*}
H^{\ell_0}
=& \frac{1}{L} \sum\limits_{\ell_1\in [L]\smallsetminus \{\ell_0\}} 
 \frac{1}{ r_{\{\ell_0,\ell_1\}}}
\;+\;  \frac{1}{L^2}\sum\limits_{\substack{\ell_1,\ell_2\in [L]\smallsetminus \{\ell_0\}\\\ell_1\neq \ell_2}}
 \frac{1}{ r_{\{\ell_0,\ell_1,\ell_2\}}}\\
\leq& \frac{1}{ r_{\ell_0}^*}
\;+\;  \frac{1}{L^2}\sum\limits_{\substack{\ell_1,\ell_2\in [L]\smallsetminus \{\ell_0\}\\\ell_1\neq \ell_2}}
\frac{1}{ r_{\{\ell_0,\ell_1,\ell_2\}}}
\end{align*} 
from the definition of $r_{\ell_0}^*$ in (\ref{eq:defrell}) .

Since $r_A = \min\limits_{\ell_1\neq \ell_2 \in A} r_{\{\ell_1,\ell_2\}}$
$$
\frac{1}{r_{\{\ell_0,\ell_1,\ell_2\}}}\leq
\frac{1}{r_{\{\ell_0,\ell_1\}}}+
\frac{1}{r_{\{\ell_0,\ell_2\}}}+
\frac{1}{r_{\{\ell_1,\ell_2\}}}
$$
It follows
$$    \frac{1}{L^2}\sum\limits_{\substack{\ell_1,\ell_2\in [L]\smallsetminus \{\ell_0\}\\\ell_1\neq \ell_2}}
\frac{1}{r_{\{\ell_0,\ell_1,\ell_2\}}}\\
\leq
\frac{1}{L^2}\sum\limits_{\substack{\ell_1,\ell_2\in [L]\smallsetminus \{\ell_0\}\\\ell_1\neq \ell_2}}
\frac{1}{r_{\{\ell_1,\ell_2\}}}
+
\frac{1}{L^2}\sum\limits_{\substack{\ell_1,\ell_2\in [L]\smallsetminus \{\ell_0\}\\\ell_1\neq \ell_2}}
 \frac{1}{r_{\{\ell_0,\ell_1\}}}
+ \frac{1}{r_{\{\ell_0,\ell_2\}}}
$$
The first sum is smaller than $\frac{1}{r^{**}}$, the second one is smaller than $\frac{2}{r_{\ell_0}^*}$.
Putting it all together we find a constant $C$ independent of $(\varepsilon,L,\ell_0)$ such that
$$
H^{\ell_0} \leq C\left(\frac{1}{r_{\ell_0}^*} + \frac{1}{r^{**}}\right)
$$
We thus obtain that for any $\ell_0\in [L]$
$$
a_{\ell_0} \leq
C\left(\left(\frac{1}{\rho \varepsilon} + \frac{L}{\rho}\right)
\left(\frac{1}{r_{\ell_0}^*} + \frac{1}{r^{**}}\right)
+ \sqrt{\frac{\ln(\rho)}{\rho}}
\left(\frac{1}{r_{\ell_0}^*} + \frac{1}{r^{**}}\right)^{\frac{1}{2}}\right)
$$
for $C$ independent of $(\varepsilon,L,\ell_0)$. To get (\ref{eq:sumrell}), use that $\sqrt{x+y}\leq \sqrt{x}+\sqrt{y}$ for any $x,y\geq 0$. We turn to (\ref{eq:sumrA}). Write $a^*$ for the left-hand side of (\ref{eq:sumrA}). We have
\begin{align*}
a^* =& \frac{1}{L}\sum\limits_{\ell\in[L]} a_\ell\\
\leq& C\left(\left(\frac{1}{\rho \varepsilon} + \frac{L}{\rho}\right)
\left(\frac{1}{L}\sum\limits_{\ell\in[L]} \frac{1}{r_\ell^*}
+\frac{1}{r^{**}}\right)
+ \sqrt{\frac{\ln(\rho)}{\rho r^{**}}} 
+ \sqrt{\frac{\ln(\rho)}{\rho}}\frac{1}{L}\sum\limits_{\ell\in[L]}\frac{1}{\sqrt{r_\ell^*}}
\right)\\
\leq& C\left(\left(\frac{1}{\rho \varepsilon} + \frac{L}{\rho}\right)
\frac{2}{r^{**}}
+ \sqrt{\frac{\ln(\rho)}{\rho r^{**}}} 
+ \sqrt{\frac{\ln(\rho)}{\rho}}\frac{1}{L}\sum\limits_{\ell\in[L]}\frac{1}{\sqrt{r_\ell^*}}
\right)
\end{align*}
We conclude with Jensen's inequality
$$
\frac{1}{L}\sum\limits_{\ell\in[L]}\frac{1}{\sqrt{r_\ell^*}}
\leq \left(\frac{1}{L}\sum\limits_{\ell\in[L]}\frac{1}{r_\ell^*}\right)^{\frac{1}{2}} = \frac{1}{\sqrt{r^{**}}}
$$
\end{proof}

\begin{proof}[Proof of Proposition \ref{prop:supDeltaS}]
First, let us notice that (\ref{ass:rhobeta1}) implies
$$
\rho \gg \frac{L^{2}}{r^{**}} = \frac{L^{2}}{L(L-1)}\sum\limits_{\substack{\ell_1,\ell_2\in [L]\\\ell_1\neq\ell_2}} \frac{1}{r_{\{\ell_1,\ell_2\}}}
\geq \max_{\substack{\ell_1,\ell_2\in [L]\\\ell_1\neq\ell_2}} \frac{1}{r_{\{\ell_1,\ell_2\}}}
= \frac{1}{r_{[L]}}
$$
In particular, for any $A\subseteq [L]$ we have $\rho r_A \geq \rho r_{[L]} \gg 1$ so that we may apply Proposition \ref{prop:controllink} and Lemma \ref{lem:domsumA}. Second, we note that by (\ref{ass:rhobeta1}), $\varepsilon_L$ satisfies
\begin{equation}\label{eq:epsLsmall}
\frac{1}{\rho r^{**}} \ll \varepsilon_L    \ll \frac{1}{L}
\end{equation}

Let us start with (\ref{eq:supDeltaS}). We have
\begin{align*}
\sum\limits_{\ell\in[L]}
\EE\left[\int_0^T |S^{\ell}(\mathbf{X}_t)- S^{\ell}(\pi(\mathbf{X}_t))|\dd t\right]
=& \mathcal{O}(L\varepsilon_L) + \EE\left[\sum\limits_{\ell\in[L]}\int_{\varepsilon_L}^T |S^{\ell}(\mathbf{X}_t)- S^{\ell}(\pi(\mathbf{X}_t))|\dd t\right]\\
\leq& o(1) +  T\sum\limits_{\ell\in[L]} \EE\left[\sup\limits_{t\in[\varepsilon_L,T]}
 \left|S^{\ell}(\mathbf{X}_t)- S^{\ell}(\pi(\mathbf{X}_t))\right|\right]
\end{align*}
from (\ref{eq:epsLsmall}).
Using (\ref{eq:boundDSwithL2v2}) from Proposition \ref{prop:boundDSwithL2} we get
$$
\sum\limits_{\ell\in[L]} \EE\left[\sup\limits_{t\in[\varepsilon_L,T]}
 \left|S^{\ell}(\mathbf{X}_t)- S^{\ell}(\pi(\mathbf{X}_t))\right|\right]
 \leq CL\sum\limits_{\substack{A\subseteq [L]\\ 2\leq \#A\leq 3}} \frac{1}{L^{\#A}}
 \EE\left[\sup\limits_{t\in [\varepsilon_L,T]}||\mathbf{X}_t^A - \pi(\mathbf{X}_t^A)||_2\right]
$$
Then, Proposition \ref{prop:controllink} and (\ref{eq:sumrA}) from Lemma \ref{lem:domsumA} yield
$$
\sum\limits_{\ell\in[L]}
\EE\left[\sup\limits_{t\in[\varepsilon_L,T]}
 \left|S^{\ell}(\mathbf{X}_t)- S^{\ell}(\pi(\mathbf{X}_t))\right|\right]
\leq CL\left(\frac{1}{\rho r^{**} \varepsilon_L} + \frac{L}{\rho r^{**}}
 + \sqrt{\frac{\ln(\rho)}{\rho r^{**}}}\right)
$$
By (\ref{ass:rhobeta1}) and (\ref{eq:epsLsmall}), we obtain that the right-hand side is $o(1)$. We obtain (\ref{eq:supDell}) the same way, using (\ref{eq:boundDSwithL2v1}) from Proposition \ref{prop:boundDSwithL2}. 

We now consider a sequence $\ell^L\equiv \ell$ satisfying (\ref{ass:rhorell}). We define
$$
\varepsilon_{\ell,L}\equiv \varepsilon_\ell := \frac{1}{\sqrt{\rho r_\ell^*}} \vee \varepsilon_L
$$
Just as in (\ref{eq:epsLsmall}) we have by (\ref{ass:rhorell})
$$
\frac{1}{\rho r_\ell^* }\ll \varepsilon_\ell \ll \frac{1}{L}
$$
It follows as before
\begin{align*}
L\EE\left[\int_0^T |S^{\ell}(\mathbf{X}_t)- S^{\ell}(\pi(\mathbf{X}_t))|\dd t\right]
\leq& \mathcal{O}(L\varepsilon_\ell)
+ TL\EE\left[\sup\limits_{t\in[\varepsilon_\ell,T]}
\left|S^{\ell}(\mathbf{X}_t)- S^{\ell}(\pi(\mathbf{X}_t))\right|\right] \\
\leq& o(1)
+ TL\EE\left[\sup\limits_{t\in[\varepsilon_\ell,T]}
\left|S^{\ell}(\mathbf{X}_t)- S^{\ell}(\pi(\mathbf{X}_t))\right|\right] \\
\end{align*}
Using Proposition \ref{prop:controllink} and (\ref{eq:sumrell}) from Lemma \ref{lem:domsumA} we get
\begin{align*}
TL\EE\left[\sup\limits_{t\in[\varepsilon_\ell,T]}
\left|S^{\ell}(\mathbf{X}_t)- S^{\ell}(\pi(\mathbf{X}_t))\right|\right] 
\leq& CL\left(\left(\frac{1}{\rho \varepsilon_L} + \frac{L}{\rho}\right)
\left(\frac{1}{r_{\ell}^*} + \frac{1}{r^{**}}\right)
+ \sqrt{\frac{\ln(\rho)}{\rho r_{\ell}^*}} + \sqrt{\frac{\ln(\rho)}{\rho r^{**}}}\right)\\
\leq& C\left(\frac{L}{\rho r_{\ell}^* \varepsilon_L} + \frac{L}{\rho r_\ell^*} + 
\frac{L}{\rho r^{**} \varepsilon_L} + \frac{L}{\rho r^{**}} 
+ \sqrt{\frac{\ln(\rho) L^2}{\rho r_{\ell}^*}} + \sqrt{\frac{\ln(\rho) L^2}{\rho r^{**}}}\right)
\end{align*}
for some constant $C$ independent of $(L,\ell)$. The right-hand side is small from (\ref{ass:rhobeta1}),(\ref{ass:rhorell}) and the definition of $\varepsilon_\ell$.

Finally, to prove (\ref{eq:Dell1ell2}) we define $\varepsilon_{\ell_1,\ell_2,L}\equiv \varepsilon_{\ell_1,\ell_2} := \frac{1}{\sqrt{\rho r_{\{\ell_1,\ell_2\}}}}\vee\frac{1}{\sqrt{\rho r^{**}}}$. We have
$$
\int_0^t \left|D^{\ell_1,\ell_2}(\mathbf{X}_u)\right|\dd u
\leq \varepsilon_{\ell_1,\ell_2} + t\sup\limits_{u\in [\varepsilon_{\ell_1,\ell_2},T]} \left|D^{\ell_1,\ell_2}(\mathbf{X}_t)\right|
$$
The first term on the right-hand side is $o(1)$ from Assumption (\ref{eq:rhorell1ell2}). For the second term, (\ref{eq:boundDSwithL2v1}) from Proposition \ref{prop:boundDSwithL2} and Proposition \ref{prop:controllink} tell us
$$
\EE\left[\sup\limits_{u\in [\varepsilon_{\ell_1,\ell_2},T]} |D^{\ell_1,\ell_2}(\mathbf{X}_t)|\right]
\leq C\left( \frac{1}{\rho r_{\{\ell_1,\ell_2\}}\varepsilon_{\ell_1,\ell_2}}
+\frac{L}{\rho r_{\{\ell_1,\ell_2\}}} + \sqrt{\frac{\ln(\rho)}{\rho r_{\{\ell_1,\ell_2\}}}}\right)
$$
The result follows from Assumption (\ref{eq:rhorell1ell2}) and the definition of $\varepsilon_{\ell_1,\ell_2}$.
\end{proof}

\subsection{Proof of Theorem \ref{thm:strongcvgrho} \label{sec:cclstrongrec}}
In this section, we make the following assumptions of Theorem \ref{thm:strongcvgrho}:
\begin{itemize}
    \item $\mu_{\mathbf{X}_0}$ converges in law to a deterministic measure $m_0$.
    \item We have $\rho r^{**} \gg L^{2}\ln(\rho)\qquad$ (\ref{ass:rhobeta1})
\end{itemize}
The proof is broken down into several lemmas. 
Let us define for $\mathbf{y}\in\XX^{[L]}$ and $\varphi\in\mathcal{C}^2([0,1])$
\begin{equation}\label{dfn:Gell}
    G^\ell_{\mathbf{y}}\varphi(x) := x(1-x)LS^\ell(\mathbf{y})\varphi'(x) + \overline{\Theta}(x)\varphi'(x) + \frac{1}{2}x(1-x)\varphi''(x)
\end{equation}
$G^\ell_{\mathbf{X}_t}$ can be thought of as the generator for the $\ell-$th locus.
And the limit generator is
\begin{equation}\label{dfn:barG}
\forall x\in [0,1],\xi\in \mathcal{P}([0,1]),\qquad 
\overline{G}_{\xi}\varphi(x) := \left(\overline{s}(\xi)x(1-x)+\overline{\Theta}(x)\right)\varphi'(x) + \frac{1}{2}x(1-x)\varphi''(x)
\end{equation}

\begin{lem}\label{lem:GGbarto0}
Consider a sequence $\ell^L\equiv \ell \in [L]$ satisfying (\ref{ass:rhorell}), that is
$$
    \rho r_{\ell}^* \gg L^2\ln(\rho)
$$
We have for $\varphi \in \mathcal{C}^2([0,1])$
\begin{equation}
\EE\left[\int_{0}^{T} \dd u\;\left| G^{\ell}_{\mathbf{X}_u}\varphi(p^{\ell}(\mathbf{X}_u)) - \overline{G}_{\mu_{\mathbf{X}_u}}\varphi(p^{\ell}(\mathbf{X}_u))\right|\right] \longrightarrow 0
\label{eq:GbarGelltozero}
\end{equation}

Furthermore, for any $\varphi\in \mathcal{C}^2([0,1])$ we have
\begin{equation}
\EE\left[\frac{1}{L}\sum\limits_{\ell\in [L]}
\int_{0}^T \dd u\;\left| G^\ell_{\mathbf{X}_u}\varphi(p^\ell(\mathbf{X}_u)) - \overline{G}_{\mu_{\mathbf{X}_u}}\varphi(p^\ell(\mathbf{X}_u))\right|\right] \longrightarrow 0
\label{eq:GbarGmeantozero}
\end{equation}
\end{lem}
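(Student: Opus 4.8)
The plan is to deduce both displays from the work already done in Section~\ref{sec:controlL2itself}, specifically Proposition~\ref{prop:supDeltaS}, by observing that the only discrepancy between the generator $G^\ell_{\mathbf{X}_u}$ and the limit generator $\overline{G}_{\mu_{\mathbf{X}_u}}$ lies in the drift coefficient, i.e. in the difference between $LS^\ell(\mathbf{X}_u)$ and $\overline{s}(\mu_{\mathbf{X}_u})\,p^\ell(\mathbf{X}_u)(1-p^\ell(\mathbf{X}_u))$. Writing out (\ref{dfn:Gell}) and (\ref{dfn:barG}), for any $\varphi\in\mathcal{C}^2([0,1])$ one has
$$
\left|G^\ell_{\mathbf{X}_u}\varphi(p^\ell(\mathbf{X}_u)) - \overline{G}_{\mu_{\mathbf{X}_u}}\varphi(p^\ell(\mathbf{X}_u))\right|
\;=\; \left|LS^\ell(\mathbf{X}_u) - \overline{s}(\mu_{\mathbf{X}_u})\,p^\ell(\mathbf{X}_u)(1-p^\ell(\mathbf{X}_u))\right|\cdot\left|\varphi'(p^\ell(\mathbf{X}_u))\right|,
$$
so that everything reduces to an $L^1$ bound on the drift discrepancy, uniformly in $\ell$, with the harmless factor $\|\varphi'\|_\infty$ pulled out.

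\textbf{Step 1: split the drift discrepancy via the LE projection.} I would insert $LS^\ell(\pi(\mathbf{X}_u))$ as an intermediate term and use the triangle inequality:
$$
\left|LS^\ell(\mathbf{X}_u) - \overline{s}(\mu_{\mathbf{X}_u})\,p^\ell(\mathbf{X}_u)(1-p^\ell(\mathbf{X}_u))\right|
\;\leq\;
L\left|S^\ell(\mathbf{X}_u) - S^\ell(\pi(\mathbf{X}_u))\right|
\;+\;
\left|LS^\ell(\pi(\mathbf{X}_u)) - \overline{s}(\mu_{\mathbf{X}_u})\,p^\ell(\mathbf{X}_u)(1-p^\ell(\mathbf{X}_u))\right|.
$$
The second term on the right is $\mathcal{O}(1/L)$ uniformly in $\mathbf{x}$ and in $\ell$ by Lemma~\ref{lem:pibarom} (recalling $p^\ell(\pi(\mathbf{x}))=p^\ell(\mathbf{x})$), so after integrating over $[0,T]$ and taking expectations it contributes $\mathcal{O}(1/L)\to 0$ — and for the first display (\ref{eq:GbarGelltozero}) it contributes $\mathcal{O}(L\cdot 1/L)=\mathcal{O}(1)$, which is \emph{not} small. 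So this naive split is too lossy for (\ref{eq:GbarGelltozero}); instead I would keep the $\mathcal{O}(1/L)$ error from Lemma~\ref{lem:pibarom} inside and note that for the first display we only need $\int_0^T$ of it \emph{without} the extra factor $L$, hence $\mathcal{O}(1/L)\to 0$ is fine there, while the extra factor $L$ only appears in the second display (\ref{eq:GbarGmeantozero}) after the $\frac1L\sum_\ell$, where it cancels. Concretely: for (\ref{eq:GbarGelltozero}) bound by $\|\varphi'\|_\infty\left(L\,\EE\!\left[\int_0^T|S^\ell(\mathbf{X}_u)-S^\ell(\pi(\mathbf{X}_u))|\,du\right] + T\,\mathcal{O}(1/L)\right)$; for (\ref{eq:GbarGmeantozero}) bound by $\|\varphi'\|_\infty\left(\frac1L\sum_\ell L\,\EE\!\left[\int_0^T|S^\ell(\mathbf{X}_u)-S^\ell(\pi(\mathbf{X}_u))|\,du\right] + T\,\mathcal{O}(1/L)\right) = \|\varphi'\|_\infty\left(\sum_\ell\EE\!\left[\int_0^T|S^\ell-S^\ell\circ\pi|\right] + T\,\mathcal{O}(1/L)\right)$.

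\textbf{Step 2: invoke Proposition~\ref{prop:supDeltaS}.} For (\ref{eq:GbarGmeantozero}), the sum $\sum_\ell\EE\!\left[\int_0^T|S^\ell(\mathbf{X}_u)-S^\ell(\pi(\mathbf{X}_u))|\,du\right]\to 0$ is exactly (\ref{eq:supDeltaS}) under assumption (\ref{ass:rhobeta1}), which we have assumed; so the display follows. For (\ref{eq:GbarGelltozero}), we need $L\,\EE\!\left[\int_0^T|S^{\ell}(\mathbf{X}_u)-S^{\ell}(\pi(\mathbf{X}_u))|\,du\right]\to 0$ for the specific sequence $\ell=\ell^L$; this is precisely (\ref{eq:supDeltaSell0}) under assumption (\ref{ass:rhorell}), which is the hypothesis of the lemma. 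So both displays follow immediately once the reduction to the drift discrepancy is in place. I would also remark that $\varphi\in\mathcal{C}^2([0,1])$ guarantees $\|\varphi'\|_\infty<\infty$, and that the $\mathcal{O}(1/L)$ in Lemma~\ref{lem:pibarom} is uniform in $\mathbf{x}$, hence survives the expectation and the $\frac1L\sum_\ell$ average.

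\textbf{Main obstacle.} There is no real obstacle here — this lemma is essentially bookkeeping, a translation of the analytic estimates of Proposition~\ref{prop:supDeltaS} into the language of generators. The only point requiring a little care is keeping track of which power of $L$ accompanies the error terms: the per-locus statement (\ref{eq:GbarGelltozero}) carries one explicit factor $L$ (because selection is of order $L$ at the genome level, giving $LS^\ell$), and this is precisely why the stronger per-locus recombination assumption (\ref{ass:rhorell}) — rather than the averaged (\ref{ass:rhobeta1}) — is needed for it, whereas the averaged statement (\ref{eq:GbarGmeantozero}) gets away with (\ref{ass:rhobeta1}) because the $\frac1L\sum_\ell$ absorbs the extra $L$. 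Matching the right hypothesis to the right display is the one thing to get right.
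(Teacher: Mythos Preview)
Your proposal is correct and follows essentially the same route as the paper: reduce the generator difference to the drift discrepancy $|LS^\ell(\mathbf{X}_u)-\overline{s}(\mu_{\mathbf{X}_u})p^\ell(1-p^\ell)|\cdot|\varphi'|$, split via $LS^\ell(\pi(\mathbf{X}_u))$, control the linkage term with (\ref{eq:supDeltaSell0}) for the per-locus statement and (\ref{eq:supDeltaS}) for the averaged statement, and absorb the LE remainder using the uniform $\mathcal{O}(1/L)$ from Lemma~\ref{lem:pibarom}. The brief detour in Step~1 where you momentarily worry about an extra factor of $L$ is unnecessary---there is none in (\ref{eq:GbarGelltozero})---but you correct yourself immediately and the final bounds you write down are exactly the paper's.
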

\begin{proof}
We first consider the case where $\ell^L\equiv \ell$ is such that $\rho r_{\ell}^* \gg L^{2}\ln(\rho)$. For any $t\in [0,T]$ we have from (\ref{dfn:Gell})
\begin{align*}
\left|G^{\ell}_{\mathbf{X}_t}\varphi(p^{\ell}(\mathbf{X}_t)) - \overline{G}_{\mu_{\mathbf{X}_t}}\varphi(p^{\ell}(\mathbf{X}_t))\right|
=& \left|LS^{\ell}(\mathbf{X}_t) - \overline{s}(\mu_{\mathbf{X}_t})p^{\ell}(\mathbf{X}_t)(1-p^{\ell}(\mathbf{X}_t)) \right|\times|\varphi'(\mathbf{X}_t)| 
\end{align*}
Observe that 
\begin{multline*}
    \left|LS^{\ell}(\mathbf{X}_t) - \overline{s}(\mu_{\mathbf{X}_t})p^{\ell}(\mathbf{X}_t)(1-p^{\ell}(\mathbf{X}_t)) \right|
\leq L\left|S^{\ell}(\mathbf{X}_t) - S^{\ell}(\pi(\mathbf{X}_t))\right|\\
+\left|LS^{\ell}(\pi(\mathbf{X}_t))- \overline{s}(\mu_{\mathbf{X}_t})p^{\ell}(\mathbf{X}_t)(1-p^{\ell}(\mathbf{X}_t)) \right|
\end{multline*}
The expectation of the integral of the first term is $o(1)$ from (\ref{eq:supDeltaSell0}) in Proposition \ref{prop:supDeltaS}.
The expectation of the second term is $\mathcal{O}\left(\frac{1}{L}\right)$ from Lemma \ref{lem:pibarom}. To prove (\ref{eq:GbarGmeantozero}), 
\begin{multline*}
\left|G^{\ell}_{\mathbf{X}_t}\varphi(p^{\ell}(\mathbf{X}_t)) - \overline{G}_{\mu_{\mathbf{X}_t}}\varphi(p^{\ell}(\mathbf{X}_t))\right|
= \frac{1}{L}\sum\limits_{\ell\in[L]}\left|LS^{\ell}(\mathbf{X}_t) - \overline{s}(\mu_{\mathbf{X}_t})p^{\ell}(\mathbf{X}_t)(1-p^{\ell}(\mathbf{X}_t)) \right|\times|\varphi'(\mathbf{X}_t)| \\
\leq
\frac{||\varphi'||_\infty}{L}\sum\limits_{\ell\in[L]}L
\left|S^{\ell}(\mathbf{X}_t) - S^{\ell}(\pi(\mathbf{X}_t))\right|\\
+\frac{||\varphi'||_\infty}{L}\sum\limits_{\ell\in[L]}
\left|LS^{\ell}(\pi(\mathbf{X}_t))- \overline{s}(\mu_{\mathbf{X}_t})p^{\ell}(\mathbf{X}_t)(1-p^{\ell}(\mathbf{X}_t)) \right|
\end{multline*}
The expectation of the integral of the first term is $o(1)$ from (\ref{eq:supDeltaS}) in Proposition \ref{prop:supDeltaS}, and the second term is small from Lemma \ref{lem:pibarom}.
\end{proof}
To get the convergence of $(\mu_{\mathbf{X}_t})_{t\in[0,T]}$ we follow a classical proof, with first a tightness Lemma, then proof that we get the correct martingale problem in the limit.
\begin{lem}\label{lem:tightness}
        Consider a sequence $\ell^L\equiv \ell \in [L]$ such that (\ref{ass:rhorell}) is satisfied, that is,
        \begin{gather*}
                \rho r_{\ell}^* \gg L^{2}\ln(\rho)
        \end{gather*}
        The law of $(p^{\ell}(\mathbf{X}_t))_{t\in [0,T]}\; \in \DD([0,T],[0,1])$ is tight for the Skorokhod J1 topology. Furthermore, the allelic law process $(\mu_{\mathbf{X}_t})_{t\in [0,T]} \; \in \DD([0,T],\mathcal{P}([0,1]))$ is tight for the Skorokhod J1 topology.
\end{lem}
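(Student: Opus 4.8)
The plan is to establish tightness via the standard Aldous criterion applied to the one-dimensional semimartingales $p^\ell(\mathbf{X}_t)$, and then to lift this to tightness of the measure-valued process $(\mu_{\mathbf{X}_t})_{t\in[0,T]}$ using Jakubowski's criterion (tightness in $\DD([0,T],\mathcal{P}([0,1]))$ reduces to tightness of the real-valued processes $t\mapsto \langle \mu_{\mathbf{X}_t},\varphi\rangle$ for $\varphi$ ranging over a suitable separating class, here the monomials $\mathrm{Id}^k$). First I would recall from Corollary \ref{cor:SDEp(X)} that
$$
dp^\ell(\mathbf{X}_t) = \big(\overline{\Theta}(p^\ell(\mathbf{X}_t)) + LS^\ell(\mathbf{X}_t)\big)dt + \sqrt{p^\ell(\mathbf{X}_t)(1-p^\ell(\mathbf{X}_t))}\,d\hat B_t^\ell,
$$
so the drift decomposes as a bounded part $\overline\Theta$ plus the selection term $LS^\ell(\mathbf{X}_t)$, and the martingale part has quadratic variation bounded by $\tfrac14$. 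The only delicate point is that $LS^\ell(\mathbf{X}_t)$ is a priori only controlled in an averaged/integrated sense: by the formula $S^\ell(\mathbf{x}) = \tfrac1L\mathbf{Cov}_{\mathbf{x}}[W(g),\mathbbm{1}_{[g^\ell=+1]}]$ and $W = LU(Z)$ with $U$ of degree $\le 2$ and $Z\in[-1,1]$, one has the crude deterministic bound $|LS^\ell(\mathbf{x})|\le \|U'\|_{\infty,[-1,1]}\cdot\mathcal{O}(1)$ uniformly — selection is genuinely $\mathcal{O}(1)$ at the locus level because $W$ is a function of the bounded trait $Z$ and its marginal covariance picks up a factor $1/L$. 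So in fact the full drift of $p^\ell(\mathbf{X}_t)$ is uniformly bounded, independently of $L$.

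With a uniform bound on the drift and on the quadratic variation, Aldous's criterion is immediate: for stopping times $\tau\le\tau'\le(\tau+\delta)\wedge T$,
$$
\EE\big[|p^\ell(\mathbf{X}_{\tau'}) - p^\ell(\mathbf{X}_{\tau})|^2\big] \le 2\delta^2\|\mathrm{drift}\|_\infty^2 + 2\EE\Big[\int_\tau^{\tau'} p^\ell(\mathbf{X}_u)(1-p^\ell(\mathbf{X}_u))\,du\Big] \le C(\delta^2+\delta),
$$
and since the state space $[0,1]$ is compact the marginals are automatically tight; hence $(p^\ell(\mathbf{X}_t))_{t\in[0,T]}$ is tight in $\DD([0,T],[0,1])$. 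For the second statement, I would apply the same estimate to the processes $\langle\mu_{\mathbf{X}_t},\mathrm{Id}^k\rangle = \tfrac1L\sum_\ell p^\ell(\mathbf{X}_t)^k$: by It\^o's formula each is a semimartingale whose drift is $\tfrac1L\sum_\ell\big[k p^\ell(1-p^\ell)LS^\ell(\mathbf{X}_t)(p^\ell)^{k-1} + k\overline\Theta(p^\ell)(p^\ell)^{k-1} + \tfrac{k(k-1)}2 p^\ell(1-p^\ell)(p^\ell)^{k-2}\big]$, which is bounded uniformly in $L$ by the above, and whose martingale part has quadratic variation $\tfrac1{L^2}\sum_{\ell_1,\ell_2}k^2(p^{\ell_1}p^{\ell_2})^{k-1}\sqrt{\cdots}\,d\langle\hat B^{\ell_1},\hat B^{\ell_2}\rangle = \tfrac1{L^2}\sum_{\ell_1,\ell_2}k^2(\cdots)D^{\ell_1,\ell_2}(\mathbf{X}_t)\,dt$, which is $\mathcal{O}(1/L)$ plus a contribution controlled on average by $(1/L)\mathbf{Var}_{\mathbf{X}_t}$-type terms — in any case bounded. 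Aldous's criterion then gives tightness of each $\langle\mu_{\mathbf{X}_t},\mathrm{Id}^k\rangle$; the compact containment condition is free since $\mathcal{P}([0,1])$ is compact for the weak topology; and Jakubowski's criterion (with the separating, closed-under-addition class $\{\varphi = \text{polynomials}\}$) upgrades this to tightness of $(\mu_{\mathbf{X}_t})_{t\in[0,T]}$ in $\DD([0,T],\mathcal{P}([0,1]))$.

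The main obstacle — and the only place real care is needed — is verifying that the selection drift $LS^\ell(\mathbf{X}_t)$ is uniformly bounded in $L$ (so that Aldous's modulus-of-continuity estimate is uniform); this is where one must use crucially that $W$ depends on the genome only through the bounded additive trait $Z(\gamma)\in[-1,1]$ and that the selector carries a $1/L$ prefactor, via $S^\ell(\mathbf{x}) = \tfrac1L\mathbf{Cov}_{\mathbf{x}}[W(g),\mathbbm{1}_{[g^\ell=+1]}]$ and $|\mathbf{Cov}_{\mathbf{x}}[W(g),\mathbbm{1}_{[\cdot]}]|\le \tfrac12\|W\|_\infty\cdot 2$ together with $\|W\|_\infty = L\|U\|_{\infty,[-1,1]} = \mathcal{O}(L)$. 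Once this uniform bound is in hand the rest is entirely routine, and no use of the strong-recombination assumption (\ref{ass:rhorell}) is even required for tightness itself — that hypothesis only enters later, in the identification of the limit in Lemma \ref{lem:GGbarto0}. (The hypothesis is stated here merely to keep the lemma's scope aligned with its later use.)
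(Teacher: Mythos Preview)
Your proposal contains a genuine gap: the claim that $|LS^\ell(\mathbf{x})|\le \|U'\|_{\infty,[-1,1]}\cdot\mathcal{O}(1)$ uniformly in $L$ is false. From $S^\ell(\mathbf{x})=\tfrac1L\mathbf{Cov}_{\mathbf{x}}[W(g),\mathbbm{1}_{[g^\ell=+1]}]$ and $W=LU(Z)$ one gets $LS^\ell(\mathbf{x})=L\,\mathbf{Cov}_{\mathbf{x}}[U(Z(g)),\mathbbm{1}_{[g^\ell=+1]}]$, and the crude bound on the covariance is $\mathcal{O}(1)$, not $\mathcal{O}(1/L)$. Concretely, take $U(z)=z$ and $\mathbf{x}$ putting mass $\tfrac12$ on each of the two constant genotypes $(+1,\ldots,+1)$ and $(-1,\ldots,-1)$: then $LS^\ell(\mathbf{x})=\mathbf{Cov}_{\mathbf{x}}\big[\sum_{\ell'}g^{\ell'},\mathbbm{1}_{[g^\ell=+1]}\big]=L/2$. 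Your own last paragraph in fact produces the bound $|\mathbf{Cov}_{\mathbf{x}}[W,\mathbbm{1}]|\le\|W\|_\infty=\mathcal{O}(L)$, which gives $LS^\ell=\mathcal{O}(L)$, not $\mathcal{O}(1)$. The intuition that ``the indicator only sees one locus so the covariance picks up a $1/L$'' is exactly the linkage-equilibrium intuition, and it fails away from the LE manifold.

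This is precisely why the paper cannot bypass the strong recombination hypothesis for tightness, contrary to your parenthetical remark. The paper's proof (via the Rebolledo criterion, which is the same machinery as Aldous) handles the finite-variation part by writing
\[
\int_{t_1}^{t_2}\big|G^\ell_{\mathbf{X}_u}\mathrm{Id}(p^\ell(\mathbf{X}_u))\big|\,du
\le
\int_{t_1}^{t_2}\big|\overline G_{\mu_{\mathbf{X}_u}}\mathrm{Id}(p^\ell(\mathbf{X}_u))\big|\,du
+
\int_0^T\big|G^\ell_{\mathbf{X}_u}\mathrm{Id}-\overline G_{\mu_{\mathbf{X}_u}}\mathrm{Id}\big|(p^\ell(\mathbf{X}_u))\,du.
\]
The first integrand is genuinely $\mathcal{O}(1)$ because $\overline s$ is bounded; the second integral is independent of $(t_1,t_2)$ and goes to $0$ in probability by Lemma~\ref{lem:GGbarto0}, equation (\ref{eq:GbarGelltozero}) --- and that lemma uses (\ref{ass:rhorell}) through Proposition~\ref{prop:supDeltaS} to control $L|S^\ell(\mathbf{X}_t)-S^\ell(\pi(\mathbf{X}_t))|$. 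For $\mu_{\mathbf{X}_t}$ the same decomposition is used with (\ref{eq:GbarGmeantozero}) in place of (\ref{eq:GbarGelltozero}). Your overall architecture (Aldous plus Jakubowski/Roelly) is fine; the missing step is exactly this replacement of $G^\ell$ by the bounded $\overline G$, which is what the recombination hypotheses are for.
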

\begin{proof}
        To prove tightness of $p^\ell(X_t)_{t\in[0,T]}$ in $\DD([0,T],[0,1])$ for the Skorokhod J1 topology, we use the classical Rebolledo criterion (see Theorem C.4 in \cite{chaintron2021propagation2}). We first separate martingale and finite variation part from the SDE (\ref{eq:SDEp(X)}).
    $$
    p^{\ell}(\mathbf{X}_{t}) - p^{\ell}(\mathbf{X}_0) = \int_{0}^{t} G_{\mathbf{X}_u}^{\ell}\mbox{Id}(p^{\ell}(\mathbf{X}_u)))\dd u +
    \int_{0}^{t} \sqrt{p^{\ell}(\mathbf{X}_u)(1-p^{\ell}(\mathbf{X}_u))}
    \dd\hat{B}_u^{\ell} =: V^{\ell}_{t} + M^{\ell}_{t}
    $$
    The Rebolledo criterion has three conditions:
    \begin{enumerate}
        \item $p^{\ell}(\mathbf{X}_0)$ is tight.
        \item $V^{\ell}_t$ and $M^{\ell}_t$ are tight for all $t\in [0,T]$.
        \item For $A=V^{\ell}$ or $A=\braket{M^{\ell}}^{QV}$ we have
$$\forall \varepsilon >0,\qquad 
    \lim\limits_{\delta \downarrow 0} \limsup\limits_{L\to +\infty}\sup\limits_{(t_1,t_2)\in\mathcal{S}_\delta}
    \PP\left[ |A_{t_2}-A_{t_1}| >\varepsilon \right] 
    = 0$$
where $\mathcal{S}_\delta$ is the set of pairs of stopping times $(t_1,t_2)$ for the natural filtration of the process $(\mathscr{F}_t)_{t\in [0,T]}$, such that $|t_1-t_2|<\delta$ a.s.
    \end{enumerate}
    Since $p^{\ell}(\mathbf{X}_0)$ lives in $[0,1]$, the first condition is trivially satisfied.
    We turn  to the second condition. Since $\braket{M^{\ell}}_t^{QV}\leq t$ from Corollary \ref{cor:SDEp(X)}, $M^{\ell}_t$ is tight. Furthermore, 
 since $V^{\ell}_t = p^{\ell}(\mathbf{X}_t) - p^{\ell}(\mathbf{X}_0) - M^{\ell}_t$, it is necessarily tight. 
The third condition for $A=\braket{M^{\ell}}^{QV}$ is trivially satisfied for $\braket{M^{\ell}}^{QV}$ since for $t\in [0,T]$ we have $\frac{\dd}{\dd t}\braket{M^\ell}_t \leq 1$. 
For $A=V^{\ell}$, we write
$$
\int_{t_1}^{t_2} |G_{\mathbf{X}_u}^{\ell}\mbox{Id}(p^{\ell}(\mathbf{X}_u)))|\dd u
\leq
\int_{t_1}^{t_2} |\overline{G}_{\mu_{\mathbf{X}_u}}\mbox{Id}(p^{\ell}(\mathbf{X}_u)))|\dd u
+
\int_{0}^T \left|G_{\mathbf{X}_u}^{\ell}\mbox{Id}(p^{\ell}(\mathbf{X}_u)))
- \overline{G}_{\mu_{\mathbf{X}_u}} \mbox{Id}(p^{\ell}(\mathbf{X}_u)))\right|\dd u
$$
The first term can be bounded by $C|t_1-t_2|$ for some deterministic constant $C$, the second term goes to 0 in probability from (\ref{eq:GbarGelltozero}) in Lemma \ref{lem:GGbarto0} unifrmly in $t_1,t_2$.
This yields the tightness of $(p^{\ell}(\mathbf{X}_t))_{t\in[0,T]}$.

To get the tightness of $(\mu_{\mathbf{X}_t})_{t\in [0,T]}\in \DD([0,T],\mathcal{P}([0,1]))$, following Theorem 2.1 of \cite{roelly1986criterion}, we only need to show that for any $\varphi\in \mathcal{C}^2([0,1])$, the process $(<\mu_{\mathbf{X}_t},\varphi>)_{t\in [0,T]}$ is tight. We may again apply the Rebolledo criterion, writing
$$<\mu_{\mathbf{X}_t},\varphi>
- <\mu_{\mathbf{X}_0},\varphi>\quad
= \frac{1}{L} \sum\limits_{\ell\in [L]} \varphi(p^\ell(\mathbf{X}_t)) - \varphi(p^\ell(\mathbf{X}_0))
= \frac{1}{L} \sum\limits_{\ell\in [L]}  V^{\varphi,\ell}_t + M^{\varphi,\ell}_t$$
with
$$
V_t^{\varphi,\ell} := \int_0^t G_{\mathbf{X}_u}^\ell \varphi (p^\ell(\mathbf{X}_u)) \dd u
\qquad ;\qquad
M_t^{\varphi,\ell} := \int_{0}^{t}\varphi'(p^\ell(\mathbf{X}_u)) \sqrt{p^{\ell}(\mathbf{X}_u)(1-p^{\ell}(\mathbf{X}_u))}
    \dd\hat{B}_u^{\ell}
$$
The condition 1 of the Rebolledo criterion is easily verified, since $<\mu_{\mathbf{X}_0},\varphi>$ is tight because bounded by $||\varphi||_\infty$. The condition 2 is obtained as above. Indeed,  on the one hand, the quadratic variation of $M_t^{\varphi,\ell}$ is uniformly bounded by $T||\varphi'||_\infty$ so $M_t^{\varphi,\ell}$ is tight for any fixed $t\in[0,T]$. On the other hand, since $p^\ell(\mathbf{X}_t)$ is in $[0,1]$, it is also tight, and therefore $V_t^{\varphi,\ell}$ is tight.

Controlling $L^{-1} \sum\limits_{\ell\in [L]}V_t^{\varphi,\ell}$ works just as above, using (\ref{eq:GbarGmeantozero}) instead of (\ref{eq:GbarGelltozero}). We only need to control the martingale $L^{-1}\sum\limits_{\ell\in [L]} M_t^{\varphi,\ell}$. We have for $t_1,t_2\in [0,T]$
$$\left<\frac{1}{L}\sum\limits_{\ell\in [L]} M^{\varphi,\ell}\right>_{t_2}^{QV}
- 
\left<\frac{1}{L}\sum\limits_{\ell\in [L]} M^{\varphi,\ell}\right>_{t_1}^{QV}
\quad = \quad \frac{1}{L^2}\sum\limits_{\ell_1, \ell_2\in [L]} \braket{M^{\varphi,\ell_1},M^{\varphi,\ell_2}}_{t_2}^{QV} - \braket{M^{\varphi,\ell_1},M^{\varphi,\ell_2}}_{t_1}^{QV}$$
Using the Kunita-Watanabe inequality (see Corollary 1.16, chapter IV in \cite{revuz2013continuous}), we have
\begin{align*}
\left|\braket{M^{\varphi,\ell_1},M^{\varphi,\ell_2}}_{t_2}^{QV} - \braket{M^{\varphi,\ell_1},M^{\varphi,\ell_2}}_{t_1}^{QV}\right|
\leq& \sqrt{\left|\braket{M^{\varphi,\ell_1}}_{t_2}^{QV}-\braket{M^{\varphi,\ell_1}}_{t_1}^{QV}\right|\times\left|\braket{M^{\varphi,\ell_2}}_{t_2}^{QV}
-\braket{M^{\varphi,\ell_2}}_{t_1}^{QV}\right|}
\end{align*}
This is uniformly bounded by $||\varphi'||_\infty^2 \;|t_2-t_1|$, which yields the result.
\end{proof}

We can now show that $(\mu_{\mathbf{X}_t})_{t\in [0,T]}$ converges to a solution of (\ref{eq:McKean-limitW}).
\begin{proof}[Proof of Theorem \ref{thm:strongcvgrho} part 1]
We will prove for any function $\varphi\in \mathcal{C}^2([0,1])$
$$
<\mu_{\mathbf{X}_t},\varphi> - <\mu_{\mathbf{X}_0},\varphi> - \int_0^t <\mu_{\mathbf{X}_u},\overline{G}_{\mu_{\mathbf{X}_u}} \varphi> \dd u \longrightarrow 0
$$
in probability. Notice that
\begin{align*}
M_t^\varphi:=&
\frac{1}{L}\sum\limits_{\ell\in [L]}
\varphi(p^\ell(\mathbf{X}_t)) - \varphi(p^\ell(\mathbf{X}_0))
- \int_0^t \dd u  G^\ell_{\mathbf{X}_u} \varphi(p^{\ell}(\mathbf{X}_u))
\\
=& <\mu_{\mathbf{X}_t},\varphi> - <\mu_{\mathbf{X}_0},\varphi>
- \int_0^t \dd u \frac{1}{L}\sum\limits_{\ell\in [L]} G^\ell_{\mathbf{X}_u} \varphi(p^{\ell}(\mathbf{X}_u))
\end{align*}
is a martingale, which by It\^o's formula and  Corollary \ref{cor:SDEp(X)} satisfies
$$
\dd M_t^\varphi := \frac{1}{L}\sum\limits_{\ell\in [L]}  \sqrt{p^\ell(\mathbf{X}_t)(1-p^\ell(\mathbf{X}_t))}\;\varphi'(p^\ell(\mathbf{X}_t))\dd\hat{B}_t^\ell.
$$
We split the equation between a martingale and non-martingale part
\begin{multline*}
\left|<\mu_{\mathbf{X}_t},\varphi> - <\mu_{\mathbf{X}_0},\varphi> - \int_0^t \dd u <\mu_{\mathbf{X}_u},\overline{G}_{\mu_{\mathbf{X}_u}} \varphi> \right| \\
= \left|M_t^\varphi\right| 
+ \int_0^t \dd u\left| <\mu_{\mathbf{X}_u},\overline{G}_{\mu_{\mathbf{X}_u}} \varphi> - \frac{1}{L}\sum\limits_{\ell\in [L]} G^\ell_{\mathbf{X}_u} \varphi(p^{\ell}(\mathbf{X}_u)) \right|
\end{multline*}
Let us show $M_t^\varphi$ goes to 0.
We have
\begin{multline*}
    \dd \braket{M^\varphi}_t^{QV} = \frac{1}{L^2}\sum\limits_{\ell\in [L]}\varphi'(p^\ell(\mathbf{X}_t))^2 p^\ell(\mathbf{X}_t)(1-p^\ell(\mathbf{X}_t))\dd t\\
+ \frac{1}{L^2}\sum\limits_{\substack{\ell_1, \ell_2\in [L]\\\ell_1\neq\ell_2}}
\varphi'(p^{\ell_1}(\mathbf{X}_t))\varphi'(p^{\ell_2}(\mathbf{X}_t))\;
\dd \braket{p^{\ell_1}(\mathbf{X}),p^{\ell_2}(\mathbf{X})}_t^{QV}\\
\leq \frac{1}{L}||\varphi'||_\infty^2
+ \frac{1}{L^2}\sum\limits_{\substack{\ell_1, \ell_2\in [L]\\\ell_1\neq\ell_2}}
||\varphi'||_\infty^2\;
 \left|D^{\ell_1,\ell_2}(\mathbf{X}_t)\right|\dd t
\end{multline*}
where we used (\ref{eq:covell1ell2}) from Corollary \ref{cor:SDEp(X)}.
It follows
$$
\braket{M^\varphi}_T^{QV} \leq \frac{1}{L}||\varphi'||_\infty^2 T
+ \frac{1}{L^2}\sum\limits_{\substack{\ell_1, \ell_2\in [L]\\\ell_1\neq\ell_2}}
||\varphi'||_\infty^2\;
\int_0^T \left|D^{\ell_1,\ell_2}(\mathbf{X}_t) \right|\dd t
$$
The first term goes to 0. The expectation of the second term is $o_{\mathbb{P}}(1)$ from (\ref{eq:supDell}) in Proposition \ref{prop:supDeltaS}. Since the quadratic variation goes to $0$, it follows that $M^\varphi$ goes to zero in the Skorokhod toplogy from the Dambis, Dubins-Schwartz theorem (Theorem 1.6, chapter V of \cite{revuz2013continuous}).

We now need to control
$$
\int_0^t \dd u \left|<\mu_{\mathbf{X}_u},\overline{G}_{\mu_{\mathbf{X}_u}} \varphi> - \frac{1}{L}\sum\limits_{\ell\in [L]} G^\ell_{\mathbf{X}_u} \varphi(p^{\ell}(\mathbf{X}_u)) \right|
\leq 
 \frac{1}{L}\sum\limits_{\ell\in [L]}
\int_0^T \dd u\left|\overline{G}_{\mu_{\mathbf{X}_u}} \varphi(p^\ell(\mathbf{X}_u)) -
G^\ell_{\mathbf{X}_u} \varphi(p^{\ell}(\mathbf{X}_u)) \right|
$$
The result follows from (\ref{eq:GbarGmeantozero}) from Lemma \ref{lem:GGbarto0}.

We thus obtain that any subsequential limit of $(\mu_{\mathbf{X}_t})_{t\in [0,T]}$ must satisfy equation (\ref{eq:McKean-limitW}), with initial law $m_0$. In particular, this yields existence of a solution to (\ref{eq:McKean-limitW}), which is unique from Proposition \ref{cor:uniqueness}. This completes the proof of the first part Theorem \ref{thm:strongcvgrho} 
\end{proof}

We conclude with the second part of the Theorem.
\begin{proof}[Proof of Theorem \ref{thm:strongcvgrho} part 2]
    First, notice that for every $i\in [n]$, Lemma \ref{lem:tightness} implies the tightness of $(p^{\ell_i^L}(\mathbf{X}))_{L\geq 1}$. Since a finite union of tight families is tight, we also get tightness for $(p^{\ell_i^L}(\mathbf{X}))_{L\geq n,i\in [n]}$. We will write $\ell_i^L\equiv \ell_i$ to alleviate notations.
    
For $\varphi\in\mathcal{C}^2([0,1]^n)$, $\mathbf{x}\in\XX^{[L]}$ and $\mathbf{y}\in[0,1]^n$ we define
$$
\mathcal{G}_{\mathbf{x}}\varphi(\mathbf{y}) := 
\sum\limits_{i\in[n]} G^{\ell_i}_{\mathbf{x}} \diamond_i 
\varphi(\mathbf{y})
+ \frac{1}{2}\sum\limits_{\substack{i,j\in[n]\\ i\neq j}}
\partial_{i,j} \varphi(\mathbf{y}) D^{\ell_i,\ell_j}(\mathbf{x})
$$
where $G^\ell_{\mathbf{x}} \diamond_i \varphi(\mathbf{y})$ means $G^\ell_{\mathbf{x}}$ applied to the function $z\mapsto \varphi(y^1,\dots,y^{i-1},z,y^{i+1},\dots,y^n)$, evaluated in $z=y^i$.
Define $\mathbf{Y}_t := (p^{\ell_i}(\mathbf{X}_t))_{i\in[n]}$. We know from It\^o's formula that
$$
M_t^\varphi := \varphi(\mathbf{Y}_t) - \varphi(\mathbf{Y}_0) - \int_0^t \dd u\;\mathcal{G}_{\mathbf{X}_u}\varphi(\mathbf{Y}_u)
$$
is a martingale for any $\varphi\in\mathcal{C}^2([0,1]^n)$. Furthermore, using $\braket{p(\mathbf{X})}_t\leq t$, we can find uniform bounds for the quadratic variation of $M_t^\varphi$. It remains to show
$$
\int_0^t \dd u\;\mathcal{G}_{\mathbf{X}_u}\varphi(\mathbf{Y}_u) - \int_0^t \dd u\sum\limits_{i\in[n]} \overline{G}_{\xi_u}\diamond_i\varphi(\mathbf{Y}_u) \longrightarrow 0
$$
where $\xi$ is the limit of $\mu_{\mathbf{X}}$. We write
\begin{multline*}
\left|\int_0^t\dd u \;\mathcal{G}_{\mathbf{X}_u}\varphi (\mathbf{Y}_u)  - \int_0^t\dd u \sum\limits_{i\in[n]} \overline{G}_{\xi_u}\diamond_i\varphi(\mathbf{Y}_u)\right|\\
\leq
\sum_{i\in[n]}
\int_0^t \dd u \left| G^{\ell_i}_{\mathbf{X}_u} \diamond_i \varphi(\mathbf{Y}_u)
-\overline{G}_{\mu_{\mathbf{X}_u}}\diamond_i\varphi(\mathbf{Y}_u)\right|
+ \sum_{i\in[n]}
\int_0^t \dd u \left|\overline{G}_{\mu_{\mathbf{X}_u}}\diamond_i\varphi(\mathbf{Y}_u)
-\overline{G}_{\xi_u}\diamond_i\varphi(\mathbf{Y}_u)\right|\\
+ \frac{1}{2}\sum\limits_{\substack{i,j\in[n]\\ i\neq j}}
\int_0^t\dd u\;\left|\partial_{i,j} \varphi(\mathbf{Y}_u) D^{\ell_i,\ell_j}(\mathbf{X}_u)\right|
\end{multline*}
The first term on the right-hand side goes to zero from (\ref{eq:GbarGelltozero}) in Lemma \ref{lem:GGbarto0}. The second term goes to zero from the first part of the theorem. For the third, we use (\ref{eq:Dell1ell2}) in Proposition \ref{prop:supDeltaS}.
\end{proof}

\subsection{Convergence of the genetic variance\label{sec:phenotype}}
Here, we show Theorem \ref{thm:geneticvariance}, which states that the population trait variance converges as $L\to +\infty$ to the genetic variance.
\begin{theorem8}
    Set $\varepsilon_L:=\frac{1}{\sqrt{\rho r^{**}}}$ and define the genetic variance $\sigma_t^2:=4\EE[f_t(1-f_t)]$ where $(f_t)_{t\in[0,T]}$ is solution to (\ref{eq:McKean-limitW}). Under the assumptions of Theorem \ref{thm:strongcvgrho} we have
    $$\EE\left[\sup\limits_{t\in [\varepsilon_L,T]}\left|L\mathbf{Var}_{\mathbf{X}_t}[Z(g)] - \sigma_t^2\right|\right]
    \longrightarrow 0$$
\end{theorem8}
\begin{proof}
First, note that
$$
\mathbf{Var}_{\mathbf{X}_t}[Z(g)]
= \frac{1}{L^2} \sum\limits_{\ell_1,\ell_2 \in [L]} \mathbf{Cov}_{\mathbf{X}_t}[g^{\ell_1},g^{\ell_2}]
$$
    This implies
\begin{align*}
    \big|L\mathbf{Var}_{\mathbf{X}_t}[Z(g)]-&4\EE[f_t(1-f_t)]\big|\\
    \leq& 
    \left|\frac{1}{L}\sum\limits_{\ell\in [L]} 
    \mathbf{Var}_{\mathbf{X}_t}[g^\ell] \ - \ 4\EE[f_t(1-f_t)]
    \right|\ 
    +\  \frac{1}{L}\sum\limits_{\substack{\ell_1, \ell_2\in [L]\\\ell_1\neq\ell_2}}
            \left|\mathbf{Cov}_{\mathbf{X}_t}[g^{\ell_1},g^{\ell_2}]\right|\\ 
    \leq& 
    \left|\frac{1}{L}\sum\limits_{\ell\in [L]} 
    4p^\ell(\mathbf{X}_t)(1-p^\ell(\mathbf{X}_t)) \ - \ 4\EE[f_t(1-f_t)]
    \right| +
    \frac{1}{L}\sum\limits_{\substack{\ell_1, \ell_2\in [L]\\\ell_1\neq\ell_2}} \left|D^{\ell_1,\ell_2}(\mathbf{X}_t)\right|\\
    \leq& \;
    4\left|<\mu_{\mathbf{X}_t},\mbox{Id}(1-\mbox{Id})>  -  \EE[f_t(1-f_t)]
    \right|\ +
    \frac{1}{L}\sum\limits_{\substack{\ell_1, \ell_2\in [L]\\\ell_1\neq\ell_2}} \left|D^{\ell_1,\ell_2}(\mathbf{X}_t)\right|
\end{align*}
where in the second inequality we used that under $\mathbf{X}_t$, $g^\ell$ has law $2\mbox{Ber}(p^\ell(\mathbf{X}_t))-1$. Taking the expectation on both sides of the inequality,
the convergence of the first term on the right-hand side follows from Theorem \ref{thm:strongcvgrho}. For the second term we use (\ref{eq:supDell}) in Proposition \ref{prop:supDeltaS}.
\end{proof}

\section{Acknowledgements}
We thank Louis-Pierre Chaintron for fruitful discussions and helpful references. Special thanks to Elisa Couvert for being the needed interlocutor in times of despair. We thank Reinhard B\"urger for helpful reference. 

Thanks also go to Guy Alarcon whose teachings motivated Courau to study mathematics. 

For funding, Philibert Courau and Amaury Lambert thank the Center for Interdisciplinary Research in Biology (CIRB, Coll\`ege de France) and the Institute of Biology of ENS (IBENS, \'Ecole Normale Sup\'erieure, Universit\'e PSL).

E.S. gratefully acknowledge support from the FWF project PAT3816823.
\addcontentsline{toc}{section}{References}
{\setlength{\baselineskip}{.8\baselineskip}
\bibliographystyle{unsrt}
\bibliography{biblio.bib}
\par}

\appendix
\section{Appendix}
\subsection{The  \texorpdfstring{$L_1$}{TEXT} law of the iterated logarithm \label{sec:apLIL}}
\begin{thm}
There is a universal constant $C>0$ such that for any a continuous martingale $(M_t)_{t\in [0,T)}$ with $0\leq T\leq +\infty$ we have
$$
\EE\left[\sup\limits_{t\in [0,T)}\frac{M_t}{\sqrt{1+ \left(\braket{M}_t^{QV} \ln_{(2)}\left(\braket{M}_t^{QV}\right)\right)}}\right] \leq C
$$
\end{thm}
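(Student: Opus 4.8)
The plan is to strip all dependence on $M$ and on the horizon $T$ by a time change, reducing the claim to an assertion about a single standard Brownian motion, and then to bound the resulting supremum by a dyadic block decomposition together with a Gaussian tail estimate; since after the reduction nothing refers to $M$ or $T$, any constant obtained this way is automatically universal.

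First I would apply the Dambis--Dubins--Schwarz theorem: on a (possibly enlarged) probability space there is a standard Brownian motion $(\beta_u)_{u\ge 0}$ such that $M_t=\beta_{\braket{M}_t^{QV}}$ for every $t\in[0,T)$. Since $t\mapsto\braket{M}_t^{QV}$ is continuous, nondecreasing and vanishes at $0$, its range lies in $[0,\infty)$, so
\[
\sup_{t\in[0,T)}\frac{M_t}{\sqrt{1\vee\bigl(\braket{M}_t^{QV}\,\ln_{(2)}(\braket{M}_t^{QV})\bigr)}}\ \le\ \sup_{u\ge 0}\frac{|\beta_u|}{\sqrt{1\vee\bigl(u\,\ln_{(2)}u\bigr)}},
\]
and it suffices to bound the expectation of the right-hand side by an absolute constant.

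I would then split the supremum at the fixed level $u=e^{16}$. On $\{u\le e^{16}\}$ the denominator is $\ge 1$, so that part contributes at most $\EE[\sup_{u\le e^{16}}|\beta_u|]\le 2e^{8}$ by Doob's $L^2$ maximal inequality applied to the submartingale $|\beta|$. On $\{u> e^{16}\}$ one has $\ln_{(2)}u=\ln\ln u>2$, and I would decompose dyadically: for integers $k\ge 24$ (so that $2^k\ge e^{16}$), the bound $u\ln\ln u\ge 2^k\ln(k/2)$ on the block $[2^k,2^{k+1}]$ gives
\[
\sup_{u>e^{16}}\frac{|\beta_u|}{\sqrt{1\vee(u\ln_{(2)}u)}}\ \le\ \sup_{k\ge 24}Z_k,\qquad Z_k:=\frac{\sup_{u\le 2^{k+1}}|\beta_u|}{\sqrt{2^{k}\,\ln(k/2)}}.
\]
By Brownian scaling $\sup_{u\le 2^{k+1}}|\beta_u|\eqdist\sqrt{2^{k+1}}\,S$ with $S:=\sup_{u\le 1}|\beta_u|$, and the reflection principle plus a Chernoff bound give $\PP[S>x]\le 4e^{-x^2/2}$, whence $\PP[Z_k>\lambda]\le 4(k/2)^{-\lambda^2/4}$. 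For $\lambda\ge 3$ the exponent is $\ge 9/4>1$, so a one-line integration (using $\lambda^2-9\ge 3(\lambda-3)$) yields $\EE[(Z_k-3)^+]=\int_3^\infty\PP[Z_k>\lambda]\,d\lambda\le 4(k/2)^{-9/4}$, and therefore $\EE[\sup_{k\ge 24}Z_k]\le 3+\sum_{k\ge 24}\EE[(Z_k-3)^+]<\infty$, the value being absolute. Adding the two regimes gives the claimed universal $C$.

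I expect no genuine obstacle; the one thing to watch is bookkeeping — making sure that the cutoff $e^{16}$, the exponent $9/4$, and all tail constants are chosen once and for all and never secretly depend on $M$ or $T$, which is precisely what the Dambis--Dubins--Schwarz reduction guarantees. The mildly delicate (but entirely routine) step is the dyadic tail summation; the single structural input is the time-change reduction to Brownian motion.
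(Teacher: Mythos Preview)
Your argument is correct. Both you and the paper begin with the same structural reduction via Dambis--Dubins--Schwarz, replacing the general martingale by a standard Brownian motion $\beta$ and the horizon $[0,T)$ by $[0,\infty)$; this is what makes the constant universal. After that point the two proofs diverge. The paper invokes the global law of the iterated logarithm to see that $\sup_{u\ge 0}|\beta_u|/\sqrt{1\vee(u\ln_{(2)}u)}$ is almost surely finite, and then appeals to a black-box concentration result of Landau--Shepp for jointly Gaussian families (if a countable Gaussian supremum is a.s.\ finite then it is integrable) to conclude. Your route is more hands-on: you split at a fixed level, handle small times by Doob's $L^2$ inequality, and control large times by a dyadic block decomposition combined with the reflection principle and a Chernoff tail bound, obtaining a summable series. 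Your approach is entirely self-contained and in principle yields an explicit numerical value for $C$, whereas the paper's proof is shorter but relies on a less standard external result. A couple of bookkeeping points worth tightening in a final write-up: the interval $(e^{16},2^{24})$ falls in the block $k=23$ rather than $k=24$, and the inequality $\ln\ln u\ge\ln(k/2)$ on $[2^k,2^{k+1}]$ uses $\ln(\ln 2)\ge -\ln 2$, which is true but perhaps worth stating; neither affects the argument.
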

\begin{proof}
First, let us show that we can assume $M$ to be a Brownian motion and $T=+\infty$. Indeed, the Dambis, Dubins-Schwartz theorem (Theorem 1.6, chapter V of \cite{revuz2013continuous}) states that if we define $\tau_t := \braket{M}_t^{QV}$ and $\sigma_u:= \inf\{t\geq 0, \tau_t \geq u\}$, then $(M_{\sigma_u})_{0\leq u \leq \tau_T}$ is a Brownian motion on $[0,\tau_T]$, noted $(B_t)_{t\in [0,\tau_T]}$. Evidently
$$
\EE\left[\sup\limits_{t\in [0,T)}\frac{M_t}{\sqrt{1+ (\tau_t^{QV} \ln_{(2)}(\tau_t^{QV}))}}\right] = 
\EE\left[\sup\limits_{u\in [0,\tau_T)}\frac{B_u}{\sqrt{1+ (u \ln_{(2)}(\sqrt{u}}))}\right]
$$
Up to increasing the probability space, we may assume $B$ is well defined on all of $\RR_+$. Furthermore, as $u$ goes to infinity $\ln_{(2)}(\sqrt{u})$ and $\ln_{(2)}(u)$ are equivalent, so we can replace one by the other.

We know from the global law of the iterated logarithm (see \cite{levy1954mouvement}, p.13) that
$$
\PP\left[\limsup\limits_{u\to +\infty} \left|\frac{B_u}{\sqrt{2 u\ln_{(2)}(u)}}\right| = 1\right] = 1
$$

It follows that $\left(\frac{B_u}{\sqrt{1 + (2 u \ln_{(2)}(u)})}\right)_{t\geq 0}$ is a continuous Gaussian process, with mean 0 and asymptotically bounded by 1 as $u\to +\infty$. In particular its supremum (resp. infimum) is almost surely finite. To see that the supremum is of finite expectation, we use \cite{landau1970supremum}. In this paper, the authors show in (1.2) that for any sequence of (possibly correlated) jointly gaussian random variables $(X_n)_{n\in \NN}$, such that $\PP[\sup_n  |X_n| <+\infty] = 1$, we necessarily have $\EE[\sup_n |X_n|] <+\infty$ (even stronger, they show that we can find small enough $\varepsilon>0$ such that $\EE[e^{\varepsilon \sup_n |X_n|^2}]<+\infty$).  We can apply this to $\left(\frac{B_u}{\sqrt{1 + (2 u \ln_{(2)}(u)})}\right)_{u\in \QQ_+}$ and obtain the result.
\end{proof}

\end{document}